\let\preAccentstilde\tilde
\let\preAccentsbar\bar
    \let\tilde\preAccentstilde
    \let\bar\preAccentsbar
\theoremstyle{plain}
\newtheorem*{theorem*}{Theorem}
\newtheorem{theorem}{Theorem}[section]
\newtheorem{lemma}[theorem]{Lemma}
\newtheorem{cor}[theorem]{Corollary}
\newtheorem{prop}[theorem]{Proposition}
\theoremstyle{definition}
\newtheorem{definition}[theorem]{Definition}
\newtheorem{example}[theorem]{Example}
\theoremstyle{remark}
\newtheorem{rem}[theorem]{Remark}
\numberwithin{equation}{section}
\renewcommand{\Re}{\mathrm{Re}\,}
\renewcommand{\Im}{\mathrm{Im}\,}
\newcommand{\R}{\mathbb{ R}}
\newcommand{\C}{\mathbb{ C}}
\newcommand{\Z}{\mathbb{ Z}}
\renewcommand{\H}{\mathbb{ H}}
\renewcommand{\P}{\mathbb{ P}}
\newcommand{\HP}{\H\P}
\newcommand{\CP}{\C\P}
\newcommand{\phid}{(\frac \partial{\partial
    \lambda}\varphi^\lambda)|_{\lambda=\varrho}}
\newcommand{\varphid}{(\frac \partial{\partial
    \lambda}\varphi_1^\lambda)|_{\lambda=\varrho}
}
\newcommand{\trivial}[1]{\underline{\H}^{#1}}
\newcommand{\ttrivial}[1]{\underline{\widetilde\H}^{#1}}
\newcommand{\invers}{^{-1}}
\DeclareMathOperator{\End}{End}
\DeclareMathOperator{\Gl}{GL}
\DeclareMathOperator{\im}{im}
\DeclareMathOperator{\ImQ}{Im}
\DeclareMathOperator{\Span}{span}
\DeclareMathOperator{\Ad}{Ad}
\newlength{\dhatheight}
\newcommand{\doublehat}[1]{%
    \settoheight{\dhatheight}{\ensuremath{\hat{#1}}}%
    \addtolength{\dhatheight}{-0.3ex}%
    \hat{\vphantom{\rule{1pt}{\dhatheight}}%
    \smash{\hat{#1}}}}
\newcommand{\TitleWithUrl}[1]{\IfEmptyBibField{doi}%
  {\IfEmptyBibField{url}{\textit{#1}}%
    {\IfEmptyBibField{eprint}{\href {\BibField{url}}{\textit{#1}}}{\textit{#1}}}%
    }%
  {\href {https://doi.org/\BibField{doi}}{\textit{#1}}}}
\renewcommand{\eprint}[1]{\IfEmptyBibField{url}{\url{#1}}%
  {\href {\BibField{url}}{#1}}}
\begin{document}

\title
{Generalised Bianchi permutability for isothermic surfaces}

\author{Joseph Cho}
\address[Joseph Cho]{Institute of Discrete Mathematics and Geometry, TU Wien, Wiedner Hauptstrasse 8-10/104, 1040 Wien, Austria}
\email{jcho@geometrie.tuwien.ac.at}

\author{Katrin Leschke}
\address[Katrin Leschke]{Department of Mathematics,
  University of Leicester, University Road, Leicester LE1 7RH, United
  Kingdom}
\email{k.leschke@leicester.ac.uk }

\author{Yuta Ogata}
\address[Yuta Ogata]{Department of Integrated Arts and Science, National Institute of Technology, Okinawa College, 905 Henoko, Nago, Okinawa 905-2192, Japan}
\email{y.ogata@okinawa-ct.ac.jp}


\thanks{The authors would like to express their gratitude to the referee for valuable comments. The first author gratefully acknowledges the support from JSPS/FWF Bilateral Joint Project I3809-N32 ``Geometric shape generation" and JSPS Grants-in-Aid for JSPS Fellows 19J10679.
Second author gratefully acknowledges the support from Leverhulme Trust Network Grant IN-2016-019.
Third author gratefully acknowledges the support from Grants-in-Aid of The Uruma Fund for the Promotion of Science and JSPS Research Fellowships for Young Scientist 21K13799.}


\begin{abstract}
  Isothermic surfaces are surfaces which allow a conformal curvature
  line parametrisation. They form an integrable system, and Darboux
  transforms of isothermic surfaces obey Bianchi permutability: for
  two distinct spectral parameters the corresponding Darboux
  transforms have a common Darboux transform which can be computed
  algebraically. In this paper, we discuss two--step Darboux transforms
  with the same spectral parameter and show that these are obtained by
  a Sym--type construction: All two--step Darboux transforms of an
  isothermic surface are given, without further integration, by parallel
  sections of the associated family of the isothermic surface, either
  algebraically or by differentiation against the spectral parameter.
\end{abstract}

\maketitle

\section{Introduction}

First defined by Bour in \cite{bour_theorie_1862} as surfaces which admit conformal curvature lines, isothermic surfaces have enjoyed massive interest in the late 19th and early 20th century.
Darboux showed in \cite{darboux} that given an isothermic surface $f: M \to\R^3$ from a Riemann surface $M$ into the 3--sphere, one can construct a second isothermic surface via a Ribaucour sphere congruence that depends on a spectral parameter, a transformation which we refer to as \emph{Darboux transformation}. 

Then Bianchi, \cite{bianchi_ricerche_1905}, showed that Darboux transformations admit permutability: starting from an isothermic surface $f$ and constructing two Darboux transforms $f_1$ and $f_2$ using spectral parameters $\varrho_1$ and $\varrho_2$, respectively, one can always find a fourth surface $f_{12}$ that is both a Darboux transform of $f_1$ and $f_2$ with respect to spectral parameters $\varrho_2$ and $\varrho_1$.
Demoulin further showed in \cite{demoulin_sur_1910} that these four surfaces in the permutability enjoy a relationship characterized by cross--ratios:
	\begin{equation}\label{eqn:crDarboux}
		\mathrm{cr}(f, f_1, f_{12}, f_2) = \frac{\varrho_2}{\varrho_1}.
	\end{equation}

        Generally, one needs integration to find Darboux transforms of
        a given isothermic surface; however, the cross--ratio equation
        \eqref{eqn:crDarboux} coming from permutability enables one to
        find successive Darboux transforms \emph{algebraically} after
        an initial integration.  The cross--ratio equation
        \eqref{eqn:crDarboux} shows that  the fourth surface $f_{12}$
        is identical to the given starting surface $f$ if the spectral
        parameters are equal.  Therefore, permutability gives
        algebraic methods to find non-trivial successive Darboux
        transforms as long as the spectral parameters are
        \emph{pairwise distinct}. 

Note however that one can always integrate twice to find non--trivial two--step Darboux transforms: the condition in Bianchi permutability that the spectral parameters need to be distinct is only essential to obtain non--trivial successive Darboux transforms \emph{algebraically}.

The aim of this  paper is to eliminate the assumption in Bianchi permutability and obtain \emph{all} successive Darboux transforms without further integration, even in the case when the spectral parameters are equal. Rather than using Bianchi permutability we obtain two--step Darboux transforms with the same spectral parameter by a Sym--type method, \cite{sym_soliton_1985}, that is, by differentiation with respect to the spectral parameter.

The existence of spectral parameters, transformations, and permutability suggested that the class of isothermic surfaces constitutes an integrable system, an approach taken in \cite{cieslinski1995isothermic} which renewed modern interest in isothemic surfaces.  Various characterisations of Darboux transformations have been obtained since: Darboux transformation can be described in terms of a Riccati type equation \cite{darboux_isothermic}; Darboux pairs of isothermic surfaces can be viewed as a curved flat using the Minkowski model \cite{bjpp} or using the quaternionic model \cite{hertrich-jeromin_supplement_1997} of conformal geometry.  In fact, isothermic surfaces can be characterised via the existence of a closed $1$--form or, equivalently, a one parameter family of flat connections \cite{ferus_curved_1996, KamPedPin, burstall_conformal_2010}, and one can view Darboux transformations as the parallel sections of the flat connections \cite{hertrich-jeromin_mobius_2001, udo_habil}.
In addition, many of the aforementioned works have investigated the various transformations of isothermic surfaces and their relationships: for example, the $T$-transforms, also known as Calapso transforms, can be obtained algebraically from the Darboux transforms, while the Christoffel dual can be obtained via a Sym-type formula from either the $T$-transforms or the Darboux transforms.

 In this paper, we use the quaternionic model and describe Darboux transform by parallel sections of the associated family of flat connections of the isothermic surface. A short review of isothermic surfaces, the associated family $d_\lambda$ and Bianchi permutability in this setting is given in Section \ref{sec:background} to setup the notations and tools for our main result.

Then we tackle the problem to eliminate the need for a second integration for finding two--step Darboux transforms in Section \ref{sec:generalisedBianchi}. For this, we use the fact that Darboux transforms of isothermic surfaces are indeed given by a simple factor dressing. In particular, the associated family of flat connections $d^1_\lambda$ of a Darboux transform $f_1$ with spectral parameter $\varrho$ is given by an explicit gauge $r_\lambda$, which depends smoothly on the spectral parameter and has a simple pole at $\varrho$, of the associated family $d_\lambda$ of $f$.  Although the gauge has a pole, the family $d^1_\lambda=r_\lambda\cdot d_\lambda$ extends into $\varrho$ and we give an explicit form of the associated family. 

With this at hand, we obtain the parallel sections $\varphi_1 =r_\lambda(\varphi)$ of $f_1$ by applying the gauge matrix to parallel sections $\varphi$ given by the isothermic surface $f$, for spectral parameter away from the pole $\varrho$ of $r_\lambda$.  This way, we recover the parallel sections used for Bianchi permutability, the \emph{Bianchi--type parallel sections}, explicitly as projections of parallel sections $\varphi$. In the case when the spectral parameter coincide, there is a quaternionic one--dimensional space  arising from this construction: to obtain further parallel sections we have to consider limits of parallel sections for spectral parameter $\lambda$ when $\lambda$ tends to the pole $\varrho$.  We show that these limits, the \emph{Sym--type parallel sections}, are given by differentiation of a family of $d_\lambda$--parallel sections with respect to the spectral parameter.

Indeed, we can conclude that all parallel sections of the associated family of a Darboux transform are either Bianchi-- or Sym--type. In particular, we obtain all non--trivial two--step Darboux transforms with same spectral parameter without need for a second integration, a principle we call \emph{generalised Bianchi permutability}.

Given an isothermic surface $f: M\to\R^3$, the Darboux transformation is initially a local construction: the used parallel sections exist globally only on the universal cover of the Riemann surface $M$. Since all two--step parallel sections are given algebraically or by a Sym--type method, we discuss closing conditions for one-- and two--step Darboux transforms by investigating the holonomy of the family of flat connections $d_\lambda$ of $f$ only.

We conclude the paper by demonstrating our construction in the explicit example of the round cylinder. In particular, we give explicit formulae for all parallel sections and obtain a complete description of the set of all closed Darboux transforms of a cylinder. Depending on the spectral parameter, four cases can occur: there is exactly one closed Darboux transform, which is the cylinder, there are two distinct Darboux transforms, which are again cylinders, there is a $\CP^1$--worth of Darboux transforms which are rotation surfaces, or there is a $\HP^1$--worth of (possibly singular) Darboux transforms which are rotation surfaces or isothermic bubbletons. We then use the parallel sections to give explicit formulae for Sym--type Darboux transforms, including two--step bubbletons.\footnote{The figures in this paper were drawn using the software \textit{Mathematica}.}

Since the main ingredients for our construction are the associated family and the simple factor dressing, we expect our results to be templates for similar results for other surface classes allowing simple factor dressing, such as CMC surfaces in space forms, and completely integrable differential equations. This should allow to construct new surfaces and, more generally, new solutions to differential equations given by complete integrability.

 \section{Background}
\label{sec:background}

In this section we will give a short summary of results and methods
used in this paper. For details on the quaternionic formalism and
isothermic surfaces we refer to  \cite{coimbra, klassiker,
  udo_habil, fran_epos, darboux_isothermic}.

\subsection{Conformal immersions and quaternions}

In this paper we will identify 4--space by the quaternions $\R^4=\H$,
and 3--space by the imaginary quaternions $\R^3=\ImQ\H$ where
$\H =\Span_\R\{1, i, j, k\}$ and $i^2=j^2=k^2=ijk=-1$. For imaginary
quaternions the product in the quaternions links to the inner product
$\langle\cdot, \cdot\rangle$ and the cross product in $\R^3$ by
\[
ab =-\langle a,b\rangle +a\times b, \quad a, b\in\ImQ\H\,.
\]
Here we identify $\H =\Re \H \oplus \ImQ\H =\R\oplus \R^3$. In
particular, we see
\[
S^2 =\{ n\in\ImQ\H \mid n^2=-1\}\,.
\]
Thus, if $f:  M \to\R^3$ is an immersion then its Gauss map $N: M \to
S^2$ is a complex structure $N^2=-1$ on $\R^4=\H$. Moreover, if $(M, J_{TM})$ is a
Riemann surface, then $f: M \to \R^3$ is conformal if and only if
\[
*df  = N df = -df N\,,
\]
where $*$ denotes the negative Hodge star operator, that is,
$*\omega(X) = \omega(J_{TM}X)$ for $X\in TM$, $\omega\in\Omega^1(M)$.
More generally, if  $f: M \to\R^4$ is a conformal immersion from a
Riemann surface into 4--space, the Gauss map is given by
a pair of complex structures 
\[
(N, R): M \to S^2\times S^2 =\mathrm{Gr}_2(\R^4)
\]
such that 
\[
*df = Ndf = -df R\,.
\]

Note that $N=R$ in the case when $f$ is a surface in 3--space.

Since the theory of isothermic surfaces is conformal, it is useful to
also consider conformal immersions into the 4--sphere by identifying
$S^4=\HP^1$. Then a map
$f: M\to S^4=\HP^1$ can be identified with a line subbundle
$L\subset \trivial 2 =M\times\H^2$ of the trivial $\H^2$--bundle over
$M$ via
\[
f(p) = L_p\,.
\]
Therefore, the group of oriented M\"obius transformations is in this
setup given by $\Gl(2,\H)$.  The derivative of $L$ is given by
$\delta= \pi_L d$ where $\pi_L: \trivial 2 \to \trivial 2/L$ denotes
the canonical projection. Then an immersion $f$ is conformal if and
only if there are complex structures $J_L$ on $L$ and $J_{V/L}$ on
$\trivial 2/L$
such that  
\[
*\delta = J_{V/L} \delta = \delta J\,.
\]

In particular, if $f: M\to\R^k$, $k=3,4$,  is an immersion from a Riemann surface
into 3-- or 4--space
we will consider $f$ as a map into the 4--sphere by setting
\[
L  = \psi\H, \quad \psi= \begin{pmatrix} f \\ 1
\end{pmatrix}\,.
\]
We will identify $e\H =\trivial {2}/L$ via the
isomorphism $\pi_L|_{e\H}: e\H \to \trivial{2}/L$ where
$e\H =\infty$ is the point at infinity with 
\[e=\begin{pmatrix} 1\\ 0
\end{pmatrix}\,.
\]
Then $N, R: M \to S^2$ induce the complex structures $J_L$ on the line
bundles $L$ and $J_{V/L}$ on
$\trivial 2/L$
by setting $J_L\psi =-\psi R$ and $J_{V/L} e = e N$: since $\delta \psi = e
df$ we obtain indeed 
\[
*\delta \psi = J_{V/L}\delta \psi= \delta J_L \psi\,.
\]

\subsection{Isothermic surfaces and Darboux transforms}

Classically, an isothermic surface is considered as a surface in 3--space which
allows a conformal curvature line parametrisation (away from umbilic
points). 
In our setting,  it is convenient to view  an isothermic surface as  
a quaternionic line
bundle with an associated closed 1--form (\cite[Theorem 2.3]{fran_epos}, \cite[\S 5.3.19]{udo_habil}, \cite[Definition 3.1]{KamPedPin}):
\begin{definition}
A conformal immersion $f: M \to S^4$ is called \emph{isothermic} if
there exists a non--trivial closed 1--form
$\eta\in\Omega^1(\End(\H^2))$, the \emph{retraction form}, such that
\[
\Im \eta \subset L \subset\ker \eta\,. 
\]
\end{definition}
\begin{rem}
\label{rem:isothermic 4 space}
This definition immediately shows that the notion of isothermicity is
conformally invariant, that is, if $f: M \to S^4$ is isothermic so
are its M\"obius transforms: given the line bundle $L$ corresponding
to $f$, $\tilde L =AL$ for $A\in \Gl(2,\H)$ is isothermic with 1--form
$\tilde \eta = A\eta A\invers$.
\end{rem}

This definition links with the Christoffel transformation of an
isothermic surface when $f$ is a surface in 3-- or 4--space: since
$\im\eta\subset L \subset \ker \eta$ we can write
\begin{equation}
\label{eq:eta coord}
\eta =\begin{pmatrix}  f\omega & -f\omega f\\ \omega & -\omega f
\end{pmatrix} 
\end{equation}
for a 1--form $\omega$ with values in $\H$. But then $d\eta= 0$ shows
that $d\omega =0$, so locally there exists a (possibly branched)
immersion $f^d$ with $df^d=\omega$. Additionally we see from $df
\wedge\omega=\omega\wedge df=  0$ that $f^d$ is conformal with Gauss
map $(N^d, R^d)= (-R, -N)$: $f^d$ is indeed a \emph{Christoffel
  transform} or \emph{dual surface} of
$f$. If $z=x+i y$ is an isothermic coordinate (and $f$ does not map into
the round sphere) then up to scaling $df^d = f_x\invers dx -
f_y\invers dy$. Conversely, away from umbilics the isothermic
coordinate can be constructed from $\eta$
 (see \cite[p.\ 28]{fran_epos}).

In particular, the definition we are using immediately allows to introduce a \emph{spectral parameter}
$\varrho\in\R$, see  e.g. \cite[Theorem 15.4]{burstall_conformal_2010},
  \cite[Proposition 3.6]{burstall_isothermic_2011},  and we obtain an \emph{associated family of flat connections}: since
$d_\lambda=d+ \lambda\eta, \lambda \in\R$, has curvature
\[
R_\lambda=  R+ \lambda d\eta + \lambda^2\eta\wedge \eta =0\]
we see that the associated family $d_\lambda$ of $f$ is flat for all
$\lambda\in\R$. The converse holds as well:

\begin{theorem}
\label{thm: family}
 If $\eta\in \Omega^1(\End(\trivial 2))$ is non--trivial with
$\eta^2=0$ and 
\[
d_\lambda = d+ \lambda\eta
\]
is flat for all $\lambda \in\R$ then $\ker \eta$ can be extended to a
quaternionic line bundle $L$ and $L$ is isothermic with retraction
form $\eta$.
\end{theorem}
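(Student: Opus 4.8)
The plan is to reverse-engineer the structure of a retraction form from the flatness hypothesis. The curvature of $d_\lambda = d + \lambda\eta$ is $R_\lambda = R + \lambda\, d\eta + \lambda^2\, \eta\wedge\eta$, and since $d$ is the trivial connection, $R=0$; the hypothesis that $R_\lambda=0$ for \emph{all} $\lambda\in\R$ then forces, by comparing powers of $\lambda$, both $d\eta = 0$ and $\eta\wedge\eta = 0$. (The latter is not quite the same as the pointwise condition $\eta^2=0$ we are given, but together they are what we need.) So the real content is: given a closed $\End(\underline{\H}^2)$-valued $1$-form $\eta$ with $\eta^2=0$ and $\eta\wedge\eta=0$, construct the line bundle $L$ with $\im\eta\subset L\subset\ker\eta$.

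First I would analyse the pointwise algebra. At a point $p$ where $\eta_p\neq 0$, pick a nonzero tangent vector $X$ with $\alpha:=\eta_p(X)\neq 0$; then $\alpha\in\End(\H^2)$ satisfies $\alpha^2=0$ (from $\eta^2=0$ evaluated on $X$), so $\alpha$ has rank exactly one and $\im\alpha\subset\ker\alpha$ is a quaternionic line. For a second tangent vector $Y$ write $\beta:=\eta_p(Y)$; the conditions $\eta^2=0$ and $\eta\wedge\eta=0$ give $\alpha\beta+\beta\alpha=0$ and (since in real dimension $2$, $\eta\wedge\eta$ is determined by $\alpha\beta-\beta\alpha$) also $\alpha\beta=\beta\alpha$, hence $\alpha\beta=\beta\alpha=0$. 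From $\alpha\beta=0$ we get $\im\beta\subset\ker\alpha=\im\alpha$, and from $\beta\alpha=0$ we get $\im\alpha\subset\ker\beta$. Thus $\im\eta_p$ and $\ker\eta_p$ do not depend on which tangent vector we evaluate on, and we obtain a well-defined line $L_p:=\im\eta_p$ with $\im\eta_p\subset L_p\subset\ker\eta_p$ at every point where $\eta$ does not vanish. Smoothness of $L$ on this open set is clear since locally $L$ is the image of a rank-one bundle map of constant rank.

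The main obstacle is extending $L$ across the zero set $Z=\{p: \eta_p=0\}$ of $\eta$ to a \emph{smooth} line bundle on all of $M$, and showing $\im\eta\subset L\subset\ker\eta$ persists there (which on $Z$ is vacuous, so the real issue is just the smooth extension as a subbundle). Here I would use that $\eta$ is closed: writing $\eta$ in terms of a local potential, or better, using \eqref{eq:eta coord}-type normal forms, one sees that in a suitable trivialisation $\eta = \psi\,\omega\,(\ldots)$ with $\omega$ a closed (hence locally exact) scalar $1$-form and $\psi$ a smooth frame; the zeros of $\eta$ are then the zeros of $\omega$, which — $\omega$ being closed and $M$ a Riemann surface — are isolated (or $\omega\equiv0$, excluded by non-triviality). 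On a punctured disc around an isolated zero, $L$ is a smooth line subbundle of the trivial bundle; since holomorphic-type line subbundles of a trivial bundle over a punctured disc extend across the puncture (the Kodaira-type / removable-singularity argument, using that the Gauss map data $(N,R)$ or the frame $\psi$ are bounded and smooth), $L$ extends smoothly over $Z$. Finally, the verification that $L$ is then isothermic with retraction form $\eta$ is immediate from the inclusions $\im\eta\subset L\subset\ker\eta$ together with the already-established $d\eta=0$, matching the definition. I expect the isolated-zeros argument and the subbundle extension to be the only genuinely delicate points; everything else is the linear algebra of rank-one nilpotents over $\H$.
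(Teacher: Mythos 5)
Your reduction of flatness to $d\eta=0$ and $\eta\wedge\eta=0$, and the pointwise rank--one nilpotent algebra showing that $L_p=\im\eta_p=\ker\eta_p$ is a well--defined quaternionic line independent of the tangent vector wherever $\eta_p\neq0$, are both correct and match the easy part of the argument. The gap is exactly at the point you flag as delicate: the extension of $L$ across the zero set of $\eta$. Your proposed resolution does not work for three reasons. First, the normal form \eqref{eq:eta coord} with $\eta$ written in terms of a frame $\psi$ of $L$ and a scalar--valued $1$--form $\omega$ presupposes that the line bundle $L$ (equivalently, the surface $f$) is already defined at the point in question; invoking it to construct $L$ near a zero of $\eta$ is circular. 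Second, the assertion that a closed $1$--form on a Riemann surface has isolated zeros is false: $dg$ for a smooth function $g$ constant on an open set is closed and vanishes on that open set. Isolated zeros come from holomorphicity (ellipticity), not from closedness. Third, a merely smooth line subbundle of a trivial bundle over a punctured disc need not extend across the puncture (the image of a smooth matrix--valued map vanishing only at the origin can fail to have a limiting direction), so the ``removable--singularity'' step also needs a holomorphic structure that you have not produced.

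The paper supplies precisely this missing structure: let $I$ be the complex structure on $\H^2$ given by right multiplication by $i$, decompose $\eta=\eta^{1,0}+\eta^{0,1}$ with respect to $I$ and the Riemann surface structure, and observe that $d\eta=0$ forces $d\eta^{1,0}=0$, which says that $\eta^{1,0}$ is a \emph{holomorphic} section of $K\End(\underline{\C^4})$ for the holomorphic structure induced by $d$ (this is the content of \cite[Theorem 3.1]{simple_factor_dressing}). Holomorphicity then gives both missing ingredients at once: the zeros of $\eta^{1,0}$ are isolated, and $E=\ker\eta^{1,0}$ extends holomorphically across them (divide out the vanishing order), whence $\ker\eta=E\oplus Ej$ extends to the required quaternionic line bundle $L$. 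If you replace your closedness/normal--form argument by this type decomposition and holomorphicity, the rest of your proof goes through.
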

\begin{proof} We follow the arguments in \cite[Theorem
  3.1]{simple_factor_dressing}, and only give a short outline how the
  argument there can be adapted to our situation.  Let $I$ be the
  complex structure on $\H^2$ which is given by right multiplication
  by the quaternion $i$. Let $\eta^{1,0}$ be the $(1,0)$--part of
  $\eta$ and $E =\ker\eta^{1,0}$. Since $\eta$ is quaternionic,
  $\ker\eta = E \oplus E j$. In \cite[Theorem
  3.1]{simple_factor_dressing} it is shown that $d$ induces a
  holomorphic structure on $\Gamma(K \End(\underline{\C^4}))$ when
  identifying sections in $\Gamma(\bar K K)$ with 2--rforms in
  $\Omega^2(M)$. Since $d_\lambda$ is flat we see that $d\eta =0$, so
  that also $d\eta^{1,0}=0$. Thus, $\eta^{1,0}$ is holomorphic and
  $E=\ker \eta^{1,0}$ extends holomorphically across the zeros of
  $\eta^{1,0}$, and so does $\ker \eta = E\H$.
\end{proof}

 Recall that an isothermic surface $f: M \to\R^3$ can be locally
 characterised as a surface which allows a sphere congruence that
 conformally envelops $f$ and a second surface $\hat f$ where
 $f(p) \not= \hat f(p)$ for all $p$. Then $\hat f$ is called a
 \emph{Darboux transform} of $f$.  

 In the framework we set up, the \emph{Darboux transformation} can be
 formulated in terms of parallel sections of $\ttrivial 2$ of the
 associated family of flat connections, see e.g.\ \cite[\S
 5.4.8]{udo_habil}.  Here $\ttrivial 2$ denotes the trivial $\H^2$
 bundle $\ttrivial 2 = \tilde M \times \H^2$ over the universal cover
 $\tilde M$ of $M$.  In this situation, the resulting Darboux
 transform is in general an isothermic surface in the 4--sphere and is
 defined on the universal cover of $M$, and is a surface in
  the  3--sphere only for suitable initial conditions.  We
 will identify, in abuse of notation, a surface $f: M \to S^4$ with
 the canonical lift $f: \tilde M \to S^4$.

\begin{definition}
  Let $f: M \to S^4$  be isothermic. Then
  $\hat f: \tilde M \to S^4$ is called a \emph{Darboux transform} of
  $f$ with respect to the parameter $\varrho\in\R_*=\R\setminus\{0\}$ if
  $\hat L = \varphi^\varrho \H$, where
  $\varphi^\varrho\in\Gamma(\ttrivial 2)$ is a $d_\varrho$--parallel
  section,  and $L(p)\not = \hat L(p)$ for all
  $p\in \tilde M$.
\end{definition}

\begin{rem} In the case when the assumption $L(p)\not=\hat L(p)$ is
  not satisfied for all $p\in M$, the surface $\hat f$ is called a
  \emph{singular} Darboux transform of $f$, see
  \cite{conformal_tori}. If $f, \hat f: M \to\R^3$ are surfaces in
  3--space this means that the enveloping sphere congruence degenerates
  to a point for $p\in M$ with $\hat f(p) = f(p)$ and $\hat f$ becomes
  a branched conformal immersion.
\end{rem}
To simplify notations we will abbreviate $\varphi=\varphi^\varrho$ if
it is clear from the context that $\varphi$ is a $d_\varrho$--parallel
section, and use the superscript only if we want to emphasise the
parameter in the family of flat connections that we use. Similarly, we
will call the associated surface a Darboux transform, and only refer
to it as $\varrho$--Darboux transform or Darboux transform with
respect to the parameter $\varrho$ for emphasis of a specific spectral
parameter.

We now investigate the closing conditions for Darboux transforms, see
\cite{conformal_tori}. Let us recall the notion of \emph{sections with
  multiplier}.
\begin{definition}
  Given a parallel section $\varphi\in\Gamma(\ttrivial 2)$
a \emph{multiplier} is a group homomorphism $h: \pi_1(M) \to \H_*$
such that 
\[
  \gamma^*\varphi = \varphi \circ \gamma_\sharp= \varphi h_\gamma,
  \quad \text{ for all } \quad \gamma\in\pi_1(M)
\]
where $\gamma_\sharp$ is the deck transformation of $\tilde M$
associated to $\gamma$. A \emph{section with multiplier} is a parallel
section for which multipliers exist. A spectral parameter
$\varrho\in\R_*$ is called a \emph{resonance point}
if every $d_\varrho$--parallel section is a section with multiplier.
\end{definition}

Since a Darboux transform of an isothermic
surface $f: M \to S^4$ is given by $\hat f =\varphi\H$ where
$\varphi=\varphi^
\varrho$
is a parallel section of $d_\varrho$ for some $\varrho\in\R_*$, we see
that $\hat f$ is closed if and only if $\varphi$ is a \emph{section
  with multiplier}.
In this paper, we consider the ``closure condition'' to mean that the Darboux
transform is defined on the same Riemann surface of the original immersion.

Since for $h\in\H_*$ there exists $m\in\H_*$ with
$m\invers h m\in \C_*$ we can assume without loss of generality that
$h_\gamma\in \C_*$ by changing $\varphi$ to $\varphi m$ in case of an
abelian fundamental group.  Note that since $d_\varrho$ is
quaternionic, we see that if $\varphi$ is $d_\varrho$--parallel with
multiplier $h$ then $\varphi j$ is $d_\varrho$--parallel with
multiplier $\bar h$, so that multipliers come in pairs $(h, \bar h)$
which give both rise to the same surface $\hat f$. In particular, in the case when
$h$ is real, the corresponding space of parallel sections with
multiplier $h$ is at least quaternionic 1--dimensional, whereas in the
case of $h\not\in\R$, the space of parallel sections with multiplier
$h$  is not
quaternionic. 
 
\begin{example}
In the case of a surface of
revolution $f: M \to \R^3$,  the holonomy of
$d_\varrho$ is for all spectral parameter
$\varrho\in\R\setminus\{0, \varrho_0\}$ diagonalisable and has at
most two distinct multipliers, $h$ and $h\invers$, see
\cite{isothermic_paper} and  Proposition
\ref{prop:parallel} in the case of a round cylinder. The spectral parameter
$\varrho_0\in\R_*$ is determined by the choice of dual surface:
scaling of $f^d$ by some factor will result in a scale of
$\varrho_0$. In the case when $f(x,y)= ip(x)+ j q(x)
e^{-iy}$ with smooth real--valued functions $p, q$ satisfying
$p'^2+q'^2=q^2$ is a conformally
parameterised surface of revolution in the conformal coordinate
$z=x+iy$ and $df^d = f\invers_xdx-f\invers_y dy$ we
have $\varrho_0 = -\frac 14$. 

With such choices, for the unique spectral parameter
$\varrho=-\frac 14$ with non-diagonalisable holonomy there is exactly
one parallel section with multiplier $h$ (up to quaternionic scaling),
which indeed is $h=-1$, and the corresponding Darboux transform is a
rotation of $f$,  see Theorem \ref{thm: sor}  in
the case when $f$ is a round cylinder and Remark \ref{rem:gensor} for the
general case.

\[
\diagram
f 
\ar@{->}[rr]^{\quad{\varphi^{-\frac 14}}}&&\hat f 
\enddiagram
\]

For $\varrho<-\frac{1}4$ there are exactly two distinct real
multipliers $h, h\invers\in\R$, and two $\H$--linearly independent
$d_\varrho$--parallel sections $\varphi_1^\varrho, \varphi_2^\varrho$
with multiplier $h$ and $h\invers$ respectively. These give two
distinct  Darboux transforms of $f$ which are both rotations of $f$.     Since
$\varphi_1^\varrho j, \varphi_2^\varrho j$ have the same real
multipliers as $\varphi_1^\varrho$ and $\varphi_2^\varrho$
respectively, there are no further Darboux
transforms,  see Theorem \ref{thm: sor}   and Remark \ref{rem:gensor}.

\[
\diagram
 & & f_1 \\
f\urrto^{{\varphi_1^{\varrho}}}\drrto_{\varphi_2^{ \varrho}}\\
 & &f_2 
\enddiagram
\]

For $\varrho>-\frac{1}4, \varrho\not=\frac{k^2-1}4, k\in\Z, k\ge 1,$
there are exactly two complex multipliers
$h\in S^1\setminus\{\pm 1\}$, and two $\H$--linearly independent
$d_\varrho$--parallel sections $\varphi_1^\varrho, \varphi_2^\varrho$
with multiplier $h$. Since any complex linear combination
$\varphi^\varphi = \varphi_1^\varrho m_1 + \varphi_2^\varrho m_2$,
$m_1, m_2\in\C$, is a $d_\varrho$--parallel section with multiplier
$h$, we obtain a $\CP^1$ family of closed (possibly singular) Darboux
transforms, giving in case of the round cylinder general rotation
surfaces, see Theorem \ref{thm: sor}   and Remark \ref{rem:gensor}.  Since $\varphi^\varrho j$ has
multiplier $\bar h = h\invers$ and
$\varphi^\varrho\H = \varphi^\varrho j\H$ we obtain no further Darboux
transforms in this case.

\[
\diagram
 & & f_1 \ar@{--}[d]  \\
f\urrto^{{\varphi_1^{\varrho}}}\drrto_{\varphi_2^{ \varrho_r}}
\ar@{->}[rr]_{\quad{\varphi^{\varrho}}, m_i\in\C}&&\hat f \ar@{--}[d]   \\
 & &f_2 
\enddiagram
\]

\begin{figure}
	\centering
	\begin{minipage}{0.5\textwidth}
		\centering
		\includegraphics[width=\linewidth]{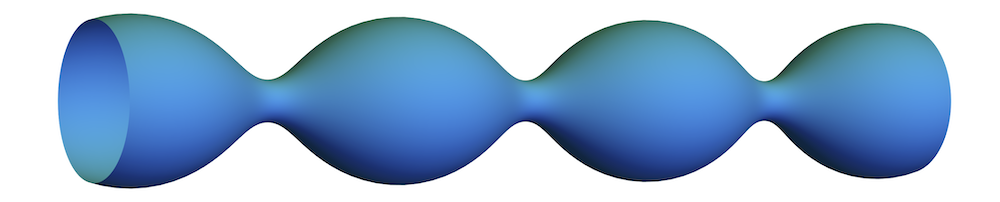}
	\end{minipage}
	\begin{minipage}{0.48\textwidth}
		\centering
		\includegraphics[width=\linewidth]{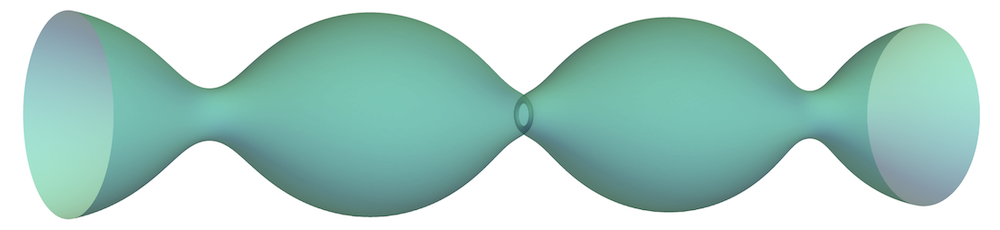}
	\end{minipage}
	\caption{Darboux transforms of an unduloid for
          $\varrho\in\R_*, \varrho\not=\frac{k^2-1}4, k>1, k\in\Z,$
          are rotation surfaces.}
\label{fig:sor non-resonance}
\end{figure}

In the case of a surface of revolution, the only other case which can occur
is that the spectral parameter is a resonance point:   every
$d_{\varrho_r}$--parallel section $\varphi^{\varrho_r}$ is a section
with multiplier, that is, every Darboux transform with parameter
$\varrho_r$ is a closed Darboux transform.  

Put differently, given a basis $\{\varphi_1^{\varrho_r}, \varphi_2^{\varrho_r}\}$ of
$d_{\varrho_r}$--parallel sections at a resonance point $\varrho_r$
every $d_{\varrho_r}$--parallel section, and thus, every (possibly singular)
$\varrho_r$--Darboux transform,  is given by $\varphi^{\varrho_r} = \varphi_1^{
\varrho_r} m_1 + \varphi_2^{\varrho_r} m_2$, $m_1, m_2\in\H$:
\[
\diagram
 & & f_1 \ar@{==}[d]  \\
f\urrto^{{\varphi_1^{\varrho_r}}}\drrto_{\varphi_2^{ \varrho_r}}
\ar@{->}[rr]_{\quad{\varphi^{\varrho_r}},m_i\in\H}&&\hat f \ar@{==}[d]   \\
 & &f_2 
\enddiagram
\]

Note that this shows that all $d_\varrho$--parallel sections at a
resonance point $\varrho\in\R_*$ have the
same multiplier $h$, and since multipliers appear as pairs $(h, \bar
h)$ we also see that $h\in\R$.

The corresponding Darboux transforms in case of a surface of revolution are
rotation surfaces or isothermic bubbletons: in this case
resonance points $\varrho_r = \frac{k^2-1}4$ are parametrised by
positive integers $k\in\Z, k>1,$ such that the
corresponding Darboux transforms have $k$ lobes. Special initial
conditions give, in the case of a Delaunay surface, again Delaunay
surfaces and CMC bubbletons, see Proposition \ref{prop:parallel} for
the case of a round cylinder.

\begin{figure}[H]
  	\centering
  	\begin{minipage}{0.4\textwidth}
		\centering
		\includegraphics[width=\linewidth]{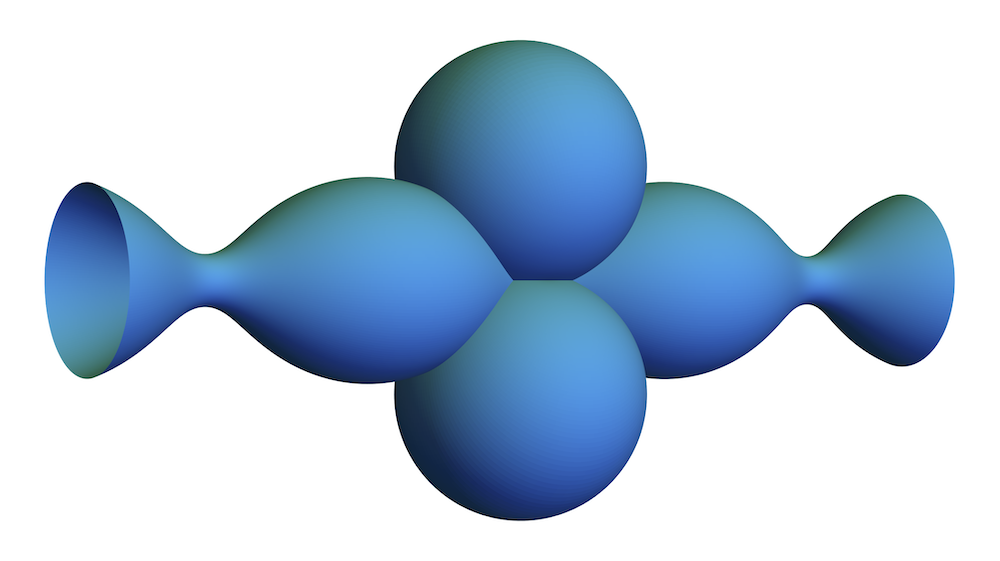}
	\end{minipage}
  	\begin{minipage}{0.4\textwidth}
		\centering
		\includegraphics[width=\linewidth]{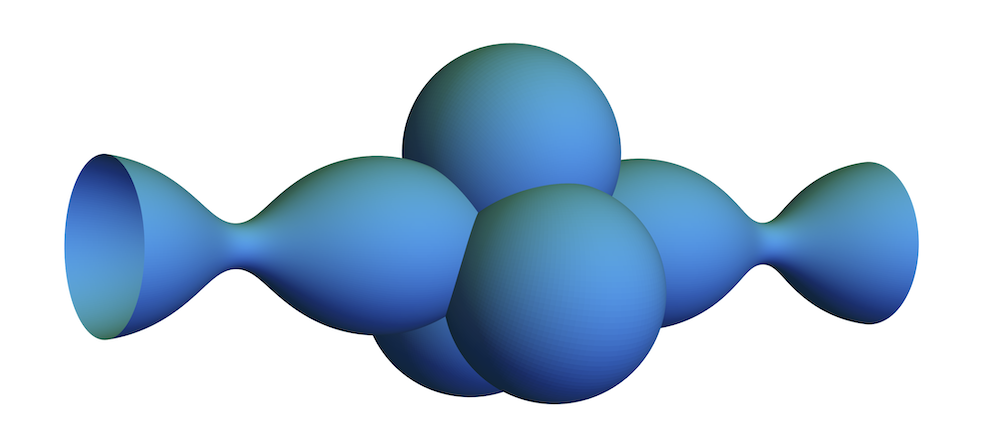}
	\end{minipage}
  	\begin{minipage}{0.32\textwidth}
		\centering
		\includegraphics[width=\linewidth]{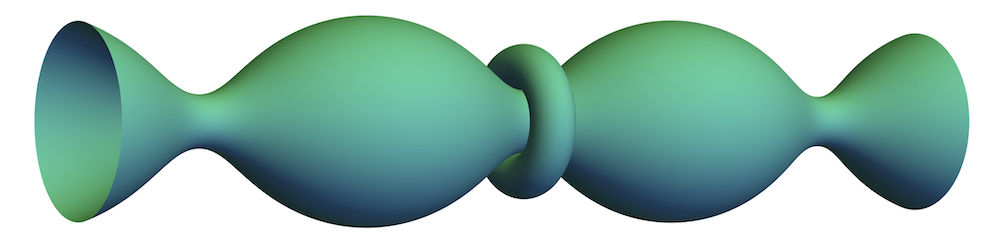}
	\end{minipage}
  	\begin{minipage}{0.32\textwidth}
		\centering
		\includegraphics[width=\linewidth]{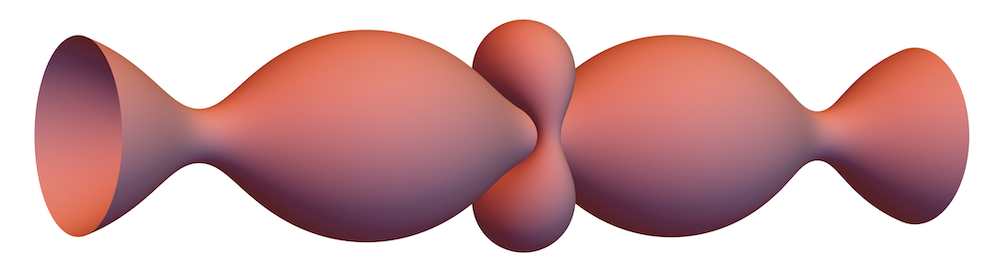}
	\end{minipage}
  	\begin{minipage}{0.32\textwidth}
		\centering
		\includegraphics[width=\linewidth]{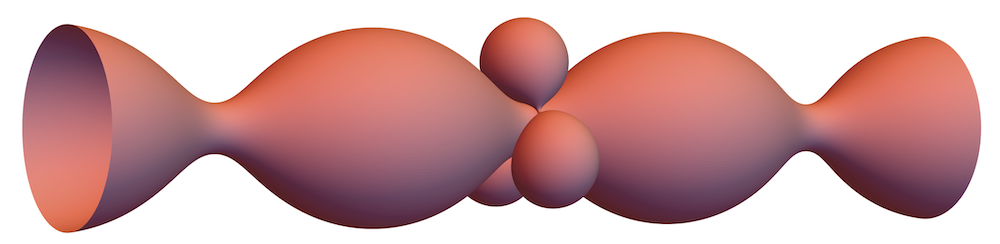}
 	\end{minipage}
        \caption{Darboux transforms in 3--space of an unduloid at a
          resonance point for $k=2,3$ are  unduloids, CMC bubbletons,
          surfaces of revolution or isothermic bubbletons.}
\label{fig:sor bubbletons}
\end{figure} 
\end{example}

Given two Darboux
transforms $f_1, f_2$ of $f$ with respect to parameter $\varrho_1,
\varrho_2\in\R$,  there is a common Darboux
transform of both $f_1, f_2$ which can be computed
from the  parallel sections 
without further integration.
\begin{theorem}[Bianchi permutability, {\cite{bianchi_ricerche_1905}, \cite[\S
    5.6.6]{udo_habil}, \cite{habil}}]
  \label{thm:bianchi} Let
  $f: M \to S^4$ be an isothermic surface.  Let
  $\varrho_1, \varrho_2\in\R_*$ and $f_i$ be the Darboux transforms
  given by $d_{\varrho_i}$--parallel sections
  $\varphi_{i} =\varphi_i^{\varrho_i}\in\Gamma(\ttrivial 2)$.  If
  $f_1(p)\not=f_2(p)$ for all $p\in M$ then
\[
 \varphi_{12}  =\varphi_{2}- \varphi_{1}\chi 
\]
gives a $\varrho_2$--Darboux transform of $f_1$ and a
$\varrho_1$--Darboux transform of $f_2$ on the universal cover $\tilde
M$ of $M$ by
\[
f_{12} = \varphi_{12}\H\,.
\]
Here $\chi: \tilde M \to \H$ is given by $d\varphi_{2} = d\varphi_{1}\chi$.
\end{theorem}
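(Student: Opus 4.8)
The plan is to verify the two claimed assertions separately: that $\varphi_{12}$ is a $d_{\varrho_2}$--parallel section of the family of flat connections associated to $f_1$, and that it is a $d_{\varrho_1}$--parallel section of the family associated to $f_2$; by symmetry in the roles of $f_1$ and $f_2$ it suffices to do the first. First I would recall that, as discussed in the introduction, the associated family $d^1_\lambda$ of the Darboux transform $f_1$ is gauge--equivalent to $d_\lambda$ via the simple--factor gauge $r_\lambda$ with a simple pole at $\varrho_1$; concretely, $d^1_\lambda = r_\lambda\cdot d_\lambda$, and $d^1_\lambda$ extends smoothly across $\lambda=\varrho_1$. Hence to show that $\varphi_{12}$ is $d^1_{\varrho_2}$--parallel it is enough to show that $\varphi_{12} = r_{\varrho_2}(\varphi_2)$ up to a constant (or, more directly, that the section $r_\lambda(\varphi_2^\lambda)$ extends to $\lambda = \varrho_2$ and equals $\varphi_{12}$ there), since $r_{\varrho_2}$ carries $d_{\varrho_2}$--parallel sections to $d^1_{\varrho_2}$--parallel sections away from the pole.

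The computational heart is the identity $d\varphi_{2} = d\varphi_{1}\chi$ defining $\chi\colon\tilde M\to\H$. I would first check this is well posed: since $f_1(p)\neq f_2(p)$, the sections $\varphi_1(p)$ and $\varphi_2(p)$ are $\H$--linearly independent at every point, so the differential $d\varphi_2$, which a priori takes values in $\trivial 2$, must — using flatness of $d_{\varrho_1}$ and $d_{\varrho_2}$ together with $\eta^2=0$ and $\Im\eta\subset L\subset\ker\eta$ — be expressible as $d\varphi_1$ times a quaternionic function. Explicitly, $d\varphi_i = -\varrho_i\,\eta\varphi_i$ (from $d_{\varrho_i}\varphi_i=0$), and since $\eta\varphi_i$ takes values in $\Im\eta\subset L = \psi\H$, both $d\varphi_1$ and $d\varphi_2$ are sections of $L$; as $\psi$ is nowhere zero this yields a well--defined $\chi$ with $d\varphi_2 = d\varphi_1\chi$. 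I would then compute $d_{\varrho_2}$ applied to $\varphi_{12} = \varphi_2 - \varphi_1\chi$ in the connection pulled back to $f_1$; the key cancellation is that $\varrho_2\eta\varphi_{12}$ and $(d\varphi_2 - d\varphi_1\chi - \varphi_1 d\chi) = -\varphi_1 d\chi$ combine, after applying the gauge, so that the $\eta$--terms telescope precisely because $\eta$ annihilates $L$ and $\im\eta\subset L$. The outcome should be that $\varphi_{12}$ is $d^1_{\varrho_2}$--parallel, where $d^1_\lambda = d + \lambda\eta^1$ is the retraction form of $f_1$; one identifies $\eta^1$ as the $\varphi_1$--twisted version of $\eta$ and checks $\im\eta^1\subset \hat L_{12}\subset\ker\eta^1$ automatically from $\varphi_{12}\in\ker(\text{something})$.

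After establishing parallelity I would check that $f_{12} = \varphi_{12}\H$ is genuinely a Darboux transform, i.e. that $L_1(p) = \varphi_1(p)\H \neq \varphi_{12}(p)\H$ for all $p$: this is immediate since $\varphi_{12} \equiv \varphi_2 \pmod{\varphi_1\H}$ and $\varphi_1,\varphi_2$ are pointwise independent, so $\varphi_{12}$ is never in $\varphi_1\H$. Finally, the symmetric statement that $f_{12}$ is a $\varrho_1$--Darboux transform of $f_2$ follows by interchanging indices: one sets $\tilde\chi = 1-\chi$ perhaps up to bookkeeping, noting $\varphi_{12} = \varphi_1 - \varphi_2(\text{something})$ after rearranging, and reruns the same argument with $1\leftrightarrow 2$; here the relation $d\varphi_2 = d\varphi_1\chi$ must be repackaged as $d\varphi_1 = d\varphi_2\chi^{-1}$ on the (open, dense, and in fact all of $\tilde M$ once one checks $\chi$ is invertible) locus where $\chi\neq 0$.

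The step I expect to be the main obstacle is the bookkeeping in the gauge computation — keeping straight that $d\varphi_2 = d\varphi_1\chi$ only controls the derivative and not $\varphi_2$ itself, so that $d\chi$ genuinely appears and must be absorbed into the new retraction form $\eta^1$ rather than cancelling outright; verifying that the resulting $\eta^1$ satisfies $(\eta^1)^2 = 0$ and the flag condition $\im\eta^1\subset L_1\subset\ker\eta^1$, and that $d^1_\lambda$ is flat, is where the real content sits, and it is precisely here that one should lean on the simple--factor--dressing description cited in the introduction rather than reprove flatness from scratch. A secondary subtlety is the global/universal--cover issue: $\chi$ is defined by integrating $d\chi$, so it lives on $\tilde M$, consistent with the statement, and one should remark that no further period conditions are imposed.
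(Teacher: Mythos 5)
The paper never proves this theorem --- it is quoted from the literature, and the only in--paper justification is Remark \ref{rem:doublebianchi} on the well--definedness of $\chi$ --- so the only meaningful comparison is against the machinery the paper does develop. There your instinct is exactly right: the clean proof runs through the simple factor dressing of Theorem \ref{thm:sfd}, and your claim that $\varphi_{12}$ equals $r_{\varrho_2}(\varphi_2)$ up to a constant \emph{is} the whole proof. But you leave precisely that identification unexecuted, and the alternative computation you sketch (tracking $d\chi$ and absorbing it into a new retraction form, which you yourself flag as the ``main obstacle'') is unnecessary. Here is how to close the gap without ever differentiating $\chi$. Write $\hat\pi\varphi_2=\varphi_1\mu$ for the $L_1$--component of $\varphi_2$ in the splitting $\trivial 2=L_1\oplus L$; this is defined pointwise because $\varphi_1,\varphi_2$ are pointwise $\H$--independent. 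Since $\pi\varphi_2\in\Gamma(L)\subset\ker\eta$ we get $\eta\varphi_2=\eta\varphi_1\mu$, and substituting $d\varphi_i=-\varrho_i\eta\varphi_i$ into the defining relation $d\varphi_2=d\varphi_1\chi$ gives $\eta\varphi_1\left(\varrho_2\mu-\varrho_1\chi\right)=0$, hence $\chi=\tfrac{\varrho_2}{\varrho_1}\mu$. Therefore
\[
\varphi_{12}=\varphi_1\mu\,\tfrac{\varrho_1-\varrho_2}{\varrho_1}+\pi\varphi_2
=\tfrac{\varrho_1-\varrho_2}{\varrho_1}\,r_{\varrho_2}(\varphi_2)\,,
\]
a constant real multiple of the gauged section, which is $d^1_{\varrho_2}$--parallel because $d^1_\lambda=r_\lambda\cdot d_\lambda$ for $\lambda=\varrho_2\neq\varrho_1$. (When $\varrho_1=\varrho_2$ this degenerates together with $r_{\varrho_2}$, and one falls back on $\varphi_{12}=\pi\varphi_2\in\Gamma(L)$ as in Remark \ref{rem:doublebianchi}.) Your check that $L_{12}(p)\neq L_1(p)$ is fine.

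Two smaller corrections. First, $\tilde\chi=1-\chi$ is wrong: interchanging the roles of $f_1$ and $f_2$ requires $\tilde\chi=\chi\invers$, so that $\varphi_{21}=\varphi_1-\varphi_2\chi\invers=-\varphi_{12}\chi\invers$ spans the same line; moreover $\chi$ is invertible everywhere (not just on a dense set) because $\varphi_2(p)\notin L(p)$ for all $p$, by the same token that makes $\chi$ well defined. Second, on that well--definedness: writing $d\varphi_i=-\varrho_i\eta\varphi_i$ and using the coordinate form \eqref{eq:eta coord} gives $\eta\varphi_i=\psi\,\omega\,\alpha_i$ with $\varphi_i=e\alpha_i+\psi\beta_i$, so the existence of $\chi$ rests on $\alpha_1$ being nowhere zero, i.e.\ on $L_1(p)\neq L(p)$ (part of the definition of a Darboux transform), together with $\omega\neq0$ --- ``$\psi$ is nowhere zero'' is not by itself the reason.
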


\begin{rem}
\label{rem:doublebianchi} Note that the condition $d_{\varrho_i}\varphi_{i}=0$
  shows that $d\varphi_{i}\in\Omega^1(L)$, and thus
  $\chi: \tilde M \to\H$ is well--defined. The classical case can be
  extended to allow $\varrho_1=\varrho_2$ in which case the parallel
  section $\varphi_{12}\in\Gamma(L)$ is a section in $L =\ker \eta$: since
  $d\varphi_2 =d\varphi_1\chi$ and $d_\varrho\varphi_{i}=0$, we see
  that $\eta\varphi_2 = \eta\varphi_1\chi$ and thus $\eta \varphi_{12}
 =0$. 

 In particular, the Darboux transform $f_{12}$ is $f$: in contrast to
 the case when $\varrho_1\not=\varrho_2$ we do not get all Darboux
 transforms of $f_1$ with parameter $\varrho_1=\varrho_2$ by this
 construction. We will discuss how to obtain all Darboux transforms by
 a Sym--type argument in the next section.
\end{rem}

We also know \cite{isothermic_paper} that
$\varphi_{12}=\varphi_{12}^{\varrho_2}$ is a parallel section of the
family of flat connections of $f_1$ for spectral parameter
$\varrho_2$, and
$\varphi_{21}= \varphi_{21}^{\varrho_1}:= \varphi_{12}\chi\invers$ is
a parallel section of the family of flat connections of $f_2$ at
$\varrho_1$. In particular,
$f_{12} =\varphi_{12}\H = \varphi_{21}\H = f_{21}$: 

\[
\diagram
 & & f_1 \drrto^{\varphi_{12}^{\varrho_2}} & & \\
f\urrto^{{\varphi_1^{\varrho_1}}}\drrto_{\varphi_2^{\varrho_2}} & && & f_{12}=\hat
f_1=\hat f_2 = f_{21}\\
 & &f_2\urrto_{\varphi_{21}^{ \varrho_1}} & &
\enddiagram
\]

\begin{figure}[H]
	\centering
	\begin{minipage}{0.32\textwidth}
		\centering
		\includegraphics[width=\linewidth]{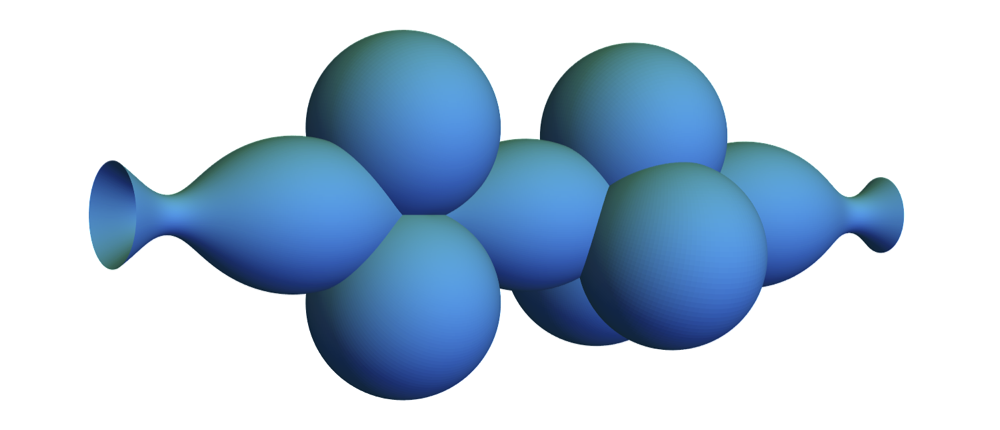}
	\end{minipage}
	\begin{minipage}{0.323\textwidth}
		\centering
		\includegraphics[width=\linewidth]{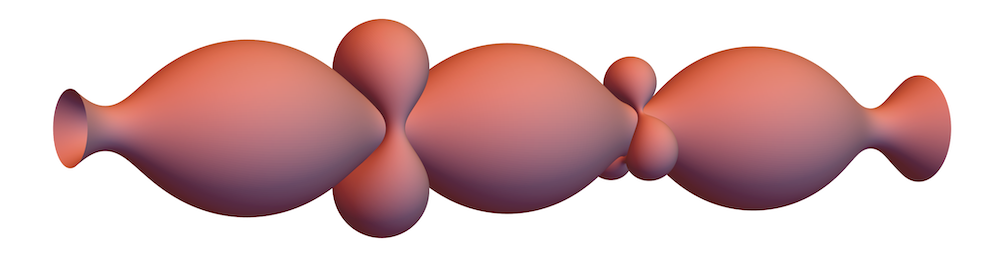}
	\end{minipage}
	\begin{minipage}{0.32\textwidth}
		\includegraphics[width=\linewidth]{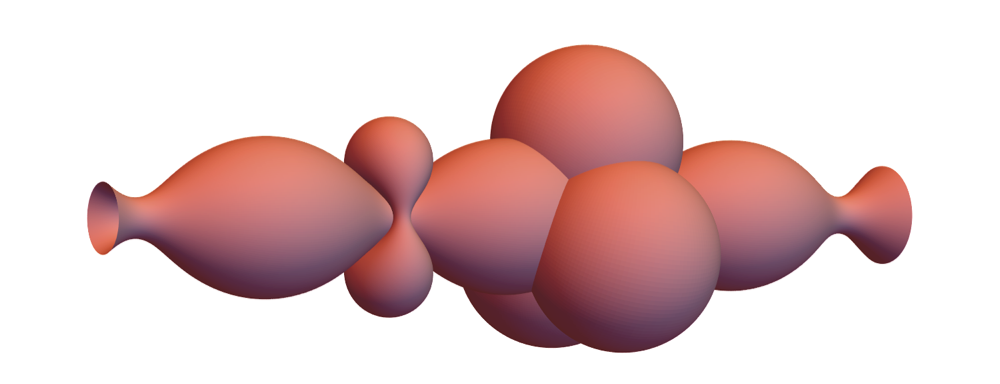}
	\end{minipage}
	\caption{Common Darboux transforms of two bubbletons}
\end{figure}

\section{Generalised Bianchi permutability}
\label{sec:generalisedBianchi}

Given an isothermic surface $f$ with associated family $d_\lambda$ and
a Darboux transform $f_1$ given by spectral parameter $\varrho_1\in\R$
and $d_{\varrho_1}$--parallel section
$\varphi_1 = \varphi_1^{\varrho_1}$, Bianchi permutability allows to
compute Darboux transforms of $f_1$ for all spectral parameter
$\varrho_2\not=\varrho_1$ by solely knowing the parallel sections of
the family of flat connections of $f$ and performing an algebraic
operation.  However, in the case when $\varrho:=\varrho_1=\varrho_2$
we only obtain one Darboux transform of $f_1$ via Bianchi
permutability, namely $f_{12}=f$.  In this section we show that we
still obtain all Darboux transforms of $f_1$ without integration by
the parallel sections of the associated family of $f$. The Darboux
transform in this case is not given algebraically but by a Sym--type
argument: we will differentiate parallel sections with respect to the
spectral parameter.

\subsection{Simple factor dressing}

Let $f: M \to S^4$ be an isothermic surface with associated family
$d_\lambda$ and let $\hat f=f_1$ be a Darboux transform given by a
$d_\varrho$--parallel section $\varphi$.  To find all parallel
sections of the associated family $\hat d_\lambda =d^1_\lambda$ of
$\hat f$ at $\lambda=\varrho$ in terms of parallel sections of
$d_\lambda$ we need to understand $\hat d_\lambda$ at $\varrho$. To
this end, we recall the so--called simple factor dressing: it is known
that a suitable $\lambda$--dependent gauge matrix $r_\lambda$ with a
simple pole given by $\varrho$ gives via gauging
  the associated family
$\hat d_\lambda = r_\lambda\cdot d_\lambda$ of a $\varrho$--Darboux
transform.

\begin{theorem}[Simple Factor Dressing, {\cite[Definition 3.7]{burstall_isothermic_2011}}] 
  \label{thm:sfd} Let $f: M \to S^4$ be isothermic with associated
  family $d_\lambda = d+ \lambda\eta$, $\lambda\in\R$. Let
  $\varrho\in\R_*$ and let $\varphi\in\Gamma(\ttrivial 2)$ be a
  $d_\varrho$--parallel section with corresponding Darboux transform
  $\hat f: \tilde M \to \H$ given by $\hat L =\varphi\H$.  Denote by $\hat\pi$ and
  $\pi$ the projections onto $\hat L$ and $L$ respectively along the
  splitting $\trivial 2= \hat L\oplus L$ and define
\begin{equation}
\label{eq: gauge matrix}
r(\lambda) = r_\varrho^{\hat L}(\lambda)= \hat\pi+ \sigma_\varrho(\lambda)\pi
\end{equation}
with
\[
\sigma_\varrho(\lambda)=\frac{\varrho}{\varrho - \lambda
}\,.
\]
 Then $\hat d_\lambda = r(\lambda)\cdot d_\lambda$  is the family of flat
 connections of the Darboux transform $\hat f$. 
Moreover, $\hat d_\lambda= d+\lambda\hat\eta$ with
\[
\hat \eta =-\hat\pi \circ d \circ \pi \frac 1\varrho\,.
\]
\end{theorem}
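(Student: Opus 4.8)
The plan is to compute the gauged family $r(\lambda)\cdot d_\lambda$ directly and to read off that it is affine in $\lambda$. With the convention $(r\cdot d_\lambda)\psi = r\,d_\lambda(r\invers\psi)$ one has, as connections on $\ttrivial 2$,
\[
r\cdot d_\lambda = d + r(dr\invers) + \lambda\, r\,\eta\, r\invers,
\]
and the whole content of the first assertion is that this \emph{a priori} rational expression in $\lambda$ collapses to $d+\lambda\hat\eta$ with $\hat\eta$ independent of $\lambda$. First I would record the algebra of the complementary projections $\hat\pi+\pi=\Id$, $\hat\pi^2=\hat\pi$, $\pi^2=\pi$, $\hat\pi\pi=\pi\hat\pi=0$, which gives $r\invers=\hat\pi+\sigma_\varrho(\lambda)\invers\pi$; differentiating the idempotent relations shows the diagonal blocks of $d\pi$ vanish, $\hat\pi(d\pi)\hat\pi=\pi(d\pi)\pi=0$, so $d\pi=\hat\pi(d\pi)\pi+\pi(d\pi)\hat\pi$, and since $d\hat\pi=-d\pi$ while $\sigma_\varrho(\lambda)$ is constant on $M$ this yields $dr\invers=(\sigma_\varrho(\lambda)\invers-1)\,d\pi$. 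The structural property $\im\eta\subset L\subset\ker\eta$ forces $\eta=\pi\eta\hat\pi$, hence $r\,\eta\, r\invers=\sigma_\varrho(\lambda)\,\eta$. Substituting $\sigma_\varrho(\lambda)\invers-1=-\lambda/\varrho$ and $1-\sigma_\varrho(\lambda)=-\lambda/(\varrho-\lambda)$ then gives
\[
r\cdot d_\lambda - d = -\frac{\lambda}{\varrho}\,\hat\pi(d\pi)\pi + \frac{\lambda}{\varrho-\lambda}\bigl(\varrho\,\eta-\pi(d\pi)\hat\pi\bigr).
\]

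The decisive step is the identity $\pi(d\pi)\hat\pi=\varrho\,\eta$, which kills the $\lambda/(\varrho-\lambda)$--term and thereby removes the pole at $\varrho$. I would obtain it from the defining property of the Darboux transform: since $\varphi$ spans $\hat L=\ker\pi$ we have $\pi\varphi=0$, and differentiating together with $d_\varrho\varphi=0$, i.e.\ $d\varphi=-\varrho\,\eta\varphi$, and with $\pi\eta=\eta$, gives $(d\pi)\varphi=-\pi\,d\varphi=\varrho\,\eta\varphi$. Now $\pi(d\pi)\hat\pi$ and $\varrho\eta$ both annihilate $L$ (using that $\hat\pi$ vanishes on $L$ and $L\subset\ker\eta$) and agree on $\hat L=\varphi\H$ by the previous line (using $\pi\eta=\eta$ and quaternionic linearity from the right); since $\hat L\oplus L=\trivial 2$ pointwise --- which is exactly the hypothesis $L(p)\neq\hat L(p)$, and also what makes $r(\lambda)$ well defined --- the two $\End$--valued $1$--forms coincide. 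Hence the second display collapses to $\hat d_\lambda=d+\lambda\hat\eta$ with $\hat\eta=-\tfrac1\varrho\,\hat\pi(d\pi)\pi$, and the identity $\hat\pi\circ d\circ\pi=\hat\pi(d\pi)\pi$ (the first--order term drops out since $\hat\pi\pi=0$) rewrites this as $\hat\eta=-\hat\pi\circ d\circ\pi\,\tfrac1\varrho$, as claimed; note that although $r(\lambda)$ has a simple pole at $\varrho$, the gauged family is polynomial in $\lambda$ and so extends across $\varrho$.

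It remains to check that $\hat d_\lambda$ really is the associated family of the isothermic surface $\hat f$. Flatness for every $\lambda$ is automatic, $\hat d_\lambda$ being gauge equivalent to the flat $d_\lambda$; expanding $R^{\hat d_\lambda}=\lambda\,d\hat\eta+\lambda^2\,\hat\eta\wedge\hat\eta$ then forces $d\hat\eta=0$ and $\hat\eta\wedge\hat\eta=0$ (the latter also visible directly from $\pi\hat\pi=0$ sitting in the middle of $\hat\eta^2$). From $\hat\eta=-\tfrac1\varrho\,\hat\pi(d\pi)\pi$ one reads off $\im\hat\eta\subset\hat L$ and $\hat L\subset\ker\hat\eta$, and $\hat\eta\neq0$ since $\hat\pi(d\pi)\pi=0$ would make $L$ a $d$--parallel line subbundle, which is impossible for the immersion $f$. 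Thus $\hat\eta$ is a non--trivial closed $1$--form with $\im\hat\eta\subset\hat L\subset\ker\hat\eta$, so $\hat f$ is isothermic with retraction form $\hat\eta$ and associated family $\hat d_\lambda=d+\lambda\hat\eta$; alternatively, by Theorem~\ref{thm: family} the bundle $\ker\hat\eta$ extends to an isothermic line bundle, necessarily $\hat L$ since it contains $\hat L$ and coincides with it on the dense open set where $\hat\eta\neq0$.

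The only genuinely substantive point is the cancellation identity $\pi(d\pi)\hat\pi=\varrho\eta$; everything else is bookkeeping with the two projections and with the block structure dictated by $\im\eta\subset L\subset\ker\eta$. The place to stay careful is that the whole computation presupposes the direct sum $\trivial 2=\hat L\oplus L$, i.e.\ that $\hat f$ is a non--singular Darboux transform; where this fails one would instead need the singular version (cf.\ the remark following the definition of Darboux transform), which we do not pursue here.
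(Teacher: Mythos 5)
Your proof is correct, but it runs along a genuinely different track from the paper's. The paper does not expand the gauged connection in closed form: it invokes the known symmetry of the Darboux transformation to produce a $\hat d_\varrho$--parallel section $\hat\varphi\in\Gamma(L)$, checks that $r_\lambda\cdot d_\lambda$ and $\hat d_\lambda$ agree on the frame $\{\varphi,\hat\varphi\}$ for $\lambda\neq\varrho$, and then extracts $\hat\eta$ from the limit $\lim_{\lambda\to\infty}\frac1\lambda\, r_\lambda\cdot d_\lambda$ after recording the block decompositions of $r_\lambda\cdot d$ and $\Ad(r_\lambda)\eta$. You instead compute $r(\lambda)\cdot d_\lambda=d+r(dr\invers)+\lambda\,r\eta r\invers$ outright, reduce everything to the off--diagonal blocks of $d\pi$, and isolate the single cancellation $\pi(d\pi)\hat\pi=\varrho\,\eta$ --- which is precisely the $d_\varrho$--parallelity of $\varphi$ in disguise --- as the reason the apparent pole at $\lambda=\varrho$ is removable. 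Your route is more self--contained: it does not presuppose that $\hat L$ is isothermic or that $L$ is a $\varrho$--Darboux transform of $\hat L$, but rather re-derives the isothermicity of $\hat f$ with the explicit retraction form $\hat\eta=-\hat\pi\circ d\circ\pi\,\tfrac1\varrho$ (your non-triviality argument, that $\hat\pi\circ d\circ\pi=0$ would force $\delta=0$ and contradict $f$ being an immersion, is the right one). What the paper's argument buys in exchange is the normalisation statement baked into its setup --- that $\hat d_\lambda$ is the associated family for which $L$ is again a $\varrho$--Darboux transform of $\hat L$ --- which is the form of the result used later; in your version this is recovered a posteriori from the explicit formula for $\hat\eta$. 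Both arguments correctly hinge on the pointwise splitting $\trivial 2=\hat L\oplus L$, which you rightly flag as the non--singularity hypothesis.
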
 
\begin{proof}
Since the Darboux transform $\hat L =\varphi\H$ is an isothermic
surface, we can consider its  family of flat connections  $\hat d_\lambda  =
d+ \lambda\hat \eta$, $\lambda\in\R$, with $\im\hat\eta=\ker\hat\eta
=\hat L$. We first show that 
\[
r_\lambda\cdot d_\lambda =\hat d_\lambda \quad \text{for all} \quad \lambda\in\R\setminus\{\varrho\}\,.
\]

Since $L$ is a Darboux transform of $\hat L$ with parameter
$\varrho$, there exists a $\hat d_\varrho$--parallel section
$\hat\varphi\in\Gamma(L)$.  Since $\trivial 2=L \oplus \hat L =
\hat\varphi\H \oplus \varphi\H$ it is enough
to show that the connections $r_\lambda\cdot d_\lambda$ and
$\hat d_\lambda$ coincide on $\varphi$ and $\hat\varphi$.

Since
$r_\lambda\invers = \hat\pi + \pi
\sigma_\varrho(\lambda)\invers$, $d_\varrho\varphi=0$ and
$\hat\eta\varphi=0$ we have
\begin{align*}
(r_\lambda \cdot d_\lambda)\varphi&=
r_\lambda(d\varphi + \eta\varphi\lambda) = r_\lambda(\eta \varphi(\lambda- \varrho)) =-\eta\varphi \varrho =  d\varphi = \hat d_\lambda
\varphi\,.
\end{align*}

Similarly,  we see that $\hat d_\varrho \hat\varphi=0$ and
$\eta\hat\varphi=0$ give
\begin{align*}
(r_\lambda \cdot d_\lambda)\hat\varphi&=
r_\lambda(d_\lambda\hat \varphi \frac{\varrho-\lambda}\varrho) =r_\lambda(d \hat\varphi \frac{\varrho-\lambda}\varrho) =r_\lambda(\hat\eta\hat\varphi)(\lambda-\varrho)  =
  \hat\eta\hat\varphi(\lambda-\varrho) = \hat d_\lambda\hat\varphi\,.
\end{align*}

Thus, $r_\lambda\cdot d_\lambda =d+ \lambda\hat \eta$ for
$\lambda\not=\varrho$ and $r_\lambda\cdot d_\lambda$ extends to
$\lambda=\varrho$. We observe that
\[
r_\lambda \cdot d= \hat\pi\circ d \circ\hat\pi + \pi\circ d
\circ \pi + \hat\pi \circ d \circ \pi
\frac{\varrho-\lambda}\varrho + \pi \circ d\circ \hat\pi  \frac{\varrho}{\varrho-\lambda}
\]
and 
\[
\Ad(r_\lambda)\eta = \pi \circ \eta \circ\hat \pi
\frac\varrho{\varrho-\lambda}
\]
since $\eta|_L =0, \im \eta \subset L$.
Therefore, 
the claim now follows from
\[
\hat \eta =\lim_{\lambda\to \infty} \frac 1\lambda r_\lambda \cdot d_\lambda= -\hat\pi \circ d \circ
\pi \frac 1 \varrho\,.
\]
Note that indeed $\hat\eta^2=0$ and $\im\hat \eta =\ker\hat \eta =
\hat L$. \end{proof}

In particular, the family of flat connections $\hat
d_\lambda=r_\lambda\cdot d_\lambda$ extends into the pole $\varrho$ of
$r_\lambda$. 
We will now investigate parallel sections of $\hat d_\lambda$ at
$\lambda=
\varrho$ and
their corresponding Darboux transforms in terms of parallel sections
of $d_\lambda$.

\subsection{Bianchi--type and Sym--type parallel sections}

Let $f_1$ be the Darboux transform of an isothermic surface
$f: M \to S^4$ which is given by $\varrho\in\R_*$ and a
$d_\varrho$--parallel section $\varphi_1=\varphi_1^\varrho$, and
$d^1_\lambda$ its associated family of flat connections. For
$\lambda\not=\varrho$ all parallel sections of $d^1_\lambda$ are given
by Bianchi permutability.  We are now investigating parallel sections
of $d^1_\lambda$ at $\lambda=\varrho$.

\begin{prop}
Assume that $\varphi_2 = \varphi_2^\varrho$ is $d_\varrho$--parallel
and independent of $\varphi_1$ over $\H$.  Then
\[
\varphi_{12} = \pi\varphi_2\,
\]
is a  parallel section of the flat connection 
\[
d_\varrho^1 =d - \pi_1 \circ d \circ \pi
\]
of $f_1$.  Here $\pi$ and $\pi_1$ are the projections onto $L$ and
$L_1$ respectively along the splitting
$\trivial 2= L_1 \oplus L$. We call $\varphi_{12}$ a  \emph{Bianchi--type section}.  The associated Darboux transform
of $f_1$ 
is 
$f_{12}=f$.
\end{prop}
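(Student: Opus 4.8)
The plan is to check the two assertions in turn: that $\varphi_{12}=\pi\varphi_2$ is parallel for the connection $d^1_\varrho$ written as $d-\pi_1\circ d\circ\pi$, and that the bundle $\varphi_{12}\H$ it spans is $L$ itself. The shape of the connection needs no argument beyond Theorem~\ref{thm:sfd}: with $\hat L=L_1$ and $\hat\pi=\pi_1$ there, the member at $\lambda=\varrho$ of the simple factor dressing family of $f_1$ is
\[
d^1_\varrho = d+\varrho\,\hat\eta = d+\varrho\Bigl(-\pi_1\circ d\circ\pi\,\tfrac1\varrho\Bigr) = d-\pi_1\circ d\circ\pi .
\]
So the work lies in the parallelity of $\varphi_{12}$ and in the identification of the transform.

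For parallelity I would split along $\trivial 2=L_1\oplus L$. Since $\varphi_1$ is a nowhere--vanishing section of $L_1$, write $\pi_1\varphi_2=\varphi_1\chi$ for the unique $\chi\colon\tilde M\to\H$, so that $\varphi_2=\varphi_{12}+\varphi_1\chi$ with $\varphi_{12}=\pi\varphi_2\in\Gamma(L)$. From $\pi+\pi_1=\Id$ and $\pi\varphi_{12}=\varphi_{12}$, the defining formula for $d^1_\varrho$ gives $d^1_\varrho\varphi_{12}=d\varphi_{12}-\pi_1(d\varphi_{12})=\pi(d\varphi_{12})$, so it suffices that the $L$--component of $d\varphi_{12}$ vanish. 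Expanding, $d\varphi_{12}=d\varphi_2-(d\varphi_1)\chi-\varphi_1\,d\chi$; the term $\varphi_1\,d\chi$ is $L_1$--valued, while $d_\varrho\varphi_i=0$ gives $d\varphi_i=-\varrho\,\eta\varphi_i$, which is $L$--valued because $\im\eta\subset L$. Therefore
\[
\pi(d\varphi_{12}) = d\varphi_2-(d\varphi_1)\chi = -\varrho\,\eta(\varphi_2-\varphi_1\chi) = -\varrho\,\eta\varphi_{12} = 0,
\]
the last equality since $\varphi_{12}\in\Gamma(L)\subset\Gamma(\ker\eta)$. Hence $d^1_\varrho\varphi_{12}=0$.

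To conclude $f_{12}=\varphi_{12}\H=L=f$, I would first note that $\varphi_{12}=\pi\varphi_2$ is nowhere zero: if it vanished at some $p_0$ then $\varphi_2(p_0)\in L_1(p_0)=\varphi_1(p_0)\H$, so $\varphi_2-\varphi_1h_0$ would be a $d_\varrho$--parallel section for a constant $h_0\in\H$ vanishing at $p_0$, hence identically zero on the simply connected cover $\tilde M$ by flatness of $d_\varrho$ --- contradicting the $\H$--independence of $\varphi_1,\varphi_2$. Being a nowhere--vanishing section of the line bundle $L$, $\varphi_{12}$ spans $L$, so $f_{12}=L=f$.

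The point that needs care is the cancellation $\pi(d\varphi_{12})=-\varrho\,\eta\varphi_{12}=0$; it relies on $\varphi_1$ and $\varphi_2$ being parallel for the \emph{same} parameter $\varrho$ --- which is precisely what makes the $L_1$--coefficient $\chi$ above coincide with the function characterised by $d\varphi_2=(d\varphi_1)\chi$ in Remark~\ref{rem:doublebianchi} --- and on $L\subset\ker\eta$. Alternatively one can avoid the splitting: putting $\psi_\lambda=\tfrac{\varrho-\lambda}\varrho\,r(\lambda)\varphi_2=\varphi_{12}+\tfrac{\varrho-\lambda}\varrho\,\pi_1\varphi_2$ and using $d_\lambda\varphi_2=(\lambda-\varrho)\,\eta\varphi_2$ together with $\eta\varphi_2\in\Gamma(L)$, one gets $d^1_\lambda\psi_\lambda=(\lambda-\varrho)\,\eta\varphi_2$ for $\lambda\neq\varrho$; since both sides are polynomial in $\lambda$, evaluating at $\lambda=\varrho$ recovers $d^1_\varrho\varphi_{12}=0$.
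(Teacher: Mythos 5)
Your proof is correct, but it takes a different route from the paper's. The paper's proof is essentially two lines: it invokes Theorem~\ref{thm:bianchi} to assert that $\tilde\varphi=\varphi_2-\varphi_1\chi$ (with $\chi$ defined by $d\varphi_2=d\varphi_1\chi$) is $d^1_\varrho$--parallel, invokes Remark~\ref{rem:doublebianchi} to conclude $\tilde\varphi\in\Gamma(L)$, and then observes $\tilde\varphi=\pi\tilde\varphi=\pi\varphi_2$ because $\varphi_1\chi\in\Gamma(L_1)$. You instead verify parallelity directly against the explicit formula $d^1_\varrho=d-\pi_1\circ d\circ\pi$, defining $\chi$ by $\pi_1\varphi_2=\varphi_1\chi$ and checking that the $L$--component of $d\varphi_{12}$ is $-\varrho\,\eta\varphi_{12}=0$; in effect you re-prove the degenerate ($\varrho_1=\varrho_2$) case of Bianchi permutability rather than quoting it, and your observation that your $\chi$ automatically satisfies $d\varphi_2=(d\varphi_1)\chi$ recovers the link to the quoted statement. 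Your version buys self-containedness and also supplies two details the paper leaves implicit: the derivation of the stated form of $d^1_\varrho$ from Theorem~\ref{thm:sfd}, and the nowhere-vanishing of $\pi\varphi_2$ (via flatness and $\H$--independence), which is what actually justifies $f_{12}=\varphi_{12}\H=L=f$. The cost is length; the paper's argument is shorter because the analytic content is already packaged in Theorem~\ref{thm:bianchi} and Remark~\ref{rem:doublebianchi}. Your closing alternative via $\psi_\lambda=\tfrac{\varrho-\lambda}{\varrho}r(\lambda)\varphi_2$ is also sound and is closer in spirit to how the paper treats the Sym--type sections (gauging by $r_\lambda$ and passing to the limit $\lambda\to\varrho$), so it fits the surrounding text well.
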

\begin{proof}
Consider the $d_\varrho^1$--parallel section  $\tilde
\varphi$ given by Bianchi permutability by
\[
\tilde \varphi= \varphi_2-\varphi_1\chi
\]
with $d\varphi_2 = d\varphi_1\chi$. By Remark \ref{rem:doublebianchi}
we know that $\tilde \varphi\in\Gamma(L)$ is a section in $L$.  Therefore,
\[
\tilde\varphi = \pi(\varphi_2-\varphi_1\chi) = \pi\varphi_2 =\varphi_{12}\,.
\]
\end{proof}

Since all $d^1_\varrho$--parallel sections arising from Bianchi
permutability are sections in $L$ and therefore quaternionic multiples
of $\varphi_{12}$ we know that there exist $d^1_\varrho$--parallel
sections on the universal cover $\tilde M$ of $M$ which do not arise
from Bianchi permutability since  $d^1_\varrho$ is a flat connection on
$\trivial 2$. We now investigate these.

Recall that away from $\lambda=\varrho$, we have $d^1_\lambda = r_\lambda\cdot
d_\lambda$ where 
\[
r_\lambda = \pi_1 + \sigma_\varrho(\lambda)\pi, \quad
\sigma_\varrho(\lambda) = \frac{\varrho}{\varrho-\lambda}
\]
  is the simple factor dressing matrix
given by the bundle $L_1$ and the pole $\varrho$. 

Moreover, if $\varphi_1^\lambda$ are $d_\lambda$--parallel sections
with $\varphi_1 = \varphi_1^{\lambda=\varrho}$, which depend smoothly on
  $\lambda$, then $\varphi_{11}^\lambda= r_\lambda \varphi_1^\lambda$ is
  $d^1_\lambda$--parallel away from $\lambda=\varrho$.  At
  $\lambda=\varrho$ the dressing matrix $r_\lambda$ has a
  pole. However, by L'H\^opital's rule the limit
  $\varphi_{11} = \lim_{\lambda\to\varrho} \varphi_{11}^\lambda $ at
  $\varrho$ exists since
  $\lim_{\lambda\to \varrho} \pi\varphi_1^\lambda=0$, and we obtain
\[
\varphi_{11}= \varphi_1 - \varrho\pi \varphid\,. 
\]
Indeed, $\varphi_{11}$ is parallel with respect to $d^1_\varrho=
d-\hat\pi \circ d\circ \pi$:
since $d_\lambda\varphi_1^\lambda=0$ we first have 
\[
0= \frac \partial{\partial \lambda}
(\pi d_\lambda\varphi_1^\lambda)|_{\lambda=\varrho} = 
\pi \frac \partial{\partial \lambda}(d\varphi_1^\lambda + \lambda
\eta\varphi_1^\lambda)|_{\lambda=\varrho} = \pi(d_\varrho \varphid +
\eta\varphi_1)\,,
\]
so that
\[
\eta\varphi_1 =- \pi(d_\varrho \varphid)=- \pi(d_\varrho\pi \varphid)\,.
\]
Here we used that $\im\eta\in L$ and that $L_1$ is
$d_\varrho$--stable so that
$\pi \circ d_\varrho \circ \pi = \pi \circ d_\varrho$.  Together with
$\eta|_L=0$ and $\pi \varphi_1=0$ we now see that
\begin{align*}
 d^1_\varrho\varphi_{11} &= d\varphi_1 - \varrho d \pi(\varphid) +\varrho
\pi_1  d   \pi \varphid
=
d\varphi_1-\varrho \pi d\pi (\varphid)\\
& = d\varphi-\varrho \pi d_\varrho\pi (\varphid) =d_\varrho\varphi_1 =0\,.
\end{align*}

Thus, we have shown that $\varphi_{11}$ gives a Darboux transform
$f_{11}$ of
$ f_1$. 
Since
$\pi_1\varphi_{11} = \varphi_1\not=0$, we see that
$f_{11}\not=f$, and thus,  $f_{11}$ is not a Darboux transform given
by Bianchi permutability.
We summarise:

\begin{theorem}
\label{thm:sym type}
Let $f: M \to S^4$ be isothermic and $d_\lambda$ its associated family
of flat connections. Let $\varrho\in\R_*$ be fixed,
$\varphi_1=\varphi_1^\varrho\in\Gamma(\ttrivial 2)$ a
$d_\varrho$--parallel section, and $f_1$ the corresponding Darboux
transform. Given $d_\lambda$--parallel sections $\varphi_1^\lambda$ near
$\varrho$ which depend smoothly on $\varrho$ with $\varphi_1^{\lambda=\varrho}=\varphi_1$, the section
\[
\varphi_{11} = \varphi_1^{\lambda=\varrho}-\varrho\pi\varphid
\]
is $d^1_\varrho$--parallel where
$ d^1_\lambda = d- \frac \lambda\varrho\pi_1\circ d \circ \pi$ is the family of flat
connections of $f_1$. We call the $\varphi_{11}$ as \emph{Sym--type
  (parallel) section} and its associated Darboux transform $f_{11}$ a
\emph{Sym--type (two--step) Darboux transform} of $f$.
\end{theorem}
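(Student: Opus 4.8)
The statement is essentially a summary of the computations carried out in the preceding paragraphs, so the plan is to organise those computations into a clean, self-contained argument. First I would fix the setup: let $\varphi_1^\lambda$ be a smooth family of $d_\lambda$--parallel sections with $\varphi_1^{\lambda=\varrho}=\varphi_1$, and recall from Theorem \ref{thm:sfd} that for $\lambda\neq\varrho$ the family of flat connections of $f_1$ is $d^1_\lambda = r_\lambda\cdot d_\lambda$ with $r_\lambda = \pi_1 + \sigma_\varrho(\lambda)\pi$ and $\sigma_\varrho(\lambda)=\varrho/(\varrho-\lambda)$; in particular $\hat\eta = -\tfrac1\varrho\,\pi_1\circ d\circ\pi$, so $d^1_\lambda = d - \tfrac\lambda\varrho\,\pi_1\circ d\circ\pi$ as claimed. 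Then $\varphi_{11}^\lambda := r_\lambda\varphi_1^\lambda$ is $d^1_\lambda$--parallel for $\lambda\neq\varrho$.

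Next I would establish that the limit $\varphi_{11}=\lim_{\lambda\to\varrho}\varphi_{11}^\lambda$ exists and compute it. Writing $\varphi_{11}^\lambda = \pi_1\varphi_1^\lambda + \sigma_\varrho(\lambda)\,\pi\varphi_1^\lambda$, the only singular term is $\sigma_\varrho(\lambda)\,\pi\varphi_1^\lambda$; since $\pi\varphi_1=\pi\varphi_1^{\lambda=\varrho}=0$ (because $\varphi_1\in\Gamma(L_1)$), the numerator $\pi\varphi_1^\lambda$ vanishes at $\lambda=\varrho$, and L'H\^opital gives $\lim_{\lambda\to\varrho}\sigma_\varrho(\lambda)\,\pi\varphi_1^\lambda = -\varrho\,\pi\varphid$. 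Hence $\varphi_{11} = \varphi_1 - \varrho\,\pi\varphid$, the stated formula.

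The substantive step is verifying that $\varphi_{11}$ is genuinely $d^1_\varrho$--parallel — one cannot simply invoke continuity, since $r_\lambda$ degenerates at $\varrho$, so this must be checked by hand. I would differentiate the identity $\pi\circ d_\lambda\varphi_1^\lambda = 0$ in $\lambda$ at $\lambda=\varrho$ to obtain $\pi(d_\varrho\varphid + \eta\varphi_1)=0$, hence $\eta\varphi_1 = -\pi(d_\varrho\varphid) = -\pi(d_\varrho\pi\varphid)$, using that $\im\eta\subset L$ and that $L_1$ is $d_\varrho$--stable (so $\pi\circ d_\varrho\circ\pi = \pi\circ d_\varrho$). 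Then, using $\eta|_L = 0$ and $\pi\varphi_1=0$, a direct expansion of $d^1_\varrho\varphi_{11} = d\varphi_1 - \varrho\, d(\pi\varphid) + \varrho\,\pi_1 d(\pi\varphid)$ collapses, via $d = \pi_1 d + \pi d$ on the relevant terms, to $d\varphi_1 - \varrho\,\pi\, d_\varrho(\pi\varphid) = d_\varrho\varphi_1 = 0$. This is exactly the chain of equalities displayed before the theorem; the main obstacle is simply keeping track of which projection annihilates which term, and I expect no conceptual difficulty beyond that.

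Finally, to justify the names and the last assertion, I would note $\pi_1\varphi_{11} = \pi_1\varphi_1 = \varphi_1 \neq 0$, so $L_{11} = \varphi_{11}\H$ is transverse to $L$ and therefore $f_{11}\neq f$; combined with the previous proposition (where the Bianchi--type sections all lie in $L$, giving $f_{12}=f$), this shows $\varphi_{11}$ is not a Bianchi--type section, so $f_{11}$ is a genuinely new, Sym--type two--step Darboux transform. It remains to remark that $\varphi_{11}$, being obtained by applying $\tfrac{\partial}{\partial\lambda}$ to the family $\varphi_1^\lambda$ (up to the algebraic correction), is indeed of ``Sym type'' in the sense of \cite{sym_soliton_1985}, which completes the proof.
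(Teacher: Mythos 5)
Your proposal is correct and follows essentially the same route as the paper: the paper's argument is precisely the computation preceding the theorem statement — gauging by $r_\lambda$, taking the L'H\^opital limit of $\sigma_\varrho(\lambda)\pi\varphi_1^\lambda$ to get $\varphi_{11}=\varphi_1-\varrho\pi\varphid$, differentiating $\pi d_\lambda\varphi_1^\lambda=0$ at $\lambda=\varrho$ to obtain $\eta\varphi_1=-\pi(d_\varrho\pi\varphid)$, and then expanding $d^1_\varrho\varphi_{11}$ directly — and your verification that $\pi_1\varphi_{11}=\varphi_1\neq0$ matches the paper's justification that $f_{11}\neq f$. No gaps.
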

\begin{rem} Note that the Sym--type parallel section $\varphi_{11}$,
  and thus the Sym--type Darboux transform $f_{11}$, 
  depends on the choice of the extension $\varphi^\lambda_1$.
\end{rem}

\begin{figure}[H]
	\centering
	\begin{minipage}{0.52\textwidth}
		\centering
		\includegraphics[width=\linewidth]{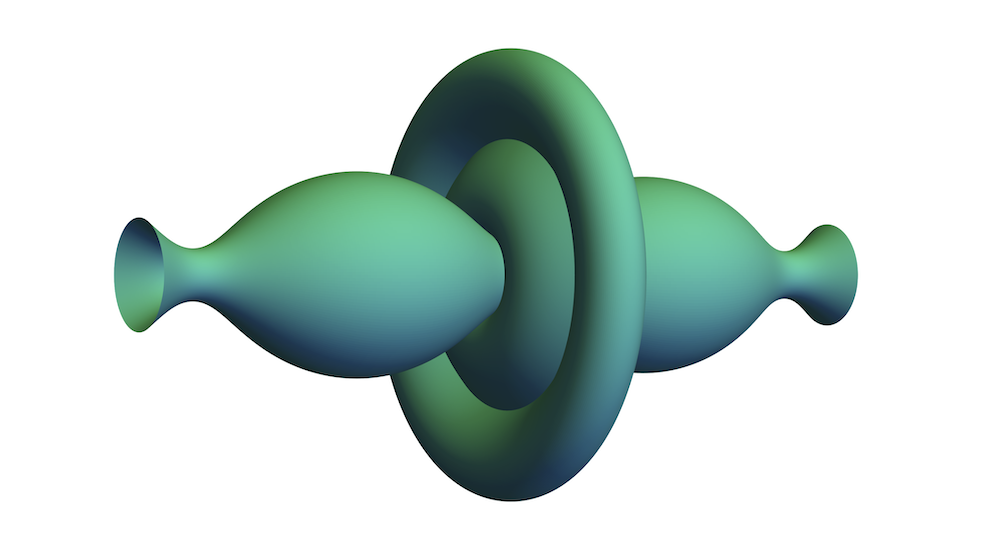}
	\end{minipage}
	\begin{minipage}{0.46\textwidth}
		\centering
		\includegraphics[width=\linewidth]{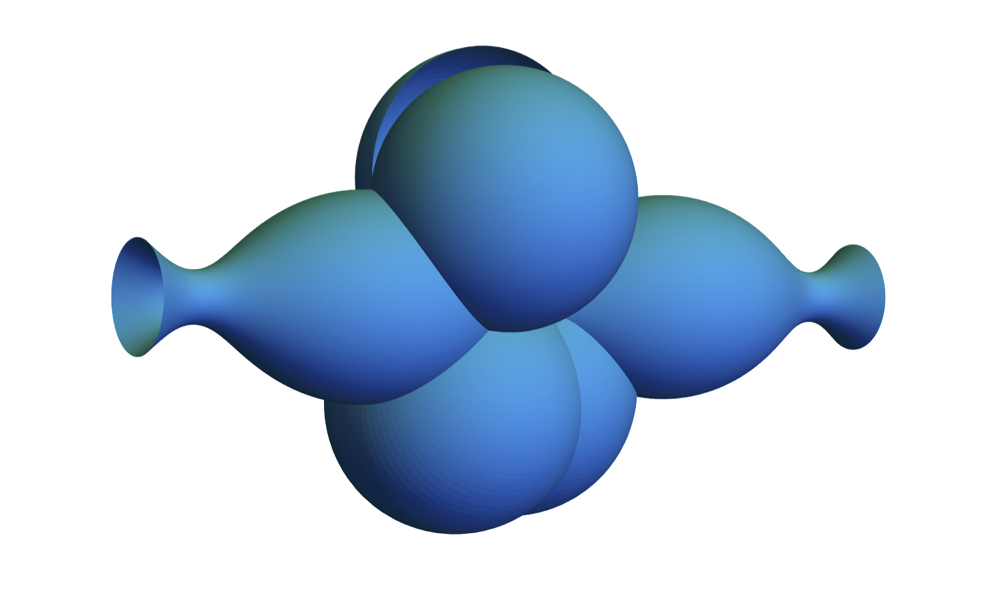}
	\end{minipage}

	\caption{Sym-type Darboux transform of an unduloid $f$. In the
          first case the one--step Darboux transform $f_1$ of $f$ is a surface of
        revolution, in the second case $f_1$ is a CMC bubbleton.}
\label{fig:sym-sor}
\end{figure}

\subsection{A generalisation of Bianchi permutability}

Combining previous results, we are now in the position to give a
generalisation of Bianchi permutability: we obtain for all
$\varrho\in\R_*$ all two--step Darboux transforms of an isothermic
surface $f: M \to S^4$ by parallel sections of the associated family of $f$
without further integration.  

\begin{theorem}
\label{thm:all parallel sections}
Let $f: M \to S^4$ be an isothermic surface and let $d_\lambda$ be the
associated family of $f$. Let $\varrho\in\R_*$ be a spectral parameter
and $f_1$ be a Darboux transform of $f$ given by a $d_\varrho$--parallel section
$\varphi_1 = \varphi_1^\varrho\in\Gamma(\ttrivial 2)$.  Then any
parallel section of the flat connection $d^1_\varrho$ in the
associated family of $f_1$ is either a Sym--type or a Bianchi--type
parallel section.
\end{theorem}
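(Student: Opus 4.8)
The plan is to count dimensions. The connection $d^1_\varrho$ is a flat connection on the rank-$2$ quaternionic bundle $\ttrivial 2$ over the simply connected $\tilde M$, so its space of parallel sections is a quaternionic $2$--dimensional right $\H$--vector space. It therefore suffices to exhibit two $\H$--linearly independent parallel sections, one of Bianchi type and one of Sym type, and show that every parallel section is an $\H$--linear combination of these two; since any such combination is again of one of the two stated forms (a Bianchi--type section is exactly a $d^1_\varrho$--parallel section lying in $\Gamma(L)$, while adding an element of $\Gamma(L)$ to a Sym--type section only changes the choice of extension $\varphi_1^\lambda$), the conclusion follows.

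First I would fix $\varphi_2 = \varphi_2^\varrho$ a $d_\varrho$--parallel section independent of $\varphi_1$ over $\H$; the Bianchi--type section $\varphi_{12} = \pi\varphi_2$ is $d^1_\varrho$--parallel by the Proposition, and it spans $\Gamma(L)$ among parallel sections. Next, choosing a smooth extension $\lambda \mapsto \varphi_1^\lambda$ of $d_\lambda$--parallel sections with $\varphi_1^{\lambda=\varrho}=\varphi_1$ (which exists since $d_\lambda$ depends smoothly on $\lambda$ and parallel transport does too), Theorem \ref{thm:sym type} gives the Sym--type section $\varphi_{11} = \varphi_1 - \varrho\pi\varphid$, which is $d^1_\varrho$--parallel. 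These two are $\H$--independent: applying $\pi_1$ gives $\pi_1\varphi_{12}=0$ but $\pi_1\varphi_{11}=\varphi_1\neq 0$, so $\varphi_{11}\notin\Gamma(L)=\Span_\H\{\varphi_{12}\}$. Hence $\{\varphi_{11},\varphi_{12}\}$ is a basis of the $2$--dimensional space of $d^1_\varrho$--parallel sections.

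Finally, given an arbitrary $d^1_\varrho$--parallel section $\psi$, write $\psi = \varphi_{11}a + \varphi_{12}b$ with $a,b\in\H$ (possible since the two span). If $a=0$ then $\psi=\varphi_{12}b\in\Gamma(L)$ is Bianchi--type. If $a\neq 0$, then $\psi a\invers = \varphi_{11} + \varphi_{12}(ba\invers)$, and I would observe that $\varphi_{11} + \varphi_{12}(ba\invers)$ is again a Sym--type section: since $\varphi_{12}=\pi\varphi_2$, modifying $\varphi_1^\lambda$ by adding a smooth family of $d_\lambda$--parallel sections vanishing at $\varrho$ and proportional to $\varphi_2^\lambda$ changes $\varrho\pi\varphid$ precisely by a quaternionic multiple of $\pi\varphi_2$, so every section of the form $\varphi_1 - \varrho\pi\varphid + \pi\varphi_2 c$ is realised as $\varphi_{11}$ for an appropriate extension; then $\psi = (\varphi_{11}')a$ is, up to the right $\H$--scaling inherent in the definition of a Darboux transform, a Sym--type section. (Scaling $\varphi_{11}$ on the right by $a\in\H_*$ does not affect the associated line $\varphi_{11}\H$, so $f_{11}$ is unchanged.)

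The main obstacle is the bookkeeping in the last paragraph: making precise that \emph{every} quaternionic shift of the Sym--type section by an element of $\Gamma(L)$ is again genuinely of Sym type for a legitimate choice of extension $\varphi_1^\lambda$, and that right $\H$--scaling is harmless. This requires checking that the map "extension $\mapsto$ $\varphi_{11}$" surjects onto $\varphi_1 + \Gamma(L)$-translates, which reduces to the freedom of adding $\varphi_2^\lambda c(\lambda)$ with $c(\varrho)=0$; once that is in hand, the dimension count closes the argument cleanly.
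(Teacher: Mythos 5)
Your argument is correct and is essentially the proof in the paper: the paper likewise takes $\{\varphi_{11},\varphi_{12}\}$ as an $\H$--basis of the $d^1_\varrho$--parallel sections (using $\pi_1\varphi_{11}=\varphi_1\neq 0$ versus $\pi_1\varphi_{12}=0$) and realises an arbitrary combination $\varphi_{11}m+\varphi_{12}m_2$ as the Sym--type section of the modified extension $\tilde\varphi_1^\lambda=\varphi_1^\lambda m+\varphi_2^\lambda m_2\frac{\varrho-\lambda}{\varrho}$, which is exactly your ``add $\varphi_2^\lambda c(\lambda)$ with $c(\varrho)=0$'' freedom combined with absorbing the right $\H$--scalar into the extension at the start rather than at the end. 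The only cosmetic difference is that the paper builds the scalar $m$ into the initial value $\tilde\varphi_1^{\lambda=\varrho}=\varphi_1 m$ (which spans the same line $L_1$), so no separate remark about right scaling is needed.
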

\begin{proof}
Choose a smooth
extension $\varphi_1^\lambda$ of $\varphi_1$ near $\lambda=\varrho$ and
let $\varphi_{11}=\varphi_1-\varrho\pi\varphid$ be the corresponding
Sym--type parallel section. Moreover, let $\varphi_2$ be a $d_\varrho$--parallel section with $\varphi_{12} =
\pi\varphi_2\not=0$. Since $\pi_1\varphi_{11} =\varphi_1\not=0$, we see
that $\varphi_{11}, \varphi_{12}$ are $\H$--independent parallel
sections of $d_\varrho^1$. 

Now let $\hat \varphi\in\Gamma(\ttrivial 2)$ be an arbitrary $d^1_\varrho$--parallel
section. We first show that $\hat \varphi$ is a Bianchi--type parallel
section if $\pi_1 \hat\varphi=0$. In this case,
$\hat\varphi\in\Gamma(L)$  and since both $\hat \varphi$ and
$\varphi_{12}$ are non--vanishing $d^1_\varrho$--parallel sections of
the line bundle $L$  we have
\[
\hat \varphi = \varphi_{12} m, \quad m\in\H_*\,,
\]
But then $\hat \varphi =\pi(\varphi_2m)$ is a Bianchi--type parallel
section. We can therefore now assume that $\pi_1\hat\varphi\not=0$ so that
\[
\pi_1 \hat \varphi = \varphi_1 m, m\in\H_*\,.\]
We aim to show that
$\hat\varphi$ is a Sym--type Darboux transform of $f$. Therefore, we have to
find a smooth extension $\tilde\varphi_1^\lambda$  near $\lambda=\varrho$ so
that  $\hat\varphi$ is its associated Sym--type parallel section, that is, 
\[
\hat \varphi = \tilde \varphi_{11} = \tilde\varphi_1^{\lambda=\varrho} -\varrho\pi(\frac \partial{\partial
    \lambda}\tilde\varphi_1^\lambda)|_{\lambda=\varrho} \,.
\]
 
Since  $\varphi_{11}, \varphi_{12}$ are linearly
independent over $\H$ we can write
\[
\hat \varphi = \varphi_{11} m_1 + \varphi_{12} m_2, \quad m_1,
m_2\in\H\,.
\]
Since $\pi\hat\varphi=\varphi_1m$ and $\pi\varphi_{11} = \varphi_1$ we
see that $m_1=m$.  Extend $\varphi_2$ to   $d_\lambda$--parallel
sections $\varphi_2^\lambda$ which depend smoothly on  $\lambda$ 
near $\lambda=\varrho$ and put
\[
\tilde \varphi_1^\lambda  = \varphi_1^\lambda m+
\varphi_2^\lambda m_2\frac{\varrho-\lambda}\varrho\,.
\]
Then $\tilde \varphi_1^\lambda$ depends smoothly on $\lambda$ near
$\lambda=\varrho$. Moreover,
since $\varphi_1^\lambda, \varphi_2^\lambda$ are $d_\lambda$--parallel
and $\frac{\varrho-\lambda}\varrho\in\C$ is constant for fixed
$\lambda$, we see that $\tilde\varphi_1^\lambda$ is 
$d_\lambda$--parallel. At $\lambda =\varrho$ we have
\[
\tilde \varphi_1^\varrho= \varphi_1 m
\]
and the associated Sym--type parallel section is
\begin{align*}
\tilde \varphi_{11} &= \tilde\varphi_1^{\lambda=\varrho} -\varrho\pi(\frac \partial{\partial
    \lambda}\tilde\varphi_1^\lambda)|_{\lambda=\varrho}  \\
&
 =\varphi_1m  -
 \varrho\pi\Big( \varphid m -
\varphi_2m_2\frac 1{\varrho} + (\frac{\partial}{\partial
  \lambda}\varphi_2^\lambda)m_2\frac{\varrho-\lambda}\varrho|_{\lambda=\varrho}\Big)
\\
&= \Big(\varphi_1^{\lambda=\varrho} - \varrho\pi \varphid\Big) m +
  \pi\varphi_2m_2
= \varphi_{11}m + \varphi_{12}m_2\\
& = \hat\varphi.
\end{align*}
This concludes the proof.
\end{proof}

This immediately gives a
generalisation of  Bianchi permutability, Theorem 
\ref{thm:bianchi}:

\begin{theorem}[generalised Bianchi permutability]
\label{thm: generalised Bianchi}
Let $f: M\to S^4$ be isothermic and $f_1$ be a Darboux transform of
$f$ given by the spectral parameter $\varrho_1\in\R_*$ and the
$d_{\varrho_1}$--parallel section $\varphi_1\in\Gamma(\ttrivial 2)$.
Then all Darboux transforms of $f_1$ are either Sym--type or
Bianchi--type two--step Darboux transforms of $f$.

In particular, all Darboux transforms are given by parallel sections
$\varphi^\lambda\in\Gamma(\ttrivial 2)$ of the associated family
$d_\lambda$ of $f$ via algebraic operations and differentiation with
respect to the spectral parameter $\lambda$.
\end{theorem}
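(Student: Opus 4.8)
The plan is to deduce Theorem \ref{thm: generalised Bianchi} almost immediately from Theorem \ref{thm:all parallel sections} together with the definition of the Darboux transformation. First I would recall that a Darboux transform of $f_1$ with respect to the only remaining spectral parameter that is not yet covered by Bianchi permutability, namely $\varrho:=\varrho_1$, is by definition of the form $\hat L = \hat\varphi\H$ where $\hat\varphi$ is a $d^1_\varrho$--parallel section of $\ttrivial 2$ (here $d^1_\lambda$ is the associated family of $f_1$), subject to $\hat L(p)\neq L_1(p)$; for every other parameter $\varrho_2\neq\varrho_1$ the classical Bianchi permutability, Theorem \ref{thm:bianchi}, already expresses the Darboux transform of $f_1$ algebraically through parallel sections of $d_\lambda$. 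So the only case needing an argument is $\varrho_2=\varrho_1$, and there Theorem \ref{thm:all parallel sections} says precisely that any such $\hat\varphi$ is either a Bianchi--type section $\pi\varphi_2$ (with $\varphi_2$ a $d_\varrho$--parallel section of $f$) or a Sym--type section $\varphi_{11}=\varphi_1^{\lambda=\varrho}-\varrho\pi\varphid$ coming from differentiating a smooth family of $d_\lambda$--parallel sections.

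Concretely, the steps I would carry out are: (1) fix the spectral parameter $\varrho_2$ of the sought Darboux transform of $f_1$ and split into the two cases $\varrho_2\neq\varrho_1$ and $\varrho_2=\varrho_1$; (2) in the first case invoke Theorem \ref{thm:bianchi} directly — the common Darboux transform is $\varphi_{12}=\varphi_2-\varphi_1\chi$ with $d\varphi_2=d\varphi_1\chi$, an algebraic expression in the $d_{\varrho_1}$-- and $d_{\varrho_2}$--parallel sections of $f$; (3) in the second case apply Theorem \ref{thm:all parallel sections} to conclude that the defining $d^1_\varrho$--parallel section $\hat\varphi$ is Bianchi--type or Sym--type, hence either $\pi\varphi_2$ for some $d_\varrho$--parallel $\varphi_2$ of $f$ or $\varphi_1-\varrho\pi\varphid$ for a smooth extension $\varphi_1^\lambda$ of $\varphi_1$; (4) observe that in both descriptions the section, and therefore the line bundle $\hat L=\hat\varphi\H$ and the surface $f_{11}$, is obtained from parallel sections of $d_\lambda$ of $f$ by the projection $\pi$ (an algebraic operation, since $\pi$ is determined by $L$ and $L_1=\varphi_1\H$) and, in the Sym--type case, by $\frac{\partial}{\partial\lambda}$ at $\lambda=\varrho$; (5) assemble these observations into the two assertions of the theorem. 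I would also note in passing that this genuinely extends Theorem \ref{thm:bianchi}: the classical statement is recovered as the Bianchi--type part, and the Sym--type part supplies exactly the two--step Darboux transforms with $\varrho_1=\varrho_2$ which classical permutability misses (cf. Remark \ref{rem:doublebianchi}).

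There is essentially no hard step remaining here, since all the real work is in Theorem \ref{thm:all parallel sections} and Theorem \ref{thm:sym type}; this final theorem is a packaging statement. The only point requiring a little care is the bookkeeping of \emph{which} parameter is used at each stage — one must be careful to state that ``all Darboux transforms of $f_1$'' means Darboux transforms with respect to every admissible parameter $\varrho_2\in\R_*$, and that only the coincident case $\varrho_2=\varrho_1$ uses the Sym--type construction while all others are handled by the classical algebraic permutability. A secondary subtlety is that the Darboux transform is only required to satisfy $\hat L(p)\neq L_1(p)$ generically; for the non-generic (singular) case one simply allows singular Darboux transforms as in Remark following the Definition, and the same parallel-section description holds verbatim. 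Thus the proof is: combine Theorem \ref{thm:bianchi} for $\varrho_2\neq\varrho_1$ with Theorem \ref{thm:all parallel sections} for $\varrho_2=\varrho_1$, and read off the algebraic-plus-differentiation description from the explicit forms of Bianchi--type and Sym--type parallel sections.
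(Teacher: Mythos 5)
Your proposal is correct and matches the paper's treatment: the paper gives no separate argument for this theorem, presenting it as an immediate consequence of Theorem \ref{thm:all parallel sections} (for coincident spectral parameters) together with the classical permutability of Theorem \ref{thm:bianchi} (for distinct ones), which is exactly the case split you describe. Your additional bookkeeping remarks about which parameter is used at each stage are consistent with the paper's intent.
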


Denoting by $f_{11}$ the Sym--Darboux transform given by a Sym--type
parallel section $\varphi_{11}$ and by $f_{12}$ a Darboux transform
given by Bianchi permutability by a Bianchi--type parallel section
$\varphi_{12}$ we  see the following picture:
\[
\diagram
 & && & f_{11}\\
 & & f_1 \drrto^{\varphi_{12}^{\varrho_2}} \urrto^{\varphi_{11}^{\varrho_1}}& & \\
f\urrto^{{\varphi_1^{\varrho_1}}}\drrto_{\varphi_2^{\varrho_2}} & && &
f_{12}= f_{21}\\
 & &f_2\urrto_{\varphi_{21}^{ \varrho_1}} & &
\enddiagram
\]

\begin{rem} Note that the previous theorem now allows to construct all
  Darboux transforms (of any order) of an isothermic surface $f$ from
  parallel sections of the associated family $d_\lambda$ of $f$
  without further integration. 
\end{rem}

\subsection{Closing conditions} 

We now investigate the closing condition for a two--step Darboux
transform of an isothermic surface $f: M \to S^4$. 

For $\varrho_i\in\R, i=1,2,$ let $\varphi_i=\varphi_i^{\varrho_i}$ be
$d_{\varrho_i}$--parallel sections of the associated family of flat
connections $d_\lambda$ of $f$. Assume that $\varphi_i$ have
multipliers $h_i\in\C$, that is, $\gamma^*\varphi_i = \varphi_i
h_i(\gamma)$ for all $\gamma\in \pi_1(M)$. Then both associated
Darboux transforms $f_i: M \to S^4$ are closed surfaces. The function $\chi$
defined by 
$d\varphi_2 = d\varphi_1\chi$ satisfies $\chi^*=h_1\invers \chi h_2$
so that
\[
\varphi_{12} = \varphi_2-\varphi_1\chi
\]
has multiplier $h_2$.  In particular, we see that  the two--step Darboux transforms, which are obtained by
 Bianchi permutability from closed Darboux transforms,  are closed too:

\begin{prop}
Let $f: M \to S^4$ be an isothermic surface and $f_i: M \to S^4$,
$i=1, 2$,  be  closed Darboux transforms of $f$, with $f_1(p)\not=f_2(p)$ for
all $p$. Then the common Darboux
transform of $f_1$ and $f_2$ is closed too.
\end{prop}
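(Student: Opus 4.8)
The plan is to reduce the statement to the multiplier computation already sketched in the paragraph preceding it, carried out now for (possibly quaternionic) multipliers rather than just complex ones. I would start by recording that, by the characterisation of closedness recalled above, each $f_i$ being closed is equivalent to $\varphi_i=\varphi_i^{\varrho_i}$ being a section with multiplier: there exist $h_i(\gamma)\in\H_*$ with $\gamma^*\varphi_i=\varphi_i\,h_i(\gamma)$ for all $\gamma\in\pi_1(M)$ (for abelian $\pi_1(M)$ one could further normalise $h_i(\gamma)\in\C_*$, but this will not be needed). The assumption $f_1(p)\neq f_2(p)$ for all $p$ is exactly what permits invoking Theorem~\ref{thm:bianchi}, which yields the common Darboux transform $f_{12}=\varphi_{12}\H$ with $\varphi_{12}=\varphi_2-\varphi_1\chi$, where $\chi\colon\tilde M\to\H$ is determined by $d\varphi_2=d\varphi_1\chi$.

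The key step is to determine how $\chi$ transforms under deck transformations. Applying $\gamma^*$ to $d\varphi_2=d\varphi_1\chi$ and using that each $h_j(\gamma)$ is constant (so that $\gamma^*(d\varphi_j)=(d\varphi_j)\,h_j(\gamma)$), I would obtain the $L$--valued $1$--form identity
\[
(d\varphi_1)\,\chi\,h_2(\gamma)=(d\varphi_1)\,h_1(\gamma)\,\gamma^*\chi .
\]
Cancelling $d\varphi_1$ on the left — the very cancellation that makes $\chi$ well defined, see Remark~\ref{rem:doublebianchi} — gives $\gamma^*\chi=h_1(\gamma)\invers\,\chi\,h_2(\gamma)$. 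From here the closing condition for $\varphi_{12}$ follows in two lines:
\begin{align*}
\gamma^*\varphi_{12} &= \gamma^*\varphi_2-(\gamma^*\varphi_1)(\gamma^*\chi)
= \varphi_2\,h_2(\gamma)-\varphi_1\,h_1(\gamma)\,h_1(\gamma)\invers\chi\,h_2(\gamma)\\
&= (\varphi_2-\varphi_1\chi)\,h_2(\gamma)=\varphi_{12}\,h_2(\gamma),
\end{align*}
so $\varphi_{12}$ is again a section with multiplier $h_2$; hence $f_{12}=\varphi_{12}\H$ descends to $M$, i.e.\ is closed.

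The one point that needs genuine care — and the main, if minor, obstacle — is the cancellation of $d\varphi_1$ in the displayed $1$--form identity, i.e.\ deducing from $(d\varphi_1)\bigl(\chi\,h_2(\gamma)-h_1(\gamma)\,\gamma^*\chi\bigr)=0$ that the quaternion--valued function $\chi\,h_2(\gamma)-h_1(\gamma)\,\gamma^*\chi$ vanishes identically on $\tilde M$. Since $d\varphi_1=-\varrho_1\,\eta\varphi_1$ with $\varrho_1\in\R_*$ and $\varphi_1(p)\notin L(p)$ for all $p$, the $1$--form $d\varphi_1$ vanishes at most on a discrete subset of $\tilde M$; left cancellation then holds on an open dense set and the identity extends by continuity. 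With that settled, everything else is formal, and the proof is short.
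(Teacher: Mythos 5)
Your proposal is correct and follows essentially the same route as the paper: the paper's argument is precisely that $\chi$ transforms as $\gamma^*\chi=h_1\invers\chi h_2$, whence $\varphi_{12}=\varphi_2-\varphi_1\chi$ has multiplier $h_2$. Your additional justification of the cancellation of $d\varphi_1$ (using $d\varphi_1=-\varrho_1\eta\varphi_1$ and the absence of zero divisors in $\H$) is a detail the paper leaves implicit, and is sound.
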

\begin{rem} This result holds trivially when $\varrho_1=\varrho_2$: in
  this case the Bianchi--type two--step Darboux transforms are $f =
  f_{12} = f_{21}$.
\end{rem}

Consider now the remaining case when $\varrho:=\varrho_1=\varrho_2$ and the
Darboux transform $f_{11}$ of $f_1$ is given by a Sym--type parallel
section, that is, it is given by
$\varphi_{11}= \varphi_1 - \varrho\pi \varphid$ where
$\varphi_1^\lambda$ is $d_\lambda$--parallel near $\lambda=\varrho$
and $\varphi_1^{\lambda=\varrho} =\varphi_1$.  If $\varphi_1^\lambda$
is a section with multiplier $h_1^\lambda$ for all $\lambda$ near
$\varrho$ then
\[
\gamma^*\pi \varphid = \pi(\varphid h_1^{\lambda=\varrho} + \varphi_1
(\frac \partial{\partial \lambda}h_1^\lambda)|_{\lambda=\varrho}) =
\pi \varphid h_1
\]
and thus $\varphi_{11} = \varphi_1 - \pi \varphid$ has the same
multiplier $h_1$ as $\varphi_1$. In particular, the resulting Darboux
transform $f_{11}$ of $f_1$ is closed.

We summarise:

\begin{theorem}
\label{thm: sym-type-closed}
Let $f: M \to S^4$ be isothermic and $f_1: M \to S^4$ a Darboux transform given
by the $d_\varrho$--parallel section $\varphi_1$. 
 A Sym--type Darboux transform $f_{11}$ given by an extension
 $\varphi_1^\lambda$  of $\varphi_1$ is closed if 
 $\varphi_1^\lambda$ is a section with multiplier near
 $\lambda=\varrho$. 
\end{theorem}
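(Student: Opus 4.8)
The plan is to reduce the statement to the closing criterion recalled in Section~\ref{sec:background}: a Darboux transform $\hat f = \varphi\H$ is closed precisely when $\varphi$ is a section with multiplier, i.e. $\gamma^*\varphi = \varphi\,h(\gamma)$ for all $\gamma\in\pi_1(M)$ with $h\colon\pi_1(M)\to\H_*$. So it suffices to exhibit a multiplier for the Sym--type section $\varphi_{11} = \varphi_1 - \varrho\,\pi\varphid$, and the natural candidate is the multiplier $h_1$ of $\varphi_1$ itself. Two equivariance observations set this up. First, since $\varphi_1$ has multiplier $h_1$, the line bundle $L_1 = \varphi_1\H$ is invariant under the deck transformations of $\tilde M \to M$; as $L$ is pulled back from $M$, the splitting $\trivial 2 = L_1\oplus L$ is equivariant, hence so is the projection $\pi$ onto $L$ along this splitting, that is $\gamma^*\pi = \pi$. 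Second, $\pi\varphi_1 = 0$ because $\varphi_1\in\Gamma(L_1)$.

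Next I would differentiate the multiplier relation of the extension. By hypothesis each $\varphi_1^\lambda$ is a section with multiplier, $\gamma^*\varphi_1^\lambda = \varphi_1^\lambda h_1^\lambda(\gamma)$; since $\varphi_1^\lambda$ is a parallel, hence nowhere--vanishing, section depending smoothly on $\lambda$, the function $\lambda\mapsto h_1^\lambda(\gamma)$ is smooth near $\varrho$, and differentiating at $\lambda=\varrho$ gives
\[
\gamma^*\varphid = \varphid\,h_1(\gamma) + \varphi_1\,(\tfrac{\partial}{\partial\lambda}h_1^\lambda(\gamma))|_{\lambda=\varrho}.
\]
Applying $\pi$ and using $\gamma^*\pi = \pi$ together with $\pi\varphi_1 = 0$ yields $\gamma^*(\pi\varphid) = \pi\varphid\,h_1(\gamma)$. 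Combining this with $\gamma^*\varphi_1 = \varphi_1 h_1(\gamma)$ we obtain $\gamma^*\varphi_{11} = \varphi_{11}\,h_1(\gamma)$, so $\varphi_{11}$ carries the multiplier $h_1$ and $f_{11} = \varphi_{11}\H$ is closed.

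This argument is largely bookkeeping, so there is no genuine obstacle; the only points needing care are that $\pi$ is determined by $f_1$ alone and does not depend on the chosen extension $\varphi_1^\lambda$, so that differentiation in $\lambda$ acts only on $\varphid$, and that the multiplier $h_1^\lambda(\gamma)$ really does vary smoothly with $\lambda$ --- which is guaranteed since a parallel section of a flat bundle over $\tilde M$ that vanishes somewhere vanishes identically, making $\varphi_1^\lambda$ nowhere zero and hence part of a smooth local frame.
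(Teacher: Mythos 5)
Your proof is correct and follows essentially the same route as the paper: differentiate the multiplier relation $\gamma^*\varphi_1^\lambda = \varphi_1^\lambda h_1^\lambda$ at $\lambda=\varrho$, apply the (deck--equivariant) projection $\pi$, and use $\pi\varphi_1=0$ to conclude that $\varphi_{11}$ inherits the multiplier $h_1$. You in fact supply slightly more detail than the paper does — the equivariance of $\pi$ via the invariance of $L$ and $L_1$, and the smoothness of $\lambda\mapsto h_1^\lambda(\gamma)$ — both of which are left implicit there.
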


We now investigate cases where we can guarantee existence of
closed two--step Darboux transforms in terms of the behaviour of the
holonomy of $d_\lambda$.

\begin{cor}
  Let $f: M \to S^4$ be isothermic and $d_\lambda$ its associated
  family of flat connections. If $\varrho\in\R_*$ is a spectral
  parameter such that there are four distinct complex multipliers of
  the holonomy of $d_\varrho$, then
  every closed Darboux transform $f_1$ has exactly two closed Darboux
  transforms with parameter $\varrho$.
\end{cor}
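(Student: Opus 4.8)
The plan is to analyse the holonomy representation of $d^1_\varrho$, the flat connection of $f_1$ at the distinguished parameter $\varrho$, by transporting the holonomy of $d_\varrho$ through the simple factor dressing gauge $r_\lambda$ and its limit at $\lambda=\varrho$. Let $\varrho\in\R_*$ be such that the holonomy of $d_\varrho$ has four distinct complex multipliers. Since multipliers of a quaternionic connection come in pairs $(h,\bar h)$, these four multipliers form two conjugate pairs $\{h,\bar h\}$ and $\{k,\bar k\}$ with $h,k\notin\R$ and $h\ne k, h\ne\bar k$. Thus there is a $d_\varrho$--parallel section $\varphi_1$ with multiplier $h$ generating $f_1$, and a $d_\varrho$--parallel section $\varphi_2$, $\H$--independent of $\varphi_1$, with multiplier $k$. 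By Theorem~\ref{thm:all parallel sections}, every $d^1_\varrho$--parallel section is either the Bianchi--type section $\varphi_{12}=\pi\varphi_2$ or a Sym--type section $\varphi_{11}=\varphi_1-\varrho\pi\varphid$ for some smooth extension $\varphi_1^\lambda$; these two are $\H$--independent and span the $d^1_\varrho$--parallel sections.

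First I would handle the Bianchi--type section. As computed in the discussion preceding this corollary, if $\varphi_2$ has multiplier $k$ then $\varphi_{12}=\varphi_2-\varphi_1\chi$ has multiplier $h_2=k$ (here using that $\chi^*=h\invers\chi k$), so $f_{12}=\varphi_{12}\H=f$ is a closed Darboux transform of $f_1$ with parameter $\varrho$. Second I would produce a closed Sym--type section: I must exhibit an extension $\varphi_1^\lambda$ of $\varphi_1$ that is a section with multiplier near $\lambda=\varrho$, for then Theorem~\ref{thm: sym-type-closed} gives that $\varphi_{11}$ has multiplier $h$ and $f_{11}$ is closed. Because the holonomy depends smoothly (indeed analytically) on $\lambda$ and $h$ is a \emph{simple} eigenvalue of the holonomy at $\varrho$ (the four multipliers are distinct, so each occurs with multiplicity one), the corresponding eigenline varies smoothly in $\lambda$; choosing a smooth family of eigenvectors gives $d_\lambda$--parallel sections $\varphi_1^\lambda$ with multipliers $h^\lambda$, as required. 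This yields at least two closed Darboux transforms of $f_1$, namely $f_{12}=f$ and $f_{11}$, and they are distinct since $\pi_1\varphi_{11}=\varphi_1\ne0$ while $\pi_1\varphi_{12}=0$.

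Finally I would show there are \emph{exactly} two. Let $\hat f$ be any closed Darboux transform of $f_1$ with parameter $\varrho$, given by a $d^1_\varrho$--parallel section $\hat\varphi$ with multiplier $g\in\C_*$. Writing $\hat\varphi=\varphi_{11}m_1+\varphi_{12}m_2$ with $m_i\in\H$, the closing condition forces a compatibility between $g$ and the multipliers $h,k$: applying $\gamma^*$ and using $\gamma^*\varphi_{11}=\varphi_{11}h$, $\gamma^*\varphi_{12}=\varphi_{12}k$ gives $h m_1 g\invers=m_1$ and $k m_2 g\invers=m_2$ for all $\gamma$. Since $h\ne k$ and $h\ne\bar k$, the two conditions $h m_1=m_1 g$ and $k m_2=m_2 g$ cannot both hold with $m_1,m_2$ both nonzero (a nonzero $m\in\H$ with $a m=m b$ forces $b$ conjugate to $a$, hence $g\sim h$ and $g\sim k$ simultaneously, impossible). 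So either $m_1=0$, giving $\hat f=f_{12}=f$, or $m_2=0$, giving $\hat\varphi=\varphi_{11}m_1$ and $\hat f=f_{11}$. Hence $f_1$ has exactly the two closed Darboux transforms $f$ and $f_{11}$ with parameter $\varrho$.

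I expect the main obstacle to be the smoothness of the eigenline in the second step: one needs that a simple multiplier of the holonomy persists as a simple multiplier and varies smoothly with $\lambda$, which follows from smooth (analytic) dependence of the holonomy on $\lambda$ together with simplicity, but the quaternionic bookkeeping — keeping track of which complex eigenvalues pair up under conjugation and ensuring the chosen family of eigenvectors is genuinely $\C$--linear in the relevant sense — requires care. The uniqueness argument in the third step is then essentially the standard fact that $am=mb$ with $m\ne0$ forces $a$ and $b$ to be conjugate quaternions, applied to the distinctness of the four multipliers.
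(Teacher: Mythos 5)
Your proof is correct and follows essentially the same route as the paper's: a smooth extension of the eigenline (possible because the multiplier is simple) gives the closed Sym--type transform $f_{11}$, the Bianchi--type section gives $f$, and the distinctness of the multipliers rules out any further closed transform — your final step with $am=mb$ forcing conjugacy just spells out what the paper compresses into ``since $h_1\not=h_2$ we cannot have further closed Darboux transforms.'' The one point you gloss over is why none of the four multipliers is real (pairing under conjugation alone does not exclude, say, two real multipliers among the four); the paper closes this by observing that a real multiplier $h$ would admit the two complex--independent eigensections $\varphi$ and $\varphi j$, hence a complex two--dimensional eigenspace, which is incompatible with four distinct eigenvalues on $\C^4$.
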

\begin{rem}
  Homogeneous
  tori are  examples of  isothermic surfaces which have exactly four
  distinct complex multipliers: we will return to this topic in a
  future paper.  
\end{rem}
\begin{proof}
  If one of the multipliers is real then there exist two complex
  independent parallel sections $\varphi, \varphi_j$ with the same
  multiplier which contradicts the assumption that the holonomy has 4
  distinct eigenvalues with complex one--dimensional eigenspaces.
  
  Since complex multipliers appear as pairs of conjugate complex multipliers
  we have exactly two $d_\varrho$--parallel sections
  $\varphi_1, \varphi_2$ with complex multiplier $h_1$ and $h_2$,
  $h_1\not=h_2$,
  respectively which are $\H$--independent. Thus, all multipliers are
  given by $\{h_1, \bar h_1, h_2, \bar h_2\}$.

  Since $f_1$ is closed, it is given by one of these parallel
  sections, say $\varphi_1$. The multipliers depend smoothly on the
  spectral parameter and since there are four distinct multipliers for
  $\lambda $ near $\varrho$, we can extend $\varphi_1$ around
  $\varrho$ to a smooth family of $d_\lambda$--parallel sections $\varphi_1^\lambda$ with multipliers $h_1^\lambda$ .
  Then the Sym--type formula shows that $\varphi_{11}$ is a section
  with multiplier $h_1$ and $f_{11}$ is closed. Since $f_{11} \not= f$
  we obtain the second closed Darboux transform from Bianchi
  permutability and the parallel section $\varphi_2$.  Since
  $h_1\not=h_2$ we cannot have further closed Darboux transforms of
  $f_1$.
\end{proof}

\[
\diagram
 & && & f_{11}\\
 & & f_1 \drrto_{(\varphi_{12}^{\varrho}, h_2)}
 \urrto^{(\varphi_{11}^{\varrho}, h_1)}& & \\
f\urrto^{(\varphi_1^{\varrho}, h_1)}\drrto_{(\varphi_2^{\varrho}, h_2)} & && &
f=f_{12}= f_{21}\\
 & &f_2
 & &
\enddiagram
\]

\begin{cor}
  Let $f: M \to S^4$ be isothermic and $d_\lambda$ its associated
  family of flat connections. Assume that there are two
  $\H$--independent $\varphi_1^\lambda, \varphi_2^\lambda$ with
  multipliers $h^\lambda=h_1^\lambda= h_2^\lambda\in \C\setminus\R$
  for $\lambda$ near $\varrho\in\R_*$. 
  Then every closed Darboux transform $f_1$ of $f$ with parameter
  $\varrho$ has a $\CP^1$--worth of closed Darboux transforms.
\end{cor}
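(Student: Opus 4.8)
The plan is to mimic the proof of the preceding corollary (the four--distinct--multiplier case), but now in the situation where the two multipliers $h_1^\lambda$ and $h_2^\lambda$ have coalesced to a common value $h^\lambda\in\C\setminus\R$ near $\varrho$. First I would fix a closed Darboux transform $f_1$ of $f$ with parameter $\varrho$; by hypothesis $f_1$ is given by some $d_\varrho$--parallel section $\varphi$ which is a section with multiplier, and since the multiplier $h^\varrho$ is non--real, $\varphi$ is (up to $\H$--scaling, using $\gamma^*\varphi=\varphi h$ with $h\in\C$) a complex linear combination of $\varphi_1^\varrho$ and $\varphi_2^\varrho$; without loss of generality we may take $f_1$ to be cut out by $\varphi_1$ after a complex change of basis in the two--dimensional complex space $\Span_\C\{\varphi_1^\lambda,\varphi_2^\lambda\}$ of parallel sections with multiplier $h^\lambda$.

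Next I would produce the $\CP^1$--worth of closed Darboux transforms of $f_1$ explicitly. The key point is that, since $\varphi_1^\lambda$ and $\varphi_2^\lambda$ share the multiplier $h^\lambda$ for all $\lambda$ near $\varrho$, the section $\varphi_1^\lambda$ extends $\varphi_1$ to a smooth family of $d_\lambda$--parallel sections \emph{with multiplier}, so Theorem \ref{thm: sym-type-closed} applies: the Sym--type parallel section $\varphi_{11}=\varphi_1-\varrho\pi\varphid$ has the same multiplier $h^\varrho$ and yields a closed Sym--type Darboux transform $f_{11}$ of $f_1$. Separately, the second parallel section $\varphi_2=\varphi_2^\varrho$, having multiplier $h^\varrho$, is a closed Darboux transform of $f$, and since it has the same spectral parameter $\varrho$ as $\varphi_1$, Remark \ref{rem:doublebianchi} gives that the Bianchi--type two--step Darboux transform $\varphi_{12}=\pi\varphi_2$ is a section in $L$ with multiplier $h^\varrho$, i.e.\ it is again closed (and the underlying surface is $f$). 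Now $\varphi_{11}$ and $\varphi_{12}$ are $\H$--independent (indeed $\pi_1\varphi_{11}=\varphi_1\neq0$ while $\pi_1\varphi_{12}=0$), and both are $d^1_\varrho$--parallel with the \emph{same complex multiplier} $h^\varrho$; therefore any complex linear combination $\varphi_{11}m_1+\varphi_{12}m_2$, $m_1,m_2\in\C$, is a $d^1_\varrho$--parallel section with multiplier $h^\varrho$, hence closed, and by Theorem \ref{thm:all parallel sections} these (together with their $j$--translates, which give the same surfaces) exhaust the parallel sections giving closed Darboux transforms. This produces the desired $\CP^1$ family.

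The routine bookkeeping is: checking that complex rescalings $\varphi_i^\lambda c_i(\lambda)$, $c_i\in\C$, preserve both $d_\lambda$--parallelism and the multiplier property, so that the change of basis in the first paragraph is legitimate and the new $\varphi_1^\lambda$ is still a smooth extension with multiplier; and observing that scaling $\varphi_{11}m_1+\varphi_{12}m_2$ on the right by $\C$ only changes the representative within its $\CP^1$--class, so the family really is parametrised by $\CP^1=\P(\C^2)$. I would also remark, as in the earlier corollary's diagram, that when $m_2\neq0$ one must check $L(p)\neq\widehat L(p)$ to ensure one gets a genuine (possibly singular) Darboux transform; this is automatic away from the zero locus of the relevant section, matching the "possibly singular" caveat used throughout the paper.

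The main obstacle is not any single computation but making sure the two logically distinct constructions --- the Sym--type transform built from $\varphi_1^\lambda$ and the Bianchi--type transform built from $\varphi_2$ --- genuinely land in the \emph{same} eigenspace of the holonomy of $d^1_\varrho$, so that their complex span consists entirely of closed sections. This hinges precisely on the hypothesis $h_1^\lambda=h_2^\lambda$: it is what forces $\varphi_{11}$ and $\varphi_{12}$ to carry the common multiplier $h^\varrho$, and hence what upgrades the abstract $\CP^1$ of Theorem \ref{thm:all parallel sections} to a $\CP^1$ of \emph{closed} Darboux transforms. Once that alignment is in place, the rest is the same packaging as in the four--multiplier corollary.
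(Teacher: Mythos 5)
Your proof is correct and follows essentially the same route as the paper: identify $f_1$ (after a complex change of basis in the $h$--eigenspace) with $\varphi_1^{\lambda=\varrho}$, use the coalesced multiplier to get a closed Sym--type section $\varphi_{11}$ with multiplier $h$ via Theorem \ref{thm: sym-type-closed}, pair it with the Bianchi--type section $\varphi_{12}=\pi\varphi_2$ carrying the same multiplier, and take the complex span, with Theorem \ref{thm:all parallel sections} and the non--resonance of $\varrho$ handling exhaustiveness and the $\bar h$/$j$--translates. The only cosmetic difference is that the paper phrases the normalisation via the diagonalisable holonomy with stable eigenspaces $E^\lambda$ and $E^\lambda j$, which is equivalent to your change--of--basis bookkeeping.
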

\begin{proof}
  Since $h=h^{\lambda=\varrho}\not\in\R$ we see that $\varrho$ is not a resonance
  point. Let $\varphi_1$ be a $d_\varrho$--parallel section with
  multiplier and $f_1$ the Darboux transform given by
  $\varphi_1$. Since multipliers come in pairs of complex conjugates,
  we know that the holonomy of $d_\lambda$ is diagonalisable with
  complex 2--dimensional, $d_\lambda$--stable eigenspaces
  $E^\lambda=\Span_\C\{\varphi_1^\lambda, \varphi_2^\lambda\}$ and
  $E^\lambda j$ with multipliers $h$ and $\bar h$. Therefore, we can
  assume without loss of generality that the $d_\varrho$--parallel
  section $\varphi_1$ has multiplier $h$ by replacing $\varphi_1$ by
  $\varphi_1j$ if necessary. Moreover, we can write
  $\varphi_1 = \varphi_1^{\lambda=\varrho} m_1 +
  \varphi_2^{\lambda=\varrho}m_2$,
  $m_1, m_2\in\C$, and thus can also assume without loss of generality
  that $\varphi_1 =\varphi_1^{\lambda=\varrho}$ by replacing
  $\varphi_1^\lambda$ by
  $\varphi_1^\lambda m_1 + \varphi_2^\lambda m_2$ if necessary. 

The Sym--type parallel section 
\[
\varphi_{11} = \varphi_1 - \varrho \pi \varphid
\]
has multiplier $h$ since
$\gamma^* \varphid = \varphid h + \varphi_1 (\frac{\partial}{\partial
  \lambda} h^\lambda)|_{\lambda=\varrho}$.
Here $\pi$ is the projection onto $L$ along the splitting
$\trivial 2= L \oplus L_1$.

On the other hand, the Bianchi--type
  Darboux transform $f_{12}$ of $f_1$ is given
  $\varphi_{12} = \pi \varphi_2^{\lambda=\varrho}$ which is also a section with multiplier
  $h$.   Thus, any $\C$--linear combination of $\varphi_{11} ,
  \varphi_{12}$ is a $d_\varrho$--parallel section with multiplier
  $h$, and thus we have a $\CP^1$ worth of closed Darboux transforms.
  Since $\varrho$ is not a resonance point, parallel
  sections with multipliers $\bar h$ give the same surfaces. 
\end{proof}

\[
\diagram
 & && & f_{11}\ar@{--}[d]\\
 & & f_1 \drrto_{(\varphi_{12}^{\varrho}, h)}
 \urrto^{(\varphi_{11}^{\varrho},
   h)}\ar@{->}[rr]_{\quad(\varphi^{\varrho}, h), \, m_i\in\C}&&\hat f \ar@{--}[d]& & \\
f\urrto_{(\varphi_1^{\varrho}, h_1)}\drrto_{(\varphi_2^{\varrho}, h)} & && &
f=f_{12}= f_{21}\\
 & &f_2
 & &
\enddiagram
\]

\begin{example}
  This case appears for 
  surfaces of revolution in 3--space: If $\varrho>-\frac 14, \varrho\not=0,$ is not a
  resonance point then a closed Darboux transform $f_1$ with parameter
  $\varrho$ in 3--space is a surface of revolution and so is every
  Darboux transform with parameter $\varrho$ of $f_1$ in 3-space.
\end{example}

\begin{figure}[h]
	\centering
	\begin{minipage}{0.5\textwidth}
		\centering
		\includegraphics[width=\linewidth]{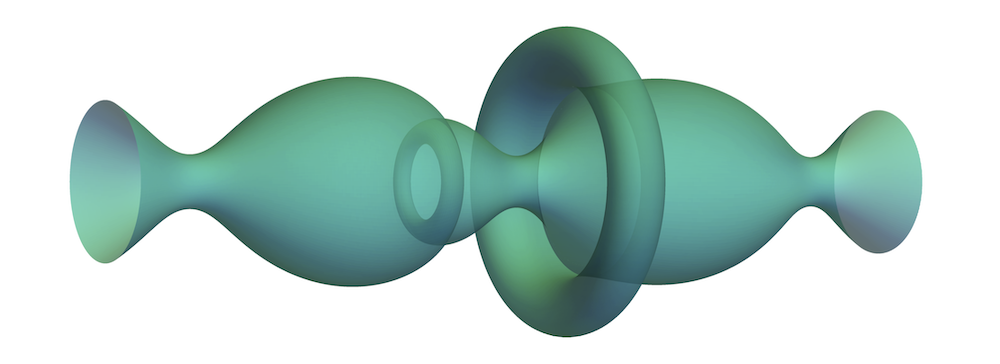}
	\end{minipage}
	\begin{minipage}{0.48\textwidth}
		\centering
		\includegraphics[width=\linewidth]{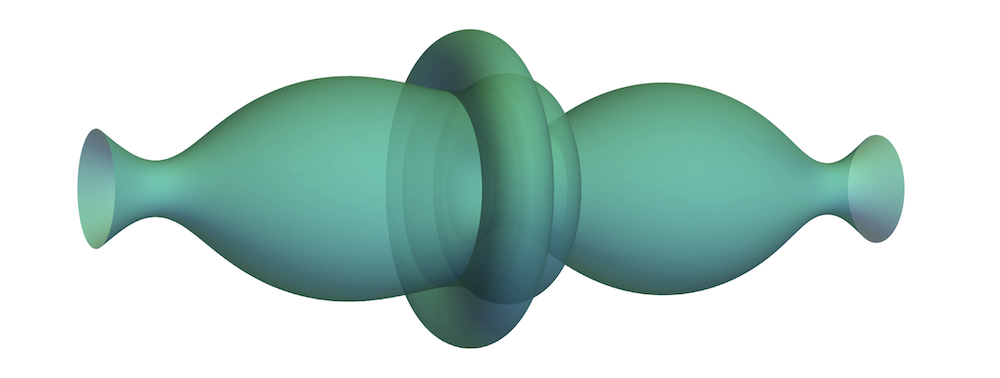}
	\end{minipage}
	\caption{Closed Sym-type Darboux transforms of an unduloid for
          a non--resonance spectral parameter $\varrho>-\frac 14$.}
\end{figure}

At resonance points $\varrho_r$  it is possible that a Darboux
transform $f_1$ has $\varrho_r$ as a resonance point.

\begin{theorem} 
Let $\varrho_r\in\R_*$ is a resonance point of an isothermic surface
$f$ and $f_1$ be a closed Darboux transform of $f$ given by a
$d_{\varrho_r}$--parallel section $\varphi_1$ with multiplier $h_1$. 
If $\varphi_1$ extends to $d_\lambda$--parallel sections
$\varphi_1^\lambda$ with multiplier $h_1^\lambda$ near
$\lambda=\varrho_r$ then   $\varrho_r$ is a resonance point of $f_1$.
\end{theorem}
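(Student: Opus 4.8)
The plan is to reduce the assertion to exhibiting, on the universal cover, a pair of $\H$--independent $d^1_{\varrho_r}$--parallel sections that share one and the same \emph{real} multiplier; here $d^1_\lambda$ denotes the associated family of $f_1$, which extends smoothly into $\lambda=\varrho_r$ by Theorem \ref{thm:sfd}. This refines Theorem \ref{thm:all parallel sections} in the resonance setting, and two preliminary observations set the stage. First, since $\varrho_r$ is a resonance point of $f$, every $d_{\varrho_r}$--parallel section of $f$ is a section with multiplier and, as noted after the definition of resonance points, all of these multipliers coincide and are real; in particular this common multiplier equals $h_1\in\R$ since $\varphi_1$ is such a section. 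Second, since $f_1$ is closed, the line bundle $L_1=\varphi_1\H$ descends to $M$, and so does $L$; hence the projections $\pi,\pi_1$ onto $L,L_1$ along $\trivial 2=L_1\oplus L$ are equivariant for the deck action, i.e.\ $\gamma^{*}\circ\pi=\pi\circ\gamma^{*}$ and $\gamma^{*}\circ\pi_1=\pi_1\circ\gamma^{*}$ for all $\gamma\in\pi_1(M)$.

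Next I would produce the two distinguished parallel sections. For the Sym--type section, let $\varphi_1^\lambda$ be the given smooth extension with multiplier $h_1^\lambda$ and set $\varphi_{11}=\varphi_1-\varrho_r\,\pi\big(\tfrac{\partial}{\partial\lambda}\varphi_1^\lambda\big)\big|_{\lambda=\varrho_r}$ as in Theorem \ref{thm:sym type}; differentiating $\gamma^{*}\varphi_1^\lambda=\varphi_1^\lambda h_1^\lambda(\gamma)$ in $\lambda$ at $\varrho_r$ and applying $\pi$, the term involving $\partial_\lambda h_1^\lambda(\gamma)$ drops out because $\pi\varphi_1=0$, and equivariance of $\pi$ then yields $\gamma^{*}\varphi_{11}=\varphi_{11}h_1(\gamma)$; thus $\varphi_{11}$ is a section with multiplier $h_1$ (this is the content of Theorem \ref{thm: sym-type-closed}). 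For the Bianchi--type section, pick a $d_{\varrho_r}$--parallel section $\varphi_2$ that is $\H$--independent of $\varphi_1$ and put $\varphi_{12}=\pi\varphi_2$; by Theorem \ref{thm:bianchi} together with Remark \ref{rem:doublebianchi} this is $d^1_{\varrho_r}$--parallel, and it is nonzero since a $d_{\varrho_r}$--parallel section lying in $L_1$ would be a constant multiple of $\varphi_1$. By the first preliminary observation $\varphi_2$ has multiplier $h_1$, hence so does $\varphi_{12}=\pi\varphi_2$ by equivariance of $\pi$.

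Finally I would assemble these. Applying $\pi_1$ gives $\pi_1\varphi_{11}=\varphi_1\neq0$ while $\pi_1\varphi_{12}=0$, so $\varphi_{11},\varphi_{12}$ are $\H$--independent; since $d^1_{\varrho_r}$ is flat on the trivial rank--two bundle $\ttrivial 2$ over the simply connected $\tilde M$, its space of parallel sections is quaternionic two--dimensional and $\{\varphi_{11},\varphi_{12}\}$ is a basis. Hence an arbitrary $d^1_{\varrho_r}$--parallel section is $\hat\varphi=\varphi_{11}a+\varphi_{12}b$ with $a,b\in\H$, and because $h_1(\gamma)\in\R$ is central,
\[
\gamma^{*}\hat\varphi=\varphi_{11}h_1(\gamma)a+\varphi_{12}h_1(\gamma)b=\varphi_{11}a\,h_1(\gamma)+\varphi_{12}b\,h_1(\gamma)=\hat\varphi\,h_1(\gamma),
\]
so $\hat\varphi$ is a section with multiplier $h_1$. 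As $\hat\varphi$ was arbitrary, every $d^1_{\varrho_r}$--parallel section has a multiplier, which is exactly the statement that $\varrho_r$ is a resonance point of $f_1$. The one genuinely delicate point is the multiplier computation for the Sym--type section $\varphi_{11}$: it relies essentially on $f_1$ being closed (so that $\pi$ commutes with deck transformations) together with $\pi\varphi_1=0$ annihilating the derivative-of-multiplier term. Once that is secured, the reality of the common multiplier at a resonance point makes the passage from the two basis sections to all parallel sections immediate.
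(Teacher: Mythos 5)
Your proposal is correct and follows essentially the same route as the paper: exhibit the Sym--type section $\varphi_{11}$ and the Bianchi--type section $\varphi_{12}$ as an $\H$--basis of the $d^1_{\varrho_r}$--parallel sections, check both carry the common real multiplier $h_1$ (the Sym--type one via Theorem \ref{thm: sym-type-closed}, the Bianchi--type one because all multipliers at a resonance point of $f$ coincide and are real), and conclude by centrality of $h_1\in\R$ that every quaternionic linear combination is again a section with multiplier $h_1$. Your explicit justification of the deck--equivariance of $\pi$ and of the vanishing of the $\partial_\lambda h_1^\lambda$ term via $\pi\varphi_1=0$ makes precise points the paper leaves implicit, but the argument is the same.
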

\begin{proof}
By Theorem \ref{thm:all parallel sections} every parallel section of
the family of flat connections of $f_1$ is either a Sym--type or a
Bianchi--type parallel section. Every Bianchi--type parallel section
$\varphi_{12}$ gives rise to the Darboux transform $f_{12} = f$ and is given by a
parallel section $\varphi_2$ with real multiplier $h_2 = h_1$ since
$\varrho_r$ is a resonance point. 

By Theorem \ref{thm: sym-type-closed} we know that a Sym--type Darboux
transform is closed if $\varphi_1$ can be extended by a
$d_\lambda$--parallel sections $\varphi_1^\lambda$ with multiplier
$h_1^\lambda$.  In this case,  $\varphi_{11}$ has multiplier $h_1$ and $\varphi_{12}$ and
$\varphi_{11}$ have the same real multiplier. Since any parallel
section $\hat\varphi$ is a linear combination
\[
\hat \varphi = \varphi_{11} m_1 + \varphi_{12}m_2
\]
with $m_1, m_2\in\H$ we see that every parallel section has multiplier
$h_1$. Thus, $\varrho_r$ is a resonance point of $f_1$.
\end{proof}

\[
\diagram
 & && & f_{11}\ar@{==}[d]\\
 & & f_1 \drrto_{(\varphi_{12}^{\varrho}, h)}
 \urrto^{(\varphi_{11}^{\varrho},
   h)}\ar@{->}[rr]_{\quad (\varphi^{\varrho}, h), \, m_i\in\H}&&\hat f \ar@{==}[d]& & \\
f\urrto_{(\varphi_1^{\varrho}, h_1)}\drrto_{(\varphi_2^{\varrho}, h)} & && &
f=f_{12}= f_{21}\\
 & &f_2
 & &
\enddiagram
\]

\begin{example}
  Surfaces of revolution $f: M \to\R^3$ are examples of isothermic
  surfaces with resonance points.  All Darboux transforms $f_1$ with
  respect to a resonance point $\varrho_r\in\R_*$ which are surfaces of
  revolution have $\varrho_r$ as a resonance point too and thus a
  $\HP^1$--family of closed (possibly singular) Darboux transforms.

  The only closed Darboux transforms $f_1$ of $f$ which are not
  surfaces of revolution are (isothermic) bubbletons. In this case, the spectral
  parameter $\varrho_r$ gives only one closed Darboux transform of
  $f_1$, namely the original surface of revolution $f$.
\end{example}

\begin{figure}[H]
	\centering
	\begin{minipage}{0.45\textwidth}
		\centering
		\includegraphics[width=\linewidth]{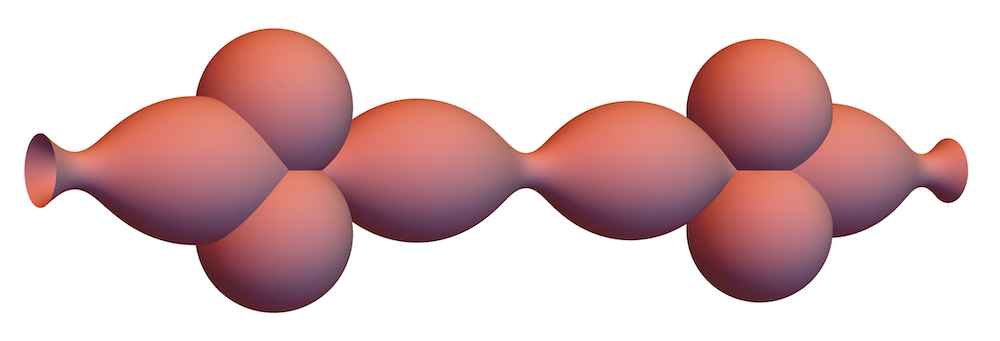}
	\end{minipage}
	\begin{minipage}{0.45\textwidth}
		\centering
		\includegraphics[width=\linewidth]{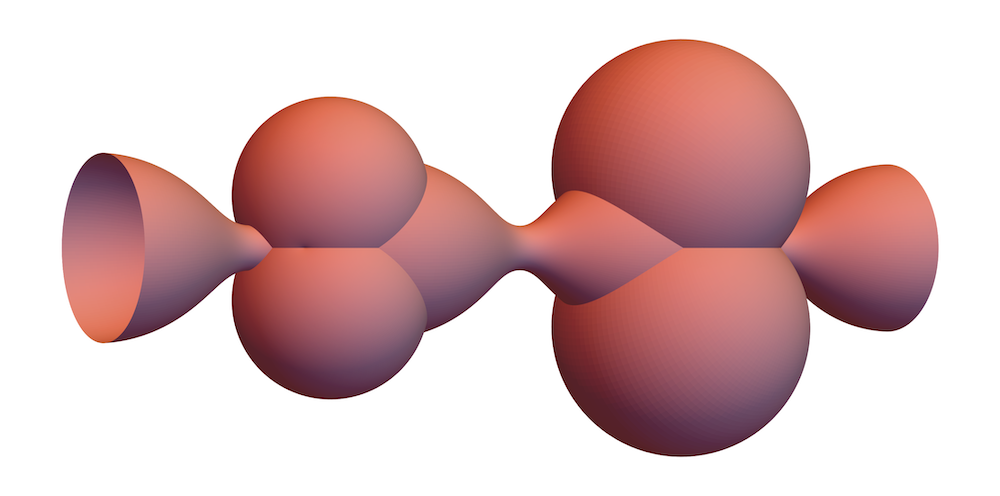}
	\end{minipage}
	\begin{minipage}{0.45\textwidth}
		\centering
		\includegraphics[width=\linewidth]{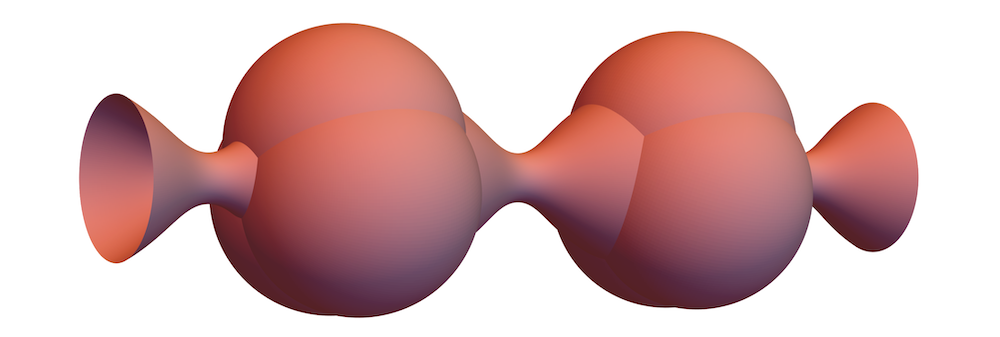}
	\end{minipage}
	\begin{minipage}{0.45\textwidth}
		\centering
		\includegraphics[width=\linewidth]{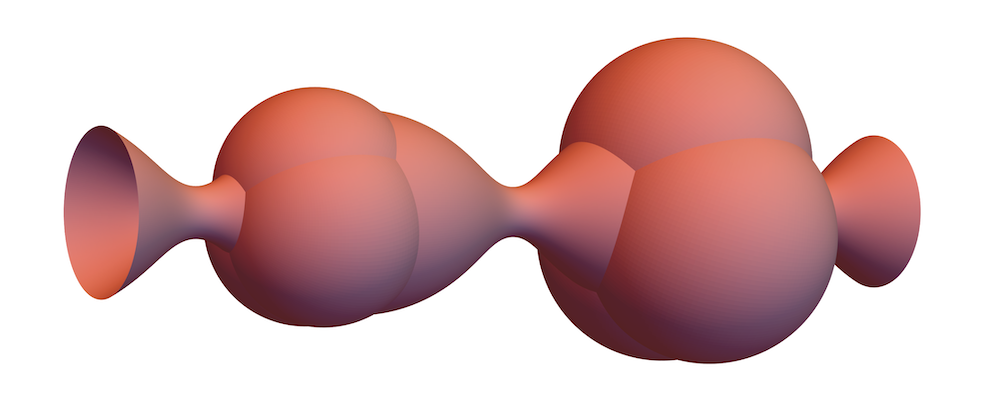}
	\end{minipage}

\caption{Sym--type Darboux transforms of an unduloid at resonance
  points $\varrho_k = \frac{k^2-1}4, k=2,3$.}
\label{fig: cmc similar}
\end{figure}

\section{Sym--type Darboux transforms of the round cylinder}
In this section we will demonstrate explicitly the construction of
Sym--type Darboux transforms in the example of a conformally
parametrised round cylinder (referred to simply as cylinder, hereafter).
We will first show that the Darboux transform
of a real--analytic surface of revolution, which does not have constant
mean curvature, has constant mean curvature if and only if the Darboux
transform is again a surface of revolution. This will allow to rule
out later that closed surfaces obtained
by Sym--type Darboux transforms are constant mean curvature surfaces.

We then will give all Darboux transforms of a cylinder explicitly by
computing all parallel sections of the family of flat
connections. With this at hand, we will consider the case when the
one--step Darboux transform is a surface of revolution but not CMC. In
this case, we give two surprisingly explicit examples of Sym--type transforms, one
which is a surface of revolution and one which is not.

\subsection{Darboux transforms of a surface of revolution}

\label{app:sor}

We first discuss curvature properties of Darboux transforms of a
surface of revolution which is not a Delaunay surface. Given an
isothermic surface $f: M \to\R^3$ recall that the
associated family $d_\lambda$ gives rise to a dual surface $f^d$ via
(\ref{eq:eta coord}) by $df^d = \omega$.  Writing a
$d_\varrho$--parallel section $\varphi = e\alpha + \psi
\beta\in\Gamma(\ttrivial 2)$, $\varrho\in\R_*$, where 
\[
e =\begin{pmatrix} 1 \\0
\end{pmatrix}, \quad \psi =\begin{pmatrix} f \\ 1
\end{pmatrix}
\]
we obtain the Riccati equation
\begin{equation}
\label{eq:Riccati}
dT = -df + T df^d\varrho T
\end{equation}
for $T=\alpha\beta\invers$ in the case when $T: \tilde M \to\R^3$. In
this case, 
the Darboux transform given by $\varphi$ can be written  in affine
coordinates as
$\hat f = f + T$ so that $d\hat f = \varrho T df^d T$.

Next we recall that for an isothermic surface $f: M \to\R^3$ the mean curvature of a Darboux transform $\hat f = f +T$ in
3--space is given in terms of the mean curvature of a dual surface
$f^d$ of $f$.

\begin{lemma}[{\cite[Equation 58]{darboux_isothermic}}]
Let $f: M \to\R^3$ be an isothermic surface in 3--space with Gauss map
$N$ and dual surface $f^d$. 
  Then the mean curvature of a Darboux transform $\hat f = f+ T:
  \tilde M \to\R^3$ of $f$ with parameter
  $\varrho$ is given by
\begin{equation}
\label{eq:DarbouxH}
\hat H = -\frac 1{|T|^2}(\frac {H^d}{\varrho} - 2\langle T,N\rangle)\,,
\end{equation}
where $H^d$ is the mean curvature of the dual surface $f^d$ of $f$.
\end{lemma}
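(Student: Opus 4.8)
The plan is to compute both sides using the Riccati equation \eqref{eq:Riccati} together with the structure equations of the surface $\hat f = f + T$. First I would recall that $d\hat f = \varrho\, T\, df^d\, T$, so that the induced metric of $\hat f$ is $|d\hat f|^2 = \varrho^2 |T|^4 |df^d|^2$ (using $|Ta| = |T||a|$ for the quaternionic product, since $T \in \ImQ\H$ acts conformally). The Gauss map $\hat N$ of $\hat f$ must then be identified: since $\hat f$ and $f$ share an enveloping sphere congruence and $\hat f = f + T$, one knows that $\hat N$ is obtained from $N$ by the reflection in the hyperplane orthogonal to $T$, i.e. $\hat N = -\frac{1}{|T|^2} T N T$ (equivalently $\hat N = T N T^{-1}$ up to sign, using $T^{-1} = -T/|T|^2$). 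This is the geometric input that makes the Darboux sphere congruence work, and it can be read off from $df^{d} = f_x^{-1}dx - f_y^{-1}dy$ and the conformality relation $*df = N\,df$.

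Next I would write the mean curvature of an immersion $g$ into $\R^3$ with unit normal $\nu$ in quaternionic form as $H_g = -\frac{1}{2}\,\mathrm{tr}_{\text{metric}}\langle d\nu, dg\rangle / |dg|^2$, or more conveniently use the Weingarten-type identity that for a conformal immersion $*dg = \nu\, dg$ one has the mean curvature half-density encoded in $d\nu^{(1,0)} = \ldots$. Concretely, for the dual surface $f^d$ with Gauss map $N^d = -N$ (as recorded after \eqref{eq:eta coord}), its mean curvature $H^d$ satisfies a relation of the form $dN + (\text{something}) \sim H^d\, df^d$; substituting the reflection formula for $\hat N$ and the Riccati equation $dT = -df + \varrho\, T\, df^d\, T$ into the definition of $\hat H$, everything should collapse. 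The key algebraic identity to exploit is that for $T \in \ImQ\H$ and any $a \in \ImQ\H$, $\langle TaT, N\rangle$ and $\langle a, TNT\rangle$ are related by $|T|^2$-factors, which lets the $|T|^{-2}$ prefactor and the $\langle T, N\rangle$ term in \eqref{eq:DarbouxH} emerge naturally, while the $H^d/\varrho$ term comes from the $-df$ part of the Riccati equation once it is re-expressed via $df = f_x\,dx + f_y\,dy$ against $df^d = f_x^{-1}dx - f_y^{-1}dy$.

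The main obstacle I expect is correctly pinning down the Gauss map $\hat N$ of the Darboux transform and the precise normalisation in the quaternionic mean-curvature formula — sign conventions for the Hodge star, the choice of orientation of $\hat N$ relative to the enveloping sphere congruence, and the scaling ambiguity in $f^d$ (which, as the text notes, rescales $\varrho_0$ and hence interacts with the $1/\varrho$ factor). Once $\hat N = -T N T/|T|^2$ is established and the identity $d\hat f = \varrho\, T df^d T$ is differentiated, the computation is a direct but bookkeeping-heavy manipulation in $\ImQ\H$; I would organise it by first computing $\langle d\hat N, d\hat f\rangle$ in the conformal coordinate $z = x+iy$, using $dT$ from \eqref{eq:Riccati} and the product rule, then dividing by $|d\hat f|^2 = \varrho^2|T|^4|df^d|^2$. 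This should reproduce exactly \eqref{eq:DarbouxH}, and I would cite \cite[Equation 58]{darboux_isothermic} as the reference where this identity originates, presenting the above as the outline of its verification in the present notation.
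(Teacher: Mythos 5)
First, a point of reference: the paper does not prove this lemma at all --- it is imported verbatim from \cite{darboux_isothermic} (their Equation 58), so there is no in-paper argument to compare yours against. Your outline is the natural verification route and is essentially the derivation in that source: compute $d\hat f=\varrho\,T\,df^d\,T$, identify $\hat N$ by conjugation with $T$, and read off $\hat H$ from a Weingarten-type identity using the Riccati equation \eqref{eq:Riccati}.

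That said, as submitted this is a plan rather than a proof, and two of the points you flag as ``obstacles'' are exactly where it is not yet watertight. (i) The sign of $\hat N$ is not optional under the paper's conventions: from $*df = N\,df$, $N^d=-N$ and $d\hat f=\varrho\,T\,df^d\,T$ one computes $*d\hat f = -\,(TNT^{-1})\,d\hat f$, so with $T^{-1}=-T/|T|^2$ the Gauss map is $\hat N = -TNT^{-1}= +TNT/|T|^2$, the \emph{opposite} sign to the one you wrote; since this flips the sign of $\hat H$, it must be pinned down before \eqref{eq:DarbouxH} can be claimed. (ii) The mean-curvature identity you leave as ``$d\nu^{(1,0)}=\ldots$'' should be made explicit; the clean choice is the one the paper itself uses in the proof of Lemma \ref{lem:cmc}, namely $-H\,dg=\tfrac12(d\nu-\nu\,{*}d\nu)$ for a conformal immersion $g$ with Gauss map $\nu$, applied to $\hat f$ and combined with $dN=H^d\,df^d-H\,df$. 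With those two inputs fixed, differentiating $\hat N=TNT/|T|^2$ via \eqref{eq:Riccati} and pairing against $d\hat f$ does collapse to \eqref{eq:DarbouxH}, but that computation is the entire content of the lemma and is only asserted, not performed, in your proposal. Since the statement is anyway a quoted result, either carry the calculation through or simply cite \cite{darboux_isothermic} as the paper does; the intermediate state (a sketch with an unresolved sign) is the weakest option.
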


Similar to the case when $f$ is CMC in \cite{coimbra} one can now derive a
necessary condition for a Darboux transform of an isothermic surface
to have constant mean curvature:

\begin{lemma}
\label{lem:cmc}
Let 
$f: M \to\R^3$ be an isothermic surface and $\hat f = f+ T: \tilde M
\to\R^3$ a Darboux transform of $f$. If $\hat f$ has constant mean
curvature $\hat H$ then
\[
(H - \hat H)\langle df, T\rangle + \frac{dH^d}{2\varrho}=0\,,
\]
where $H$ and  $H^d$ are the mean curvatures of $f$ and its dual
surface $f^d$ respectively. 
\end{lemma}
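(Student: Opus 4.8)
The plan is to combine the two formulas displayed just above the statement, namely the expression \eqref{eq:DarbouxH} for the mean curvature $\hat H$ of the Darboux transform $\hat f = f+T$, and the hypothesis that $\hat H$ is constant, and then differentiate. Starting from
\[
\hat H = -\frac{1}{|T|^2}\Big(\frac{H^d}{\varrho} - 2\langle T, N\rangle\Big),
\]
I would rewrite this as $\hat H\,|T|^2 = 2\langle T, N\rangle - \frac{H^d}{\varrho}$ and apply $d$ to both sides, using that $\hat H$ and $\varrho$ are constant. This produces
\[
\hat H\, d(|T|^2) = 2\,d\langle T, N\rangle - \frac{dH^d}{\varrho}.
\]
The two sides must then be re-expressed using the structure equations for $f$, its dual $f^d$, and the Gauss map $N$, together with the Riccati equation \eqref{eq:Riccati}, $dT = -df + \varrho\, T\, df^d\, T$, and the relation $d\hat f = \varrho\, T\, df^d\, T = dT + df$.

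The key computational steps are, first, to differentiate $|T|^2 = \langle T,T\rangle$ to get $d(|T|^2) = 2\langle dT, T\rangle$ and substitute the Riccati equation, obtaining $\langle dT, T\rangle = -\langle df, T\rangle + \varrho\langle T\,df^d\,T, T\rangle$; the second term is $\langle d\hat f - df, T\rangle$, so this term should be massaged using that $\hat f$ has Gauss map related to $N$ (reflection in $T$) and that $d\hat f$ is orthogonal to its own normal. Second, to differentiate $\langle T, N\rangle$ via $d\langle T,N\rangle = \langle dT, N\rangle + \langle T, dN\rangle$, using $dN = -$shape-operator applied to $df$ so that $\langle T, dN\rangle$ pairs against the second fundamental form of $f$, and using that $\langle df, N\rangle = 0$. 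Finally, the dual surface satisfies $df^d = f_x^{-1}dx - f_y^{-1}dy$ (up to scale) in an isothermic coordinate, and its mean curvature $H^d$ relates to $H$; the term $\frac{dH^d}{2\varrho}$ is to be kept as is, matching the target identity. Assembling all pieces, the terms involving the second fundamental forms of $f$ and the normal-direction data should cancel in pairs, leaving exactly
\[
(H - \hat H)\langle df, T\rangle + \frac{dH^d}{2\varrho} = 0.
\]

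I expect the main obstacle to be the careful bookkeeping of the quaternionic versus vectorial identities: the terms $\varrho\langle T\,df^d\,T, T\rangle$ and $\langle dT, N\rangle$ each expand into several pieces involving cross products and inner products in $\ImQ\H$, and one must use the compatibility $*df = N\,df = -df\,N$ (and the analogous identity for $f^d$ with Gauss map $(-R,-N) = (-N,-N)$ in the surface case) to see which pieces are symmetric $1$-forms and which are skew. In particular, showing that the contribution of $d\hat f$ to $\langle dT, T\rangle$ combines with $2\langle T, dN\rangle$ to produce precisely $-2\hat H\langle df,T\rangle + 2H\langle df, T\rangle$ — i.e.\ that all the genuinely second-order (curvature) data collapses to the single scalar combination $(H-\hat H)$ — is the crux. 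A clean way to organise this is to work in an isothermic coordinate $z = x+iy$, write $T$, $df$, $df^d$, $N$ in components, and verify the cancellation directly; the conformality relations force enough structure that the non-$(H-\hat H)$ terms vanish identically.
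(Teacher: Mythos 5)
Your plan follows the paper's own proof essentially verbatim: rewrite \eqref{eq:DarbouxH} as $\tfrac12\hat H|T|^2+\tfrac{H^d}{2\varrho}-\langle T,N\rangle=0$, differentiate using the constancy of $\hat H$, substitute the Riccati equation \eqref{eq:Riccati} into $\langle dT,T\rangle$ and $\langle dT,N\rangle$, expand $dN$, and check that the residual terms cancel via the quaternionic identity $TaT=a|T|^2-2\langle a,T\rangle T$ together with $\langle df^d,N\rangle=0$ and \eqref{eq:DarbouxH} itself. The only ingredient you leave implicit is that the ``shape operator'' step must use the specific isothermic decomposition $dN=H^d\,df^d-H\,df$ (coming from $-Hdf=\tfrac12(dN-N*dN)$ and $N^d=-N$), which is exactly what produces the coefficient $(H-\hat H)$.
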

\begin{proof} From $-Hdf = \frac 12(dN -N*dN)$, see \cite[Sec.\ 7.2]{coimbra}, and $N^d=-N$ we know that 
 $dN = H^ddf^d - Hdf$. 
Since  $\hat H$ is constant we can differentiate equation
(\ref{eq:DarbouxH}) 
\[
\frac 12\hat H |T|^2 + \frac {H^d}{2\varrho}
 - \langle T,N\rangle =0
\]
to obtain, 
using
the Riccati equation, that
\begin{align*}
0&=\hat H \langle dT, T\rangle + 
\frac{dH^d}{2\varrho} -\langle dT, N\rangle -\langle T, dN\rangle \\
&= \hat H \langle -df + Tdf^d\varrho T, T\rangle + 
\frac{dH^d}{2\varrho} -\langle Tdf^d\varrho T, N\rangle -\langle T, H^ddf^d - Hdf\rangle \\
&= (H - \hat H)\langle df, T\rangle + \frac{dH^d}{2\varrho} + \langle Tdf^d \varrho T,
  \hat H T-N\rangle -\langle T, H^d df^d\rangle\,.
\end{align*}
It remains to show that
\[
0=\langle Tdf^d \varrho T,
  \hat H T-N\rangle -\langle T, H^d df^d\rangle\,.
\]
Since $\langle a,b\rangle =- \frac 12(ab + ba)$ for $a, b\in\ImQ\H =\R^3$, we get 
\[
\langle Tdf^d \varrho T,
  T\rangle = -\varrho |T|^2\langle T,df^d\rangle
\]
so that
\begin{align*}
\langle Tdf^d \varrho T,
&  \hat H T-N\rangle -\langle T, H^d df^d\rangle   \\
&=-(\varrho |T|^2 \hat H + H^d)\langle T, df^d\rangle - \varrho \langle Tdf^dT, N\rangle \\
& =
-2\varrho \langle T,N\rangle\langle T, df^d\rangle - \varrho \langle Tdf^dT, N\rangle \\
&=\frac{\varrho}2( -(TN + NT)df^dT - Tdf^d(TN + NT) + Tdf^dTN + NTdf^dT)\\
&=0
\end{align*}
where we used equation (\ref{eq:DarbouxH}) and  $Ndf^d =-df^dN$. 
\end{proof}

We can now use the previous lemma to discuss the mean curvature of
Darboux transforms of 
surfaces of revolution.

\begin{theorem}
\label{thm:cmcorsor}
Let $f: M \to \R^3$ be a real--analytic conformal surface of
revolution in 3--space.  If a Darboux transform
$\hat f: \tilde M \to\R^3$ of $f$ has constant mean curvature in
3--space then $\hat f: M \to\R^3$ is a surface of revolution or $f$ is
CMC, that is, at least one of $\hat f$ or $f$ is a Delaunay surface.
\end{theorem}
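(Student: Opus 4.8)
The strategy is to exploit the rigidity of surfaces of revolution together with the necessary condition from Lemma \ref{lem:cmc}, using real-analyticity to propagate an ODE-type argument. First I would set up explicit conformal coordinates: write $f(x,y) = ip(x) + jq(x)e^{-iy}$ as in the earlier discussion of surfaces of revolution, with $p'^2+q'^2 = q^2$, and compute the mean curvature $H = H(x)$ of $f$ — crucially, $H$ depends only on $x$ — and likewise the mean curvature $H^d$ of the dual surface $f^d$, which by $N^d = -N$ and the formula $df^d = f_x^{-1}dx - f_y^{-1}dy$ is again a function of $x$ alone. Then the condition from Lemma \ref{lem:cmc},
\[
(H - \hat H)\langle df, T\rangle + \frac{dH^d}{2\varrho} = 0,
\]
becomes the key relation to analyse: both $H$ and $H^d$ are functions of $x$ only, so $dH^d = (H^d)'(x)\,dx$ has no $dy$-component.

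**Main steps.** Decomposing the relation into its $dx$- and $dy$-parts, the $dy$-component gives $(H-\hat H)\langle f_y, T\rangle = 0$, while the $dx$-component gives $(H-\hat H)\langle f_x, T\rangle + (H^d)'(x)/(2\varrho) = 0$. Now split into cases. If $H \equiv \hat H$, then since $f$ is not assumed CMC we must have $\hat H = H$ constant, forcing $H$ constant, i.e. $f$ is CMC (a cylinder or Delaunay surface) — this is the "$f$ is CMC" alternative. Otherwise $H - \hat H$ is not identically zero; by real-analyticity it vanishes only on a discrete set, so on an open dense set $\langle f_y, T\rangle = 0$. Since $f_y$ spans the tangent direction along the circles of revolution, the Riccati solution $T = \hat f - f$ is orthogonal to the "rotational" tangent direction everywhere by continuity. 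The remaining work is to show that this orthogonality condition, together with the Riccati equation \eqref{eq:Riccati} $dT = -df + Tdf^d\varrho T$ and the symmetry of $f$ and $f^d$ under the one-parameter rotation group, forces $\hat f = f + T$ to itself be rotationally symmetric. I would argue this via uniqueness of solutions to the Riccati ODE: the rotation field $Y = \partial_y$ generates a Killing field $\xi$ of $\R^3$ preserving both $f$ and $f^d$; the condition $\langle \xi, T\rangle = 0$ (which is what $\langle f_y, T\rangle = 0$ says, up to the position-vector correction from the rotation axis) is preserved along the flow, and the transported solution $T$ again satisfies the same Riccati equation with the same initial data at a point on a fixed meridian, hence coincides with $T$ — so $\hat f$ is invariant under the rotations, i.e. a surface of revolution.

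**The main obstacle.** The delicate point is the last step: deducing rotational symmetry of $\hat f$ from the orthogonality $\langle f_y, T\rangle = 0$. One has to be careful that "surface of revolution" here is meant in the conformal/Möbius sense (rotation about an axis, or about a point after a Möbius transformation), and that $\hat f$ is a priori only defined on the universal cover $\tilde M$, so periodicity in $y$ is not automatic — the symmetry argument produces a surface invariant under the rotation flow but one must check it descends correctly. I expect the cleanest route is: use real-analyticity to reduce everything to an identity of analytic functions along a single meridian $\{y = 0\}$, apply the rotation $R_\theta \in SO(3)$ fixing the axis, observe that $R_\theta \cdot \hat f$ solves the same Riccati equation \eqref{eq:Riccati} (because $R_\theta$ fixes $f$ and $f^d$ pointwise up to reparametrisation $y \mapsto y - \theta$) with matching Cauchy data thanks to $\langle f_y, T\rangle = 0$, and invoke ODE uniqueness to conclude $R_\theta \cdot \hat f = \hat f$ for all $\theta$. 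The orthogonality condition is exactly what guarantees the initial data matches after the rotation, so it is the linchpin; verifying it rigorously — including handling the position-vector shift coming from the location of the rotation axis — is where the real care is needed, while the mean-curvature bookkeeping and the case split are routine given Lemmas \ref{lem:cmc} and the formula \eqref{eq:DarbouxH}.
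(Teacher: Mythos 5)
Your setup and the first two thirds of the argument coincide with the paper's proof: decomposing the identity of Lemma \ref{lem:cmc} into its $dx$-- and $dy$--parts, splitting into the cases $\hat H\equiv H$ (which forces $H$ constant, so $f$ is CMC) and $\hat H\not\equiv H$, and using real--analyticity of $H$ to upgrade $(\hat H-H)\langle f_y,T\rangle=0$ to $\langle f_y,T\rangle\equiv 0$ are exactly the steps taken there. The gap is in your final step. Writing $T=in+jme^{-iy}$ with $n,m$ real--valued (which is what $\langle f_y,T\rangle=0$ gives), rotational symmetry of $\hat f$ is precisely the statement $n_y=m_y=0$. Your proposed route --- apply the rotation $R_\theta$, note that $R_{-\theta}T(\cdot,\cdot+\theta)$ solves the same Riccati equation, and invoke uniqueness ``with matching Cauchy data thanks to $\langle f_y,T\rangle=0$'' --- does not close, because the Cauchy data of $T$ and of its rotate agree at a point $(x_0,y_0)$ exactly when $T(x_0,y_0+\theta)=R_\theta T(x_0,y_0)$, i.e.\ exactly when $n$ and $m$ are constant along the circle $x=x_0$. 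That is the conclusion you are after, not a consequence of orthogonality: $\langle f_y,T\rangle=0$ only confines $T$ at each point to the $2$--plane spanned by $i$ and $je^{-iy}$ and says nothing about how the coefficients vary in $y$. The infinitesimal version runs into the same circle: $\tfrac{d}{d\theta}\big|_{0}\,R_{-\theta}T(\cdot,\cdot+\theta)=in_y+jm_ye^{-iy}$, so demanding that this vanish at a point is again the statement to be proved.

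What actually closes the argument --- and is the paper's route --- is to feed the orthogonality back into the $\partial_y$--component of the Riccati equation: since $\langle T,f_y\rangle=0$ one has $Tf_yT=f_y|T|^2$, hence $T_y$ is a \emph{real} multiple of $f_y=kqe^{-iy}$, and comparing with $T_y=in_y+j(m_y-im)e^{-iy}$ forces $n_y=0$; the $dx$--component of Lemma \ref{lem:cmc}, namely that $\langle f_x,T\rangle=p'n+q'm$ depends only on $x$, then gives $m_y=0$. So the missing ingredient is a short computation with the Riccati equation rather than a symmetry/uniqueness principle; your instinct that this is ``where the real care is needed'' was correct, but the mechanism you propose does not supply the identity $n_y=m_y=0$ that rotational invariance requires.
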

\begin{proof}
Since $f$ is conformally parametrised we can write $f(x,y)= ip(x)+ j q(x)
e^{-iy}$ with smooth real--valued functions $p, q$ satisfying
$p'^2+q'^2=q^2$. 

Let $\hat f: \tilde M \to \R^3$ be a Darboux transform in 3--space with
parameter $\varrho$, that is $\hat f = f + T$ where $T$ satisfies the
Riccati equation (\ref{eq:Riccati}).  Since both $f$ and its dual
$f^d$ are surfaces of revolution the mean curvatures $H$ and $H^d$ of
both surfaces are independent of $y$. Thus, Lemma \ref{lem:cmc}
gives
\[
0 = (\hat H- H) \langle f_y, T\rangle = (\hat H -H) \langle -ji q e^{-iy},T\rangle\,.
\]
If  $\hat H = H$ then $f$ has constant mean curvature and we are
done. Now, assume that $\hat H \not=H$. Since $f$ is real--analytic so
is $H$,  and thus $\hat H-H$ has only
isolated zeros. Then  $\langle f_y, T\rangle=0$ away from the isolated zeros of
$\hat H-H$. Since $f$ and $T$ are smooth, we
conclude that $\langle f_y, T\rangle=0$ on $M$.    This shows that
\[
T = i n + j m e^{-iy}
\]
where $m, n$ are  real valued functions. On the other hand, $H^d$ and thus also
$H^d_x$ only depend on $x$, so that
\[
\langle f_x, T\rangle = \langle ip' + jq' e^{-iy}, in + jm e^{-iy}\rangle = p'n + q'm
 \]
only depends on $x$. Now, $dT = -df + Tdf^d\varrho T$ shows
\[
T_y = - f_y - \frac{1}{|f_y|^2}Tf_y \varrho  T\,.
\]
Since
\[
Tf_y T = f_y|T|^2 - 2\langle T, f_y\rangle T = f_y|T|^2
\]
we have $T_y = f_y(-1-\frac{\varrho|T|^2}{|f_y|^2})$. Therefore, 
$T_y=in_y + j(m_y-i m)e^{-iy}$ is a scale of $f_y=kq e^{-iy}$ by a real-valued function,
and thus $n_y=0$. Since $\langle f_x, T\rangle, p', q'$ only depend on $x$ this
shows that also $m_y=0$. Therefore we have shown that  $\hat f$ is a surface of
revolution if $\hat H\not=H$. 
\end{proof}

\subsection{Darboux transforms of a cylinder}
\label{app:cyl}
We will compute all Darboux transforms of a  conformally parametrised cylinder, of constant mean curvature $H=1$
\[
f(x,y) = \frac 12(ix + j e^{-iy})\,.
\]
Consider the dual surface $f^d$ given, up to translation, by
$df^d = f_x\invers dx - f_y\invers dy$. We choose
$f^d(x,y)= -2( ix - je^{-iy})$ and observe that the dual surface has
constant mean curvature $H^d= -\frac 14$.

To find all $d_\varrho$--parallel sections, $\varrho\not=0$, we recall  (\ref{eq:eta
  coord}) that 
\[
d_\varrho = d+ \varrho \begin{pmatrix} fdf^d & - fdf^d f\\ df^d
  &- df^d f
\end{pmatrix}\,.
\]
Since $L\oplus e\H= \trivial 2$ where $L =\psi\H$, 
\[
\psi = \begin{pmatrix} f \\ 1
\end{pmatrix}, e= \begin{pmatrix} 1 \\ 0
\end{pmatrix}\,,
\] 
we can write a $d_\varrho$--parallel
section $\varphi=\varphi^\varrho\in\Gamma(\ttrivial 2)$ as 
\[
\varphi = e\alpha + \psi \beta\,,
\]
with $\alpha=\alpha^\varrho, \beta=\beta^\varrho: \tilde M \to \H$. 
If $\varphi=\varphi^\varrho$ is $d_\varrho$--parallel we thus see that 
\[
d\alpha =-df \beta, d\beta =-df^d\alpha\varrho\,.
\]
From this we observe that $\varphi$ has complex multiplier $h$ if and
only if $\alpha$ has also multiplier $h$.

Differentiating the above equations again we obtain in the isothermic
coordinate $z=x+iy$ the differential equation
\begin{equation}
\label{eq:differential equation}
\alpha_{yy} -  i\alpha_y +\alpha\varrho =0\,,
\end{equation}
which has, in the case $\varrho\not=-\frac 14$, the solutions 
\[
\alpha = e^{\frac {iy}2}(c^+e^{\frac{i t y} 2}  + c^- e^{-\frac {i t y}2}) 
\]
where $c^\pm$ are $\H$--valued functions, independent of $y$, and
$t= \sqrt{1+4\varrho}$.  

 Thus, for $\varrho\not=-\frac 14$ the section  $\varphi = e\alpha + \psi \beta$ is a
section with multiplier if and only  if $c^+=0$, $c^-=0$ or $\varrho$ is a
resonance point.  In particular, the multiplier is
$h^\pm = -e^{\pm i \pi t}$. Note that if $\varrho$ is a resonance
point, that is, if $h^+ = h^-$, then $\varrho= \frac{k^2-1}4, k\in
\Z, k>1$.

In the case when $\varrho=-\frac 14$ the general solution to the differential
equation (\ref{eq:differential equation}) is given by 
\[
\alpha = e^{\frac {iy}2}(c_1 + y c_2)
\]
with $c_1, c_2$ quaternionic valued functions depending on $x$
only. From this we see that $\varphi=e\alpha+\psi \beta$ is a section
with multiplier if and only if $c_2=0$.  
Thus, to find sections with multipliers we can restrict to finding solutions
$\alpha$ of the form
$\alpha = e^{\frac {iy}2}(c^+e^{\frac{i t y} 2} + c^- e^{-\frac {i t
    y}2})$ for $t=\sqrt{1+4\varrho}, \varrho\not=0$ .

We write $c^\pm=c_0^\pm + j c_1^\pm$ with complex valued function
$c_0^\pm, c_1^\pm$. Then $\beta =-f_y\invers \alpha_y$ gives
\[
\beta = 
e^{\frac {iy}2}\left(\Big(c^+_1(t-1) + j c_0^+(1+t)\Big) e^{\frac{i t y} 2} - \Big(c^-_1(1+t) + jc_0^-(t-1)\Big) e^{\frac{-i t y} 2}\right)\,.
\]

It remains to find the complex--valued functions $c_i^\pm$.  Since
$d\alpha = -df \beta$ we see that $*d\alpha=Nd\alpha$ where
$N = - je^{-iy}$ is the Gauss map of $f$. Therefore, we can find
$c^\pm$ by solving the differential equation $\alpha_y = N\alpha_x$
which gives the linear system
\begin{align*}
(c_0^\pm)' &= -\frac{i(-1\pm t)}2 c_1^\pm \\
(c_1^\pm)' &= \frac{i(1\pm t)}2 c_0^\pm\,.
\end{align*}
The solutions of this system are given by
\begin{align*}
c_0^\pm(x) & = -2i\sqrt \varrho(m_0^\pm e^{\sqrt \varrho x} - m_1^\pm e^{-\sqrt
  \varrho x}) \\
c_1^\pm(x) & = (1\pm t) (m_0^\pm e^{\sqrt \varrho x} + m_1^\pm e^{-\sqrt
  \varrho x})
\end{align*}
with  $m_i^\pm \in\C$. Thus, we have now computed all parallel sections of
a cylinder explicitly. We summarise:

\begin{prop}
\label{prop:parallel}
Let $f(x,y) = \frac
12(i x+ j e^{-iy})$ be the round cylinder and $\varrho\in\R_*$. Then 
$\varphi^\pm
= e\alpha^\pm+ \psi \beta^\pm \in\Gamma(\ttrivial 2)$ are
$d_\varrho$--parallel sections with multipliers $h^\pm = -e^{\pm
  i \pi t}$,   where
\begin{align*}
\alpha^\pm &= e^{\frac {iy}2}(c_0^\pm+jc_1^\pm)e^{\pm\frac{i t y} 2}  \\
\beta^\pm &= 
e^{\frac {iy}2}\left(c_1^\pm(\pm t-1) + j c_0^\pm(1\pm t)\right)
            e^{\pm\frac{i t y} 2}
\end{align*}
with $t=\sqrt{1+4 \rho}$ and
\begin{align*}
c_0^\pm(x)=c_0^\pm(x, m_0^\pm, m_1^\pm) & = -2i\sqrt \varrho(m_0^\pm e^{\sqrt \varrho x} - m_1^\pm e^{-\sqrt
  \varrho x}) \\
c_1^\pm(x) =c_1^\pm(x, m_0^\pm,m_1^\pm) & = (1\pm t) (m_0^\pm e^{\sqrt \varrho x} + m_1^\pm e^{-\sqrt
  \varrho x})\,, \quad m_0^\pm, m_1^\pm \in\C\,.
\end{align*}

Moreover, every $d_\varrho$--parallel
section, $\varrho\not=-\frac 14$,  is given by
 $\varphi =
\varphi^++\varphi^-\in\Gamma(\ttrivial 2)$.

Finally, the resonance points of the cylinder are given by 
\[
\varrho_k = \frac{k^2-1}4, \qquad k\in\Z, k>1\,.
\]
In this case, \emph{every} $d_{\varrho_k}$--parallel section has multiplier
$h_k=(- 1)^{k+1}$.

\end{prop}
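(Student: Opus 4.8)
The plan is to convert the condition that $\varphi$ be $d_\varrho$--parallel into a constant--coefficient linear ordinary differential equation, solve it explicitly, and then read off the periodicity behaviour. Since $\trivial 2 = L \oplus e\H$ for the cylinder, every section is $\varphi = e\alpha + \psi\beta$ with $\alpha,\beta\colon\tilde M\to\H$, and writing $d_\varrho = d + \varrho\eta$ with $\eta$ of the form \eqref{eq:eta coord} for $\omega = df^d$ and $df^d = f_x\invers dx - f_y\invers dy$, the equation $d_\varrho\varphi = 0$ becomes the first--order system $d\alpha = -df\,\beta$, $d\beta = -\varrho\,df^d\alpha$. Because $f$ is conformal in the isothermic coordinate $z = x+iy$, the first equation is equivalent to $\beta = -f_y\invers\alpha_y$ together with $\alpha_y = N\alpha_x$ for the Gauss map $N = -je^{-iy}$; substituting into the second and eliminating $\beta$ gives the scalar equation $\alpha_{yy} - i\alpha_y + \varrho\alpha = 0$. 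Conversely, any $\alpha$ solving this equation and the constraint $\alpha_y = N\alpha_x$, with $\beta := -f_y\invers\alpha_y$, yields a $d_\varrho$--parallel $\varphi$; this is a routine check with the explicit cylinder data. Moreover $f_y\invers$ is $2\pi$--periodic in $y$, so $\beta$ has the same multiplier as $\alpha$, and hence $\varphi$ has a multiplier $h$ if and only if $\alpha$ does; from here on only $\alpha$ matters.

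Next I would solve the $y$--equation. For $\varrho\neq-\tfrac14$ its characteristic exponents are $\mu^\pm = \tfrac12(1\pm t)$ with $t = \sqrt{1+4\varrho}\neq0$, and since they are distinct the general $y$--dependence is
\[
\alpha = e^{\frac{iy}2}\bigl(c^+ e^{\frac{ity}2} + c^- e^{-\frac{ity}2}\bigr),
\]
with $c^\pm$ independent of $y$; at $\varrho = -\tfrac14$ the double root introduces the extra term $e^{iy/2}yc_2$, which is not $2\pi$--periodic and hence cannot occur in a section with multiplier — this is exactly why that value is excluded in the last assertion. Splitting $c^\pm = c_0^\pm + jc_1^\pm$ over $\C$ and computing $\beta = -f_y\invers\alpha_y$ yields the stated expressions for $\alpha^\pm$ and $\beta^\pm$. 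It remains to pin down the $x$--dependence: inserting $\alpha = e^{iy/2}c^\pm e^{\pm ity/2}$ into $\alpha_y = N\alpha_x$ and separating the $\C$-- and $j\C$--components turns this into the system $(c_0^\pm)' = -\tfrac{i(-1\pm t)}2 c_1^\pm$, $(c_1^\pm)' = \tfrac{i(1\pm t)}2 c_0^\pm$, whose decoupled second derivative is $(c_i^\pm)'' = \varrho\,c_i^\pm$; solving and matching the two constants per sign gives the displayed formulas in terms of $m_0^\pm, m_1^\pm\in\C$. Since $(c_0^\pm, c_1^\pm)$ carry four complex, hence eight real, parameters, this family already has the dimension of the full solution space of the first--order system, so every $d_\varrho$--parallel section (for $\varrho\neq-\tfrac14$) is $\varphi^+ + \varphi^-$.

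Finally, I would determine the multipliers and resonance points. The deck group of the cylinder is generated by $y\mapsto y+2\pi$; under it $e^{iy/2}\mapsto -e^{iy/2}$ and $e^{\pm ity/2}\mapsto e^{\pm i\pi t}e^{\pm ity/2}$, while $c_0^\pm, c_1^\pm$ (functions of $x$ only), $f$, $\psi$ and $e$ are fixed, so $\varphi^\pm\mapsto\varphi^\pm(-e^{\pm i\pi t})$, giving $h^\pm = -e^{\pm i\pi t}$. A section $\varphi = \varphi^+ + \varphi^-$ with both summands nonzero has a single multiplier exactly when $h^+ = h^-$, i.e. $e^{2\pi i t} = 1$, i.e. $t$ is a positive integer; discarding $t = 1$ (which is $\varrho = 0$) this means $\varrho = \varrho_k = \tfrac{k^2-1}4$ for $k\in\Z$, $k>1$, and then $h^+ = h^- = -e^{i\pi k} = (-1)^{k+1}$. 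At every other $\varrho$ (including $\varrho = -\tfrac14$, where the non--periodic term appears) one has $h^+\neq h^-$, so some parallel section has no multiplier; hence the $\varrho_k$ are precisely the resonance points, and at $\varrho_k$ every parallel section, being a combination of $\varphi^+$ and $\varphi^-$ with common multiplier $(-1)^{k+1}$, has multiplier $(-1)^{k+1}$.

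The step I expect to be the main obstacle is not conceptual but the non--commutative bookkeeping: keeping each quaternionic factor $c_i^\pm$ on the correct side of the complex exponentials throughout the derivation of $\beta^\pm$ and of the linear system, verifying that the one--sided ansatz $c^\pm e^{\pm ity/2}$ genuinely spans the full eight--dimensional solution space rather than a proper subspace, and handling $\varrho < -\tfrac14$, where $t$ and $\sqrt\varrho$ are imaginary so that the exponentials are partly real or trigonometric — there one checks that the formulas and the dimension count persist verbatim.
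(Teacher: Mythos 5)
Your proposal is correct and follows essentially the same route as the paper: decompose $\varphi=e\alpha+\psi\beta$, reduce $d_\varrho\varphi=0$ to $\beta=-f_y\invers\alpha_y$ plus the scalar equation $\alpha_{yy}-i\alpha_y+\alpha\varrho=0$, solve in $y$ with exponents $\tfrac{i(1\pm t)}2$, pin down the $x$--dependence from $\alpha_y=N\alpha_x$ via the same linear system for $c_0^\pm,c_1^\pm$, and read off multipliers and resonance points from the deck transformation $y\mapsto y+2\pi$. The only point worth stressing is the one you already flag: the one--sided ansatz $e^{iy/2}(c_0^\pm+jc_1^\pm)e^{\pm ity/2}$ does capture the full quaternionic solution space (as one sees by splitting $\alpha=a+jb$ over $\C$), so the eight--real--parameter count closes the argument.
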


From the explicit form of the parallel sections we have now complete 
  information about the set of closed Darboux transforms:
\begin{theorem}
\label{thm: sor}
Let $f: M \to\R^3$ be given by $f(x,y)= \frac 12(ix + je^{-iy})$. Then for $\varrho\in\R_*, \varrho\not=\frac{k^2-1}4, k\in\Z,$  each multiplier
  $h^\pm=-e^{\pm i \pi \sqrt{1+4\varrho}}$ has a complex 2--dimensional space $E_\pm$ of parallel
  sections with multiplier $h^\pm$. Moreover,
\begin{itemize}
\item if $\varrho=-\frac 14$ then there is exactly one closed
  Darboux transform, which is the rotation of $f$ with angle
  $\theta=\pi$ in the $jk$--plane, i.e,  $\hat f(x,y) =
  \frac12(ix-je^{-iy})$ is a cylinder. \\ 

\item 
 if $\varrho<-\frac 14$ then there are 
  exactly two closed Darboux transforms which are the rotations of $f$
  with the angles $\pm\theta$ in the $jk$--plane where $e^{i\theta} =
  -\frac{1+\sqrt{1+4\varrho}}{1-\sqrt{1+4\varrho}}$, i.e, both Darboux transforms 
 are cylinders. \\

\item if $\varrho>-\frac 14, \varrho\not=\frac{k^2-1}4, k\in \Z, k\ge 1,$
  then  there is  a $\CP^1$--worth of closed  Darboux transforms which
  are rotation surfaces.\\

\item if $\varrho=\frac{k^2-1}4, k\in \Z, k>1$,
 then $\varrho$ is a  resonance point. In this case, all  Darboux transforms are
closed and are either rotation surfaces or isothermic
``bubbletons'' with $k$ lobes.
\end{itemize}
\end{theorem}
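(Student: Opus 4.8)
The plan is to extract from Proposition~\ref{prop:parallel} the $d_\varrho$--parallel sections that carry a multiplier, and to read off the surfaces $\hat f=\varphi\H$ they produce. Since $f\colon M\to\R^3$ is the round cylinder, $\pi_1(M)=\Z$ is generated by $\gamma\colon(x,y)\mapsto(x,y+2\pi)$; a $\varrho$--Darboux transform $\hat f=\varphi\H$ is closed exactly when $\varphi$ is a section with multiplier for $\gamma$, and two such sections give the same surface precisely when they differ by a factor in $\H_*$. So the closed $\varrho$--Darboux transforms correspond to the lines $\varphi\H$ inside the space of $d_\varrho$--parallel sections with multiplier, and everything is governed by $t=\sqrt{1+4\varrho}$ together with whether $\varrho$ is a resonance point $\varrho_k=\tfrac{k^2-1}{4}$, $k\in\Z$, $k>1$.

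First I would settle the non--resonance parameters $\varrho\neq-\tfrac14,\varrho_k$. By Proposition~\ref{prop:parallel} every $d_\varrho$--parallel section is $\varphi^++\varphi^-$ with $\varphi^\pm=e\alpha^\pm+\psi\beta^\pm$ of multiplier $h^\pm=-e^{\pm i\pi t}$; since $h^+\neq h^-$ here, $\varphi^++\varphi^-$ has a multiplier iff $\varphi^+=0$ or $\varphi^-=0$, so the sections with multiplier $h^\pm$ are exactly the members of $E_\pm=\{\varphi^\pm\}$, and the fact that $(m_0^\pm,m_1^\pm)\mapsto\varphi^\pm$ is $\C$--linear and injective shows $\dim_\C E_\pm=2$. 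If $\varrho<-\tfrac14$ then $t$ is imaginary and $h^\pm$ are distinct negative reals; since $\varphi j$ has multiplier $\overline h=h$ for $h\in\R$, we get $E_\pm j\subseteq E_\pm$, hence $E_\pm j=E_\pm$, so each $E_\pm$ is a quaternionic line and gives a single surface, i.e.\ there are exactly two closed Darboux transforms. If instead $-\tfrac14<\varrho\neq\varrho_k$, then $t$ is real and non--integral, so $h^\pm=\overline{h^\mp}\notin\{\pm1\}$ are distinct and non--real; then $E_+j$ consists of sections of multiplier $h^-$, so $E_+j\cap E_+=0$ and $E_+\oplus E_+j$ is the whole (quaternionic two--dimensional) space of parallel sections, whence $\varphi_1\H=\varphi_2\H$ for $\varphi_1,\varphi_2\in E_+$ iff $\varphi_1\C=\varphi_2\C$. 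Thus the closed $\varrho$--Darboux transforms are parametrised by $\P(E_+)\cong\CP^1$, the sections in $E_-=E_+j$ producing the same surfaces.

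The parameter $\varrho=-\tfrac14$ is not covered by Proposition~\ref{prop:parallel}, but there the solution of \eqref{eq:differential equation} is $\alpha=e^{iy/2}(c_1+yc_2)$, which has a multiplier iff $c_2=0$; the remaining condition $\alpha_y=N\alpha_x$ then determines $c_1$ up to a factor in $\H$, so there is a single closed Darboux transform, of multiplier $h=-1$. It remains to identify these surfaces. Writing $\varphi=e\alpha+\psi\beta$ one has $\hat f=f+T$ with $T=\alpha\beta\invers$, and since $\varphi^\pm$ carry the same exponential factor $e^{iy/2}$ on the left and $e^{\pm ity/2}$ on the right, for a section with multiplier one finds $T=e^{iy/2}g(x)e^{-iy/2}$ with $g$ a function of $x$ alone. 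Substituting the explicit $\alpha,\beta$ then yields: for $\varrho=-\tfrac14$, $T=-je^{-iy}$ and hence $\hat f=\tfrac12(ix-je^{-iy})$, which is $f$ conjugated by $i$, i.e.\ the rotation of $f$ by $\pi$ in the $jk$--plane and again a cylinder; for $\varrho<-\tfrac14$ the same computation gives the two rotations of $f$ by $\pm\theta$ in the $jk$--plane with $e^{i\theta}=-\tfrac{1+t}{1-t}$, of modulus one since $t$ is imaginary, again cylinders; and for $-\tfrac14<\varrho\neq\varrho_k$ one checks from the formula for $T$ (or invokes Remark~\ref{rem:gensor} and \cite{isothermic_paper}, $f$ being a surface of revolution) that every member of the $\CP^1$ is a rotation surface.

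Finally, at a resonance point $\varrho=\varrho_k$, Proposition~\ref{prop:parallel} gives that \emph{every} $d_{\varrho_k}$--parallel section has the single real multiplier $h_k=(-1)^{k+1}$, so every (possibly singular, cf.\ \cite{conformal_tori}) Darboux transform with parameter $\varrho_k$ is closed, and the full quaternionic two--dimensional space of parallel sections yields an $\HP^1$ of closed Darboux transforms. Classifying these is where the real work lies: with $t=k$ the sections carry the factor $e^{\pm iky/2}$, so $T=\alpha\beta\invers$ takes the form $e^{iy/2}G(x,y)e^{-iy/2}$ with $G$ periodic in $y$ and exhibiting a $k$--fold symmetry, its $x$--dependence built from $c_0^\pm,c_1^\pm$ (which involve $e^{\pm\sqrt{\varrho_k}\,x}$); one then shows that $\varphi^+$ or $\varphi^-$ alone, or suitable rescalings, kill the $y$--dependence and give genuine surfaces of revolution, while a generic $\C$--combination produces an isothermic surface with exactly $k$ equally spaced lobes --- an isothermic bubbleton --- possibly acquiring isolated points where $L(p)=\hat L(p)$. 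The main obstacle throughout is precisely this geometric identification of the Darboux transforms as rotations, rotation surfaces, or $k$--lobe bubbletons from the closed form of $T$; by contrast the dimension counts of the spaces of parallel sections with a prescribed multiplier follow immediately from Proposition~\ref{prop:parallel}.
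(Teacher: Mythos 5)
Your proposal is correct and follows essentially the same route as the paper: it reads off the multiplier eigenspaces $E_\pm$ from Proposition~\ref{prop:parallel}, uses the $\varphi\mapsto\varphi j$ symmetry together with the (non-)reality of $h^\pm=-e^{\pm i\pi t}$ to count the closed Darboux transforms in each regime, and identifies the resulting surfaces from $T=\alpha\beta\invers$. The only notable (cosmetic) difference is your observation that a section with multiplier yields $T=e^{iy/2}g(x)e^{-iy/2}$, which packages the paper's explicit computations of $T_\pm$ and of $i\hat p+je^{-iy}\hat q$ into one conceptual step; the identification of the resonance-point transforms as rotation surfaces or $k$--lobe bubbletons is asserted at the same level of detail in both arguments.
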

\begin{proof}

We first show that the sections $\varphi^\pm$ from Proposition \ref{prop:parallel} give closed (non--singular) Darboux transforms.
If $\varphi^\pm(p)\in\Gamma(L)$ for some $p\in M$ then 
\[
\alpha^\pm(p) = 0
\]
which implies $m_0^\pm = m_1^\pm=0$. Therefore, $\varphi^\pm\not=0$
give Darboux transforms which are not singular and are closed since
$\varphi^\pm$ are sections with multipliers. 

We now observe that each multiplier $h^\pm =-e^{\pm i \pi t}$,
$t=\sqrt{1+4\varrho}$, has a complex 2--dimensional space $E_\pm$ of
parallel sections with multiplier $h^\pm$, parametrised by the pairs
$(m_0^\pm, m_1^\pm)\in\C^2$.

For non--resonance points $\varrho>-\frac 14$ 
the multipliers $h_\pm=-e^{\pm i \pi t} \in S^1\setminus\{\pm 1\}
$ are not real with $h_+
=\overline{h_-}$ and thus $E_+j =
E_-$. Therefore,  we obtain a $\CP^1$--worth of closed Darboux
transforms by
\[
L_+=\varphi_+\H, \qquad \begin{pmatrix} m_0^+ \\ m_1^+
\end{pmatrix}\C 
\in\CP^1\,,
\]
and every  closed Darboux
transform arises this way. Writing  
$\varphi_+ =e\alpha_++\psi \beta_+$ the corresponding Darboux transform $\hat f = f + T$ is
given by our explicit formulae as 
\[
T= \alpha_+\beta_+\invers = i \hat p + j e^{-iy} \hat q
\]
where $\hat p$ (resp. $\hat q$) is complex--valued (resp. real--valued)
function in $x$. Thus,  every closed Darboux transform  is a rotation
surface for non--resonance points $\varrho>-\frac 14$.

 In the case when
$\varrho< -\frac 14$ the two parallel sections $\varphi_\pm$ have
real multipliers $h_\pm\in\R$ and the eigenspaces of the multipliers
$h_\pm$ are quaternionic. Therefore, in this case $\varphi_+\H$ and
$\varphi_-\H$ give two closed Darboux transforms $f_\pm=f +
T_\pm$. Our explicit expressions give
\[
T_\pm = \alpha_\pm\beta_\pm \invers =- j \frac 1{1\mp t} e^{iy}
\]
and both surfaces $ f_\pm  = f + T_\pm = \frac 12(ix + j e^{\pm i\theta}
e^{-iy})$ are  cylinders where $e^{i\theta} =-\frac {1+t}{1-t} \in
S^1$ since $t\in i\R$.

In the case when $\varrho=-\frac 14$ we have real multiplier $h_+
=h_-=-1$ and $\varphi_+\H = \varphi_-\H$ gives one closed Darboux
transform. Since there is no other section with multiplier, there are
no other closed Darboux transforms in this case. The same computation
as in the case $\varrho<-\frac 14$ shows that the surface is a
cylinder (with $t= \sqrt{1+4\rho} =0$).

Finally, if $\varrho=\frac{k^2-1}4, k\in \Z, k>1$, is a resonance point
then $h_+ = h_-\in\R$ and every parallel section is a section with
multiplier.  The closed Darboux transforms given by
$L_\pm =\varphi_\pm\H$ are non--singular and give rotation
surfaces. The closed Darboux transforms with
$\varphi = \varphi_+ + \varphi_-$, $\varphi_\pm\not=0$,  give
isothermic bubbletons which may be singular Darboux transforms.
\end{proof}

Examples of all possible  types of closed Darboux transforms in
3--space of a cylinder can be seen in the following figures:

\begin{figure}[H]
	\centering
	\begin{minipage}{0.48\textwidth}
		\centering
		\includegraphics[width=\linewidth]{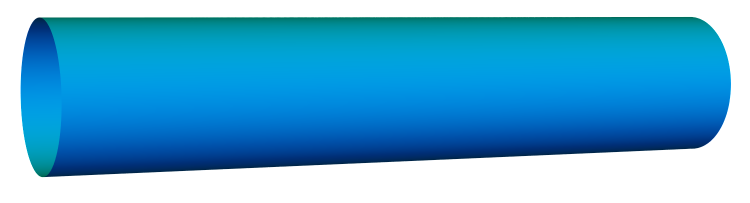}
	\end{minipage}
	\begin{minipage}{0.48\textwidth}
		\centering
		\includegraphics[width=\linewidth]{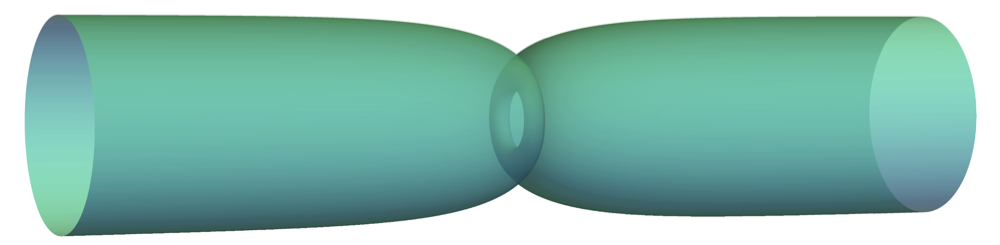}
	\end{minipage}
	\caption{At non--resonance
          points all Darboux transforms are cylinders or more general rotation surfaces.}
\label{fig:cylinder non-resonance}
\end{figure} 

\begin{figure}
  	\centering
  	\begin{minipage}{0.4\textwidth}
		\centering
		\includegraphics[width=\linewidth]{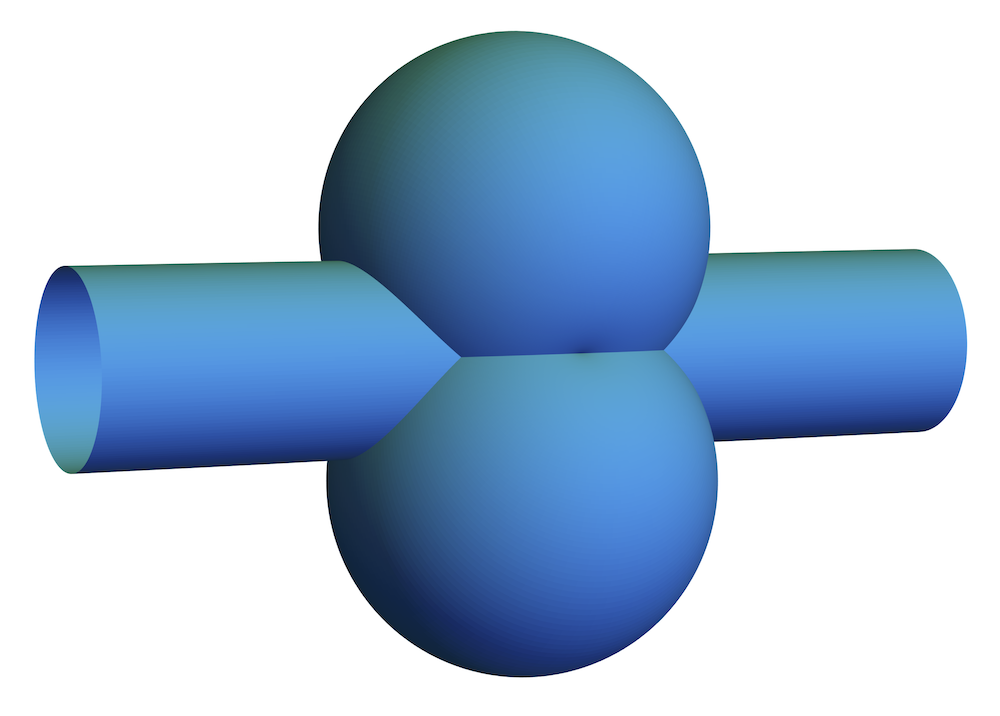}
	\end{minipage}
  	\begin{minipage}{0.4\textwidth}
		\centering
		\includegraphics[width=\linewidth]{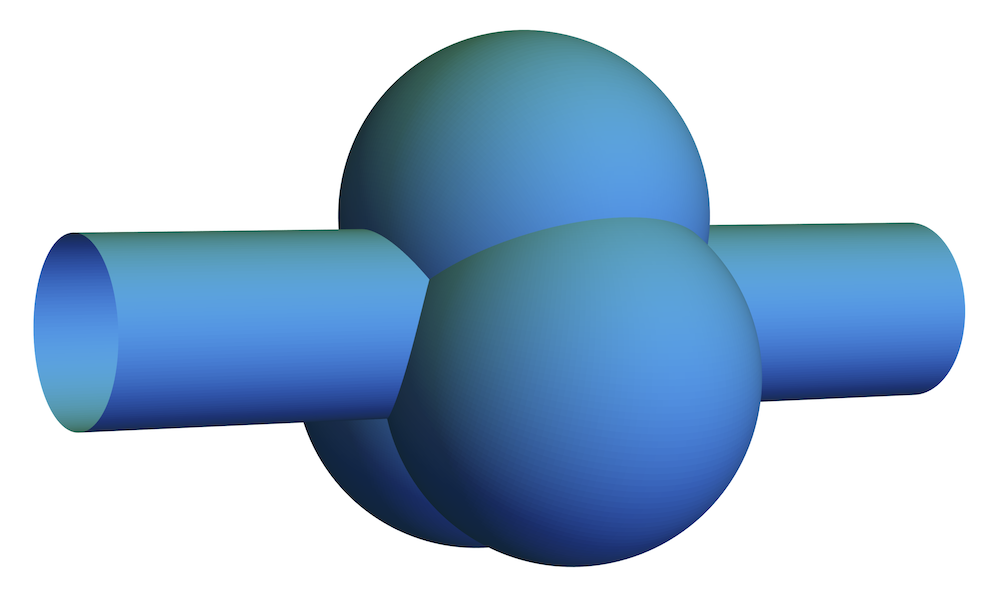}
	\end{minipage}
  	\begin{minipage}{0.4\textwidth}
		\centering
		\includegraphics[width=\linewidth]{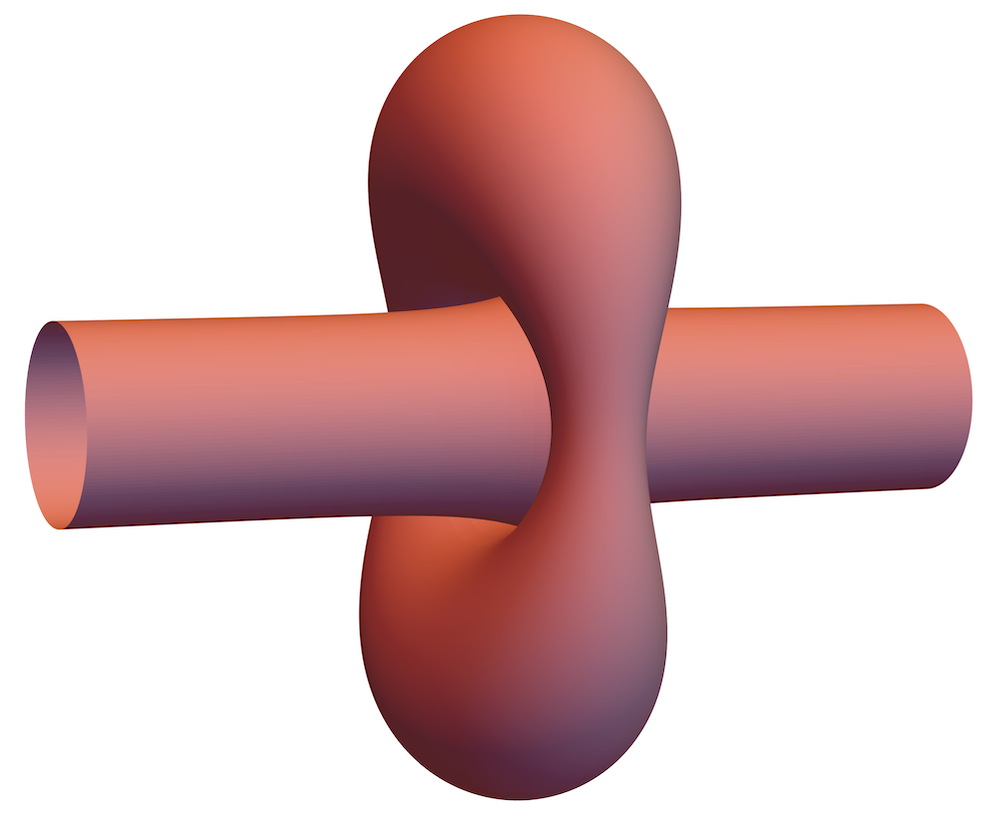}
	\end{minipage}
  	\begin{minipage}{0.4\textwidth}
		\centering
		\includegraphics[width=\linewidth]{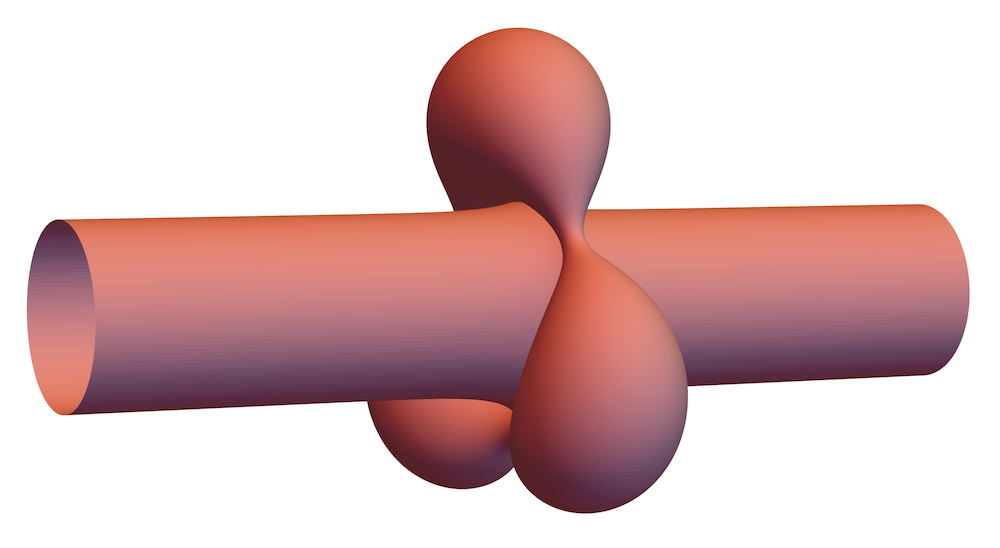}
 	\end{minipage}
        \caption{At a
          resonance  point $\varrho_k = \frac{k^2-1}4$ additionally CMC bubbletons or isothermic
          bubbletons can occur, here for
          $k=2,3$ lobes.}
\label{fig:bubbletons}
\end{figure} 

\begin{rem}
\label{rem:gensor} We should note that similar arguments as in Proposition
  \ref{prop:parallel} and Theorem \ref{thm: sor} allow to investigate
  parallel sections with multiplier and Darboux transforms of surfaces
  of revolution, see \cite{isothermic_paper}. Although in general, the
  differential equations for $c^\pm$ cannot be solved explicitly, the
  corresponding shape of the functions $\alpha, \beta$ is still enough
  to find all possible multipliers and to conclude that all Darboux
  transforms are surfaces of revolution.
\end{rem}

\subsection{Sym--type Darboux transforms of a cylinder}
\label{app:sym-cyl}
Since now  all parallel sections of $d_\varrho$ are known, we can compute explicit
examples of Sym--type Darboux transforms.

 We will consider the case when the one--step  Darboux transform of the
 cylinder is  a surface of revolution but not CMC. Otherwise,
 the Darboux transform is again a cylinder, and all of its
 Darboux transforms are already known, or an (isothermic) bubbleton which
 has the original cylinder $f$ as its only closed Darboux
 transform. 

 We will fix our spectral parameter as the resonance point
 $\varrho=\frac 34$ and choose, according to Proposition \ref{prop:parallel},
 the parameter {$m_0^+=m_1^+=1$} and $m_0^-=m_1^-=0$. Then the
 $d_\varrho$--parallel section is given by $\varphi = e\alpha+\psi \beta$ with
\begin{align*}
\alpha =\alpha^+ &=2 e^{\frac {iy}2}(-i \sqrt 3 \sinh\frac{\sqrt 3x}2+3j  \cosh\frac{\sqrt 3x}2)e^{iy}  \\
\beta =\beta^+ &= 
6e^{\frac {iy}2}\left(\cosh\frac{\sqrt 3x}2 -j i \sqrt 3 \sinh\frac{\sqrt 3x}2\right)
            e^{iy}\,.
\end{align*}

The resulting Darboux transform 
\begin{equation}
\label{eq:firstsor}
\hat f = f+
\alpha\beta\invers=  i \hat p + j \hat q e^{-iy}
\end{equation}
is a surface of revolution in 3--space
where
\begin{align*}
\hat p(x) &= \frac{x}{2}+\frac{2 \sqrt{3} \sinh(\sqrt{3}
         x)}{3-6 \cosh(\sqrt{3} x)}\\
\hat q(x)& = \frac{1}{2 \cosh(\sqrt{3} x)-1}+\frac{1}{2}\,.
\end{align*}
\begin{figure}[H]
  	\centering
  	\begin{minipage}{0.32\textwidth}
		\centering
		\includegraphics[width=\linewidth]{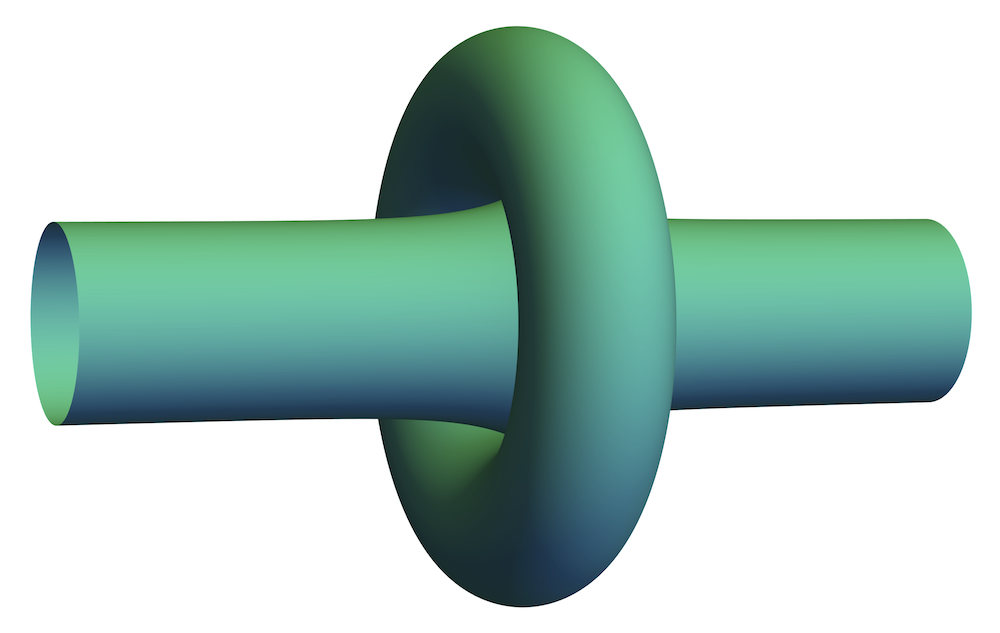}
	\end{minipage}
        \caption{One--step Darboux transform $\hat f$ in 3--space.}
\label{fig:green}
\end{figure} 

In particular, $\hat f$ is real--analytic and we see by Theorem
\ref{thm:cmcorsor} that a Darboux transform $\doublehat{f}$ of
$\hat f$ can only have constant mean curvature if $\doublehat{ f}$ is
a surface of revolution.

We now demonstrate in two examples how to explicitly construct Sym--type
Darboux transforms of $f$. The first one is obtained by extending $\varphi$ near
$\lambda=\varrho$ to $d_\lambda$--parallel sections
$\varphi^\lambda$. Here $\varphi$ is the section which gives the above
Darboux transform $\hat f$.  To obtain the Sym--type parallel section
we then compute
\[
\hat \varphi =\varphi - \pi  \phid \varrho
\]
where $\pi$ is the projection along the splitting $\trivial 2 =
L\oplus \hat L$ , $\hat L =\varphi\H$.

\begin{example}[Sym--type Darboux transform is a surface of revolution]
We  choose
$\varphi^\lambda = e\alpha^\lambda+\psi \beta^\lambda$ where
\begin{align*}
\alpha^\lambda &= e^{\frac {iy}2}(c_0+jc_1)e^{\frac{i t y} 2}  \\
\beta^\lambda&= 
e^{\frac {iy}2}\left(c_1^\pm(t-1) + j c_0^\pm(1+ t)\right)
            e^{\frac{i t y} 2}
\end{align*}
with $t=\sqrt{1+ 4\lambda}$ and
\begin{align*}
c_0^\lambda(x) & = -4i\sqrt \lambda \sinh(\sqrt \lambda x) \\
c_1^\lambda(x) & = 2(1+ t) \cosh(\sqrt \lambda x)\,,
\end{align*}
so that indeed $\varphi^{\lambda=\varrho} = \varphi$.
Abbreviating the
$\lambda$--derivative evaluated at $\varrho$ by a dot,  we have
\[
\dot\varphi =\phid= 
e\dot\alpha + \psi \dot\beta\,.
\]
We compute
\begin{align*}
\dot c_0 &= - 2i\left(\frac{2\sqrt{3}}3 \sinh(\frac{\sqrt{3}x}2 ) + x
           \cosh(\frac{\sqrt{3}x}2 ) \right) \\
\dot c_1& = 2\left(\cos(\frac{\sqrt{3}x}2 ) + \sqrt 3
          x\sinh(\frac{\sqrt{3}x}2 )\right )
\end{align*}
and thus

\begin{align*}
\dot \alpha &=-\frac{i  e^{\frac{3 i y}{2}}}{3} \left(6 x \cosh
                (\frac{\sqrt{3} x}{2})+\sqrt{3} (4+3 i y)
                \sinh (\frac{\sqrt{3} x}{2})\right)  \\
&\qquad \qquad + \frac j2 e^{\frac{1}{2} (-\sqrt{3} x+i y)}
               \left(e^{\sqrt{3} x} \left(2 \sqrt{3} x+3 i y+2\right)-2 \sqrt{3} x+3 i y+2\right)\\
\dot \beta & = \frac{1}2 e^{\frac{1}{2} (-\sqrt{3} x+3 i y)}
               \left(e^{\sqrt{3} x} \left(2 \sqrt{3} x+3 i
               y+8\right)-2 \sqrt{3} x+3 i y+8\right)\\
            &\qquad \qquad -3 j i e^{\frac{i y}{2}} \left(2 x \cosh (\frac{\sqrt{3} x}{2})+\sqrt{3} (2+i y) \sinh (\frac{\sqrt{3} x}{2})\right)\,.
\end{align*}

 Since
$e=\varphi\alpha\invers-\psi\beta\alpha\invers$ we obtain $\pi
\dot\varphi = \psi(\dot \beta-\beta\alpha\invers\dot \alpha)$ so that
\[
\hat\varphi = \varphi - \pi \dot \varphi \varrho = e\alpha + \psi\beta(1+ m)
\]
with 
\begin{align*}
m &=(\alpha\invers \dot\alpha - \beta\invers\dot\beta)\varrho \\
& =-\tfrac{1}{4} \left(\tfrac{4 \sqrt{3} x \sinh (\sqrt{3} x)+3
    \cosh (\sqrt{3} x)}{2 \cosh (2 \sqrt{3}
      x)+1}+2+j i \tfrac{e^{2 i y} \left(\sqrt{3} \sinh (\sqrt{3} x)-12 x \cosh (\sqrt{3} x)\right)}{2 \cosh (2 \sqrt{3} x)+1}\right)  \,.
\end{align*}
Thus,
using $(1+m)\invers= 1- m(1+m)\invers$ we obtain
\[
\doublehat f= f + \alpha(1+m)\invers \beta\invers = f + \alpha\beta\invers -\alpha m(1+m)\invers
\beta\invers= \hat f -\alpha m(1+m)\invers
\beta\invers\,,
\]

which gives $\doublehat f = \hat f + \hat T$ with 
\[
\hat T = 
	\begin{multlined}[t]
		\tfrac{2 i \left(\sqrt{3} \sinh (\sqrt{3} x)
		    \left(48 x^2-8 \cosh (2 \sqrt{3} x)-7\right)+72 x \cosh
		   (\sqrt{3} x)\right)}{3 \left(2 \cosh (\sqrt{3}
		      x)-1\right) \left(48 x^2-16 \sqrt{3} x \sinh
		    (\sqrt{3} x)-12 \cosh(\sqrt{3} x)+8 \cosh
		    (2 \sqrt{3} x)+7\right)} \\
		+ j\tfrac{e^{-i y} \left(-48 x^2+16 \sqrt{3} x \sinh (2 \sqrt{3} x)+4 \cosh (2 \sqrt{3} x)+5\right)}{\left(2 \cosh (\sqrt{3} x)-1\right) \left(48 x^2-16 \sqrt{3} x \sinh (\sqrt{3} x)-12 \cosh (\sqrt{3} x)+8 \cosh (2 \sqrt{3} x)+7\right)}
	\end{multlined}
\]
In particular, $\doublehat{f}$ is  again a surface of revolution in
3-space. 

\begin{figure}[H]
	\centering
	\begin{minipage}{0.4\textwidth}
		\centering
		\includegraphics[width=0.8\linewidth]{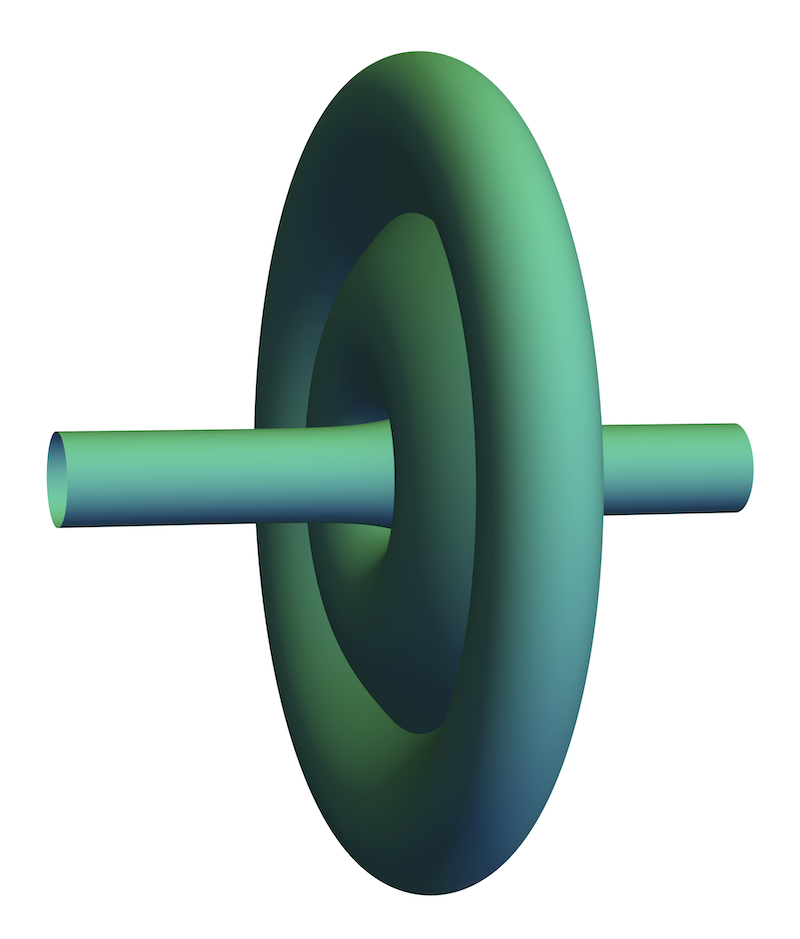}
	\end{minipage}
	\caption{Sym--type Darboux transform of the round cylinder $f$.}
\label{fig:sym-sor2}
\end{figure}
Since $\doublehat{f}$ is not a Delaunay
surface, we see that $\doublehat{f}$ is isothermic but not CMC.
\end{example}

We now compute another Sym--type Darboux transform of the cylinder by
using Theorem
\ref{thm:all parallel sections}:
all Darboux transforms $\doublehat f$ of $\hat f$ are given by parallel sections 
which are quaternionic  linear combinations 
of $\hat\varphi$ and of $\hat \varphi_2 =\pi \varphi_2$, where $\pi$
is the projection to $L$ along the splitting
$L\oplus \hat L$, $\hat L =\varphi\H$, and $\varphi_2$ is a 
$d_\varrho$--parallel section $\varphi_2$ which is $\H$--independent
of $\varphi$.

Note that for the resonance point $\varrho=\frac 34$ all Darboux
transforms obtained this way are closed surfaces. Moreover, if
$\doublehat f \not=f$ then $\doublehat f$ is a Sym--type Darboux transform
of $f$: recall that by Theorem \ref{thm: generalised Bianchi} a two--step
Darboux transform is either Sym--type or Bianchi type; in the latter
case, it is the original cylinder $\doublehat f =f$ whereas in the former
$\doublehat{f} \not=f$.

\begin{example}[Closed Sym--type Darboux transform is not a surface of revolution]
  Let $c_0^2 = c_0^+(x,i,-i)$ and consider the corresponding parallel
  section $\tilde \varphi$ which is quaternionic independent of
  $\varphi$ by construction.  To obtain a CMC bubbleton, see
  \cite{isothermic_paper}, we put
  $\varphi^2= \varphi + \tilde \varphi j = e\alpha^2 + \psi \beta^2$
  with
\begin{align*}
\alpha^2&=  -2 i e^{-\frac{i y}2} \left(-3+\sqrt{3} e^{2
          i y}\right) \sinh (\frac{\sqrt{3} x}{2})+ 2 j
          e^{-\frac{3 i y}2} \left(\sqrt{3}+3 e^{2 i y}\right)
          \cosh (\frac{\sqrt{3} x}{2}) \\
\beta^2 &= 6 e^{-\frac{i y}2} \left(-\sqrt{3}+e^{2 i
          y}\right) \cosh(\frac{\sqrt{3} x}{2})-6 j i
          e^{-\frac{3 i y}2} \left(1+\sqrt{3} e^{2 i y}\right)
          \sinh (\frac{\sqrt{3} x}{2})\,. 
\end{align*}

The resulting Darboux transform $f_2$ of $f$ can be explicitly computed as 
\[
f_2(x,y)=\begin{pmatrix}\frac{x}{2}+\frac{2 \sinh \left(\sqrt{3} x\right)}{3
    \cos (2 y)-2 \sqrt{3} \cosh \left(\sqrt{3}
      x\right)}\\
\frac{\cos
    (y)}{2}+\frac{3 \cos (y)-\cos (3 y)}{6 \cos (2
    y)-4 \sqrt{3} \cosh \left(\sqrt{3} x\right)}\\
\frac{\sin y}{2} + \frac{\frac 12 \sin y}{\frac{\sqrt{3} \cosh
        \left(\sqrt{3} x\right)+3}{\cos (2 y)+2}-\frac{3}{2}}
\end{pmatrix}\,,
\]
and is indeed a CMC bubbleton.
\begin{figure}[H]
  	\centering
  	\begin{minipage}{0.4\textwidth}
		\centering
		\includegraphics[width=\linewidth]{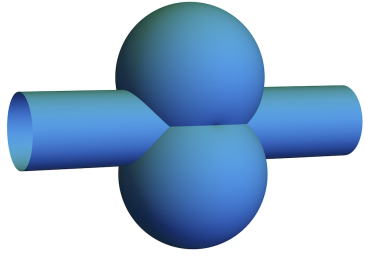}
	\end{minipage}
        \caption{CMC bubbleton $f_2$.}
\label{fig:bubbletons2}
\end{figure} 
To obtain a surface in 3-space from  
linear combinations of the two parallel sections $\hat \varphi$ and
$\hat\varphi_2 = \pi \varphi_2$,  we need to satisfy an initial
condition:  if we use 
\[
\hat\varphi + \hat
\varphi_2  i r
\]
where $r\in\R$ is a free parameter, the resulting Darboux transforms
$\doublehat{f}: M \to\R^3$ of $\hat f : M \to\R^3$ are surfaces in
3--space and Sym--type Darboux transforms of $f$ since
\[
\hat \pi(\hat\varphi + \hat
\varphi_2  i r) =\hat\varphi \not=0\,,
\]
that is $\doublehat{f} \not=f$.

The resulting Darboux transforms of $\hat f$ can be
computed explicitly.
For example, for $r=50$ we obtain 
$\doublehat {f} = \hat f + \hat T$ with
$\hat T = (\hat T_1, \hat T_2, \hat T_3)$ where
\begin{align*}
\hat T_1&= 
	\begin{multlined}[t]
		\frac 2d \left(2 \cosh (2 \sqrt{3} x)+1\right) \\
		\left(\sqrt{3} \sinh (\sqrt{3} x) (48 x^2-8 \cosh(2 \sqrt{3} x)+639993)+72 x \cosh(\sqrt{3} x)\right)
	\end{multlined}\\
\hat T_2 &= 
	\begin{multlined}[t]
		\frac 1d
			\left(4 \cosh^2(\sqrt{3} x)-1\right)
			\left(- 3 A \cos y -3200 \sqrt{3} (2 \cosh (2 \sqrt{3} x)+1) \sin^3 y \right)
	\end{multlined}\\
\hat T_3&=
	\begin{multlined}[t]
		-\frac 1d\left(2 \cosh (2 \sqrt{3} x)+1\right) \left(3 A \sin y + 2400 \sqrt{3} (2 \cosh (2 \sqrt{3} x)+1) \cos y \right.\\
			+ \left. 800 \sqrt{3} (2 \cosh (2 \sqrt{3} x)+1) \cos (3 y)\right)
  	\end{multlined}
\end{align*}
where
	\[
		A = 48 x^2 - 16 \sqrt{3} x \sinh(2 \sqrt{3} x) - 4 \cosh (2 \sqrt{3} x) + 639995
	\]
	
and
\[
d= \begin{multlined}[t]
	3 \big(1-2 \cosh (\sqrt{3} x)\big)^2
		\big(2 \cosh(\sqrt{3} x)+1\big) \\
	\left(48 x^2
		-16 \sqrt{3} x \sinh (\sqrt{3} x)
		-12 \cosh (\sqrt{3} x)
		+ 8 \cosh (2 \sqrt{3} x)\right. \\
	\left.+1600 \sqrt{3} \big(1-2 \cosh (\sqrt{3} x)\big) \sin (2 y)
		+ 640007\right)\,.
    \end{multlined}
\]

\begin{figure}[H]
	\centering
	\begin{minipage}{0.45\textwidth}
		\centering
		\includegraphics[width=\linewidth]{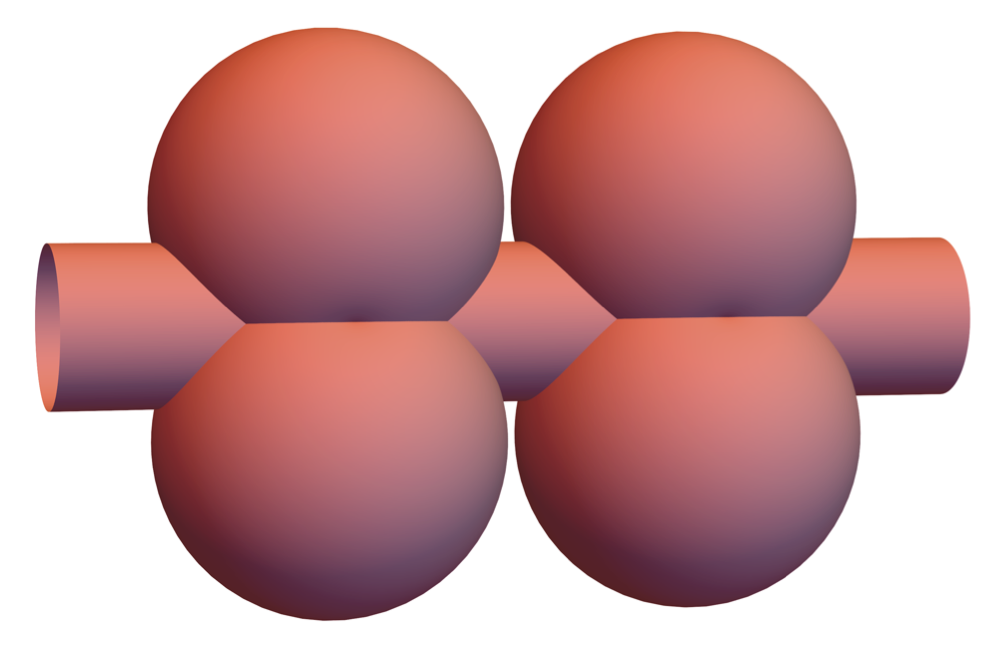}
	\end{minipage}
\caption{Sym--type Darboux transform of $f$.}
\label{fig: cmc similar2}
\end{figure}
 
Despite the Sym--type Darboux transform $\doublehat f$ having a
similar shape to CMC bubbletons, the surface does not have constant
mean curvature: for a Darboux transform $\doublehat f$ of the surface
of revolution $\hat f$ to have constant mean curvature, $\doublehat f$
must be a surface of revolution.
\end{example}

Similarly, one can obtain other Sym--type Darboux transforms
explicitly where
$k$ gives the number of lobes:
\begin{figure}[H]
	\centering
	\begin{minipage}{0.45\textwidth}
		\centering
		\includegraphics[width=\linewidth]{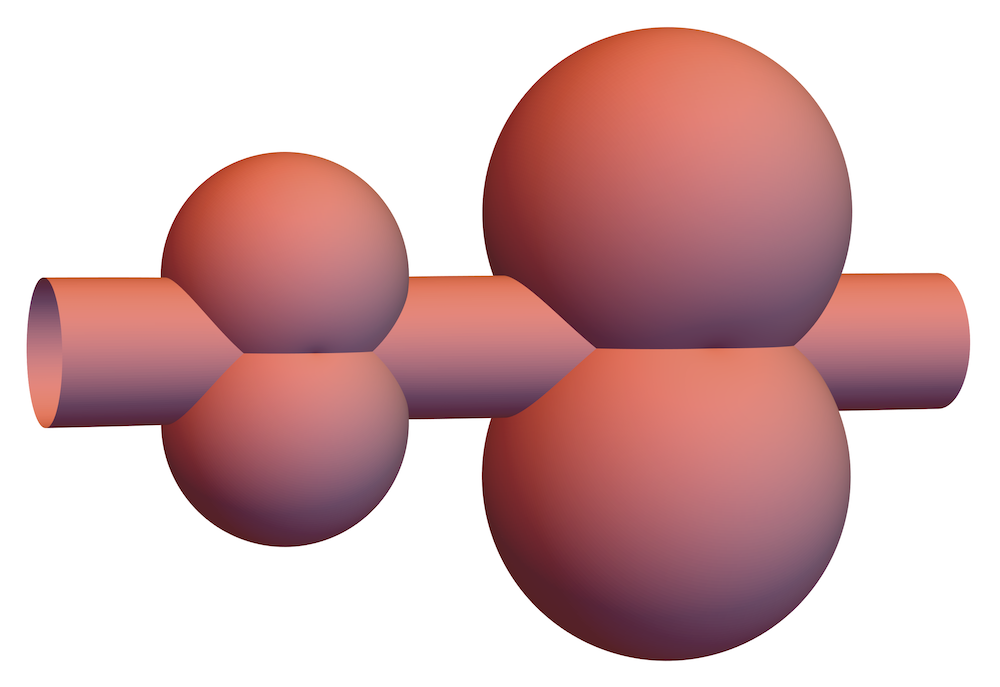}
	\end{minipage}
	\begin{minipage}{0.45\textwidth}
		\centering
		\includegraphics[width=\linewidth]{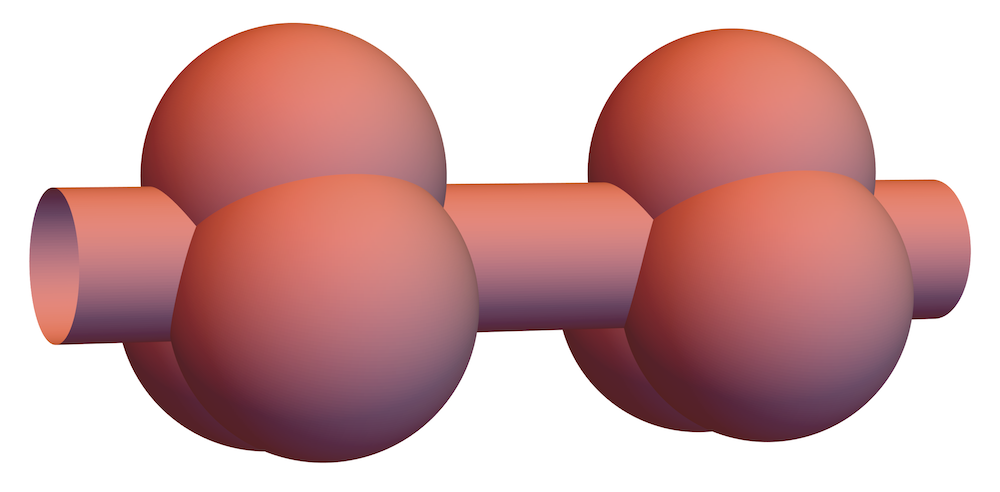}
	\end{minipage}
	\begin{minipage}{0.45\textwidth}
		\centering
		\includegraphics[width=\linewidth]{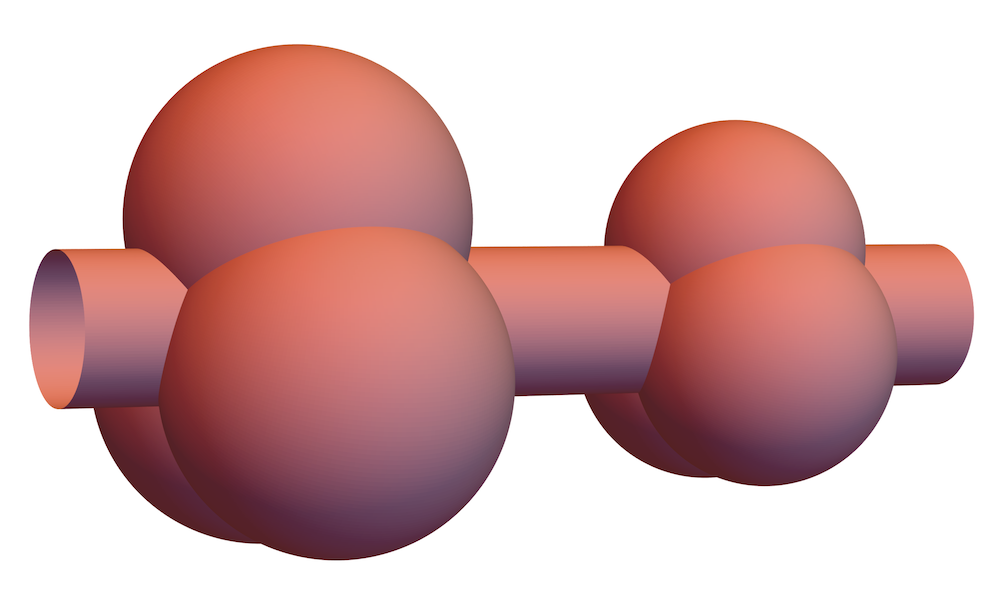}
	\end{minipage}

\caption{Sym--type Darboux transforms of a cylinder at resonance
  points $\varrho_k = \frac{k^2-1}4, k=2,3$.}
\label{fig: cmc similar3}
\end{figure}

To conclude this section we observe that we also obtain all closed
Darboux transform of higher order of the cylinder $f$  by information on the multipliers of
parallel sections of the associated family $d_\lambda$ of $f$, without
further integration. 

\begin{figure}[H]
	\centering
	\begin{minipage}{0.45\textwidth}
		\centering
		\includegraphics[width=\linewidth]{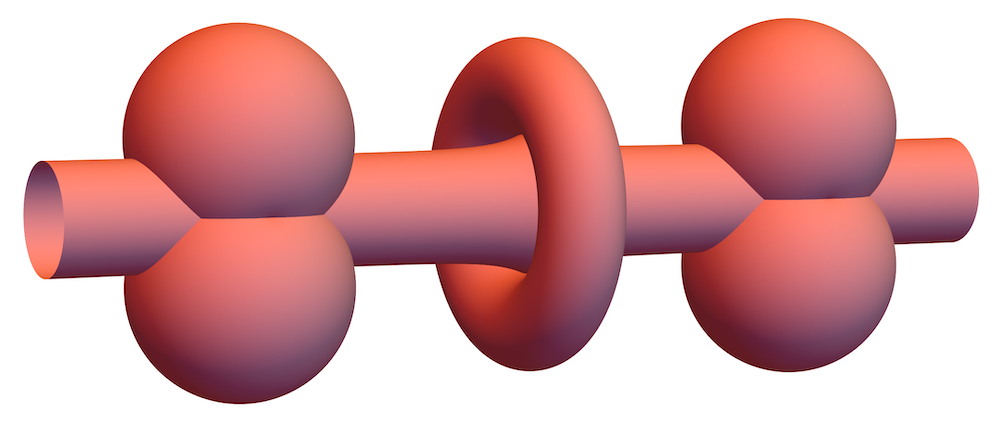}
	\end{minipage}	
	\begin{minipage}{0.45\textwidth}
		\centering
		\includegraphics[width=\linewidth]{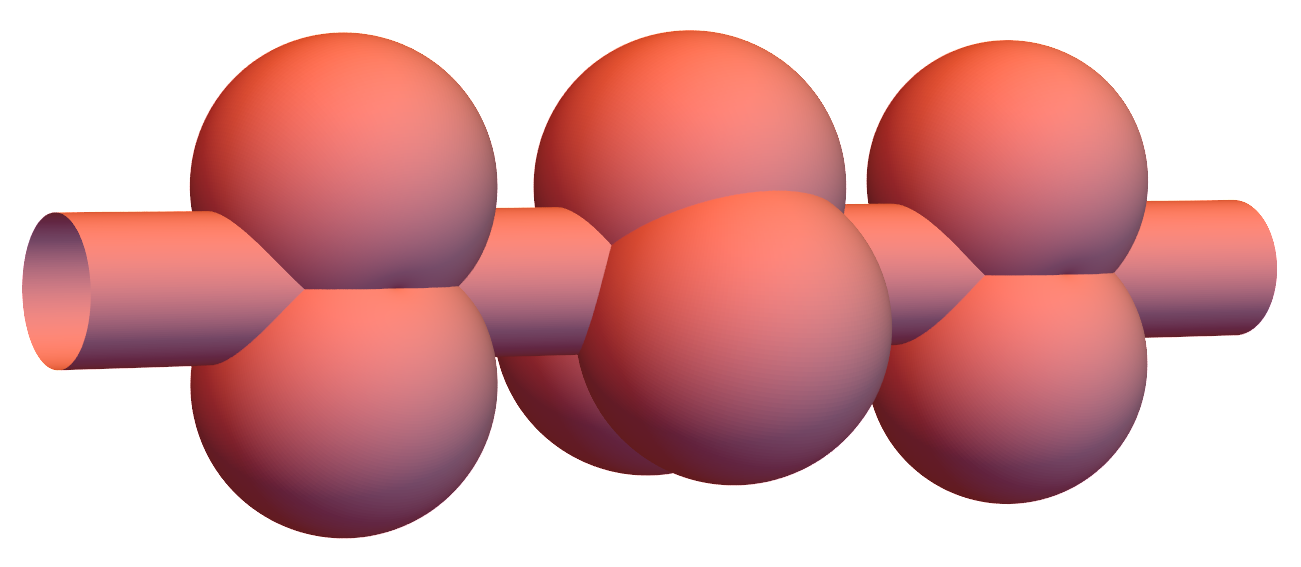}
	\end{minipage}
\caption{Triple Darboux transforms at a resonance point: the first one
  is obtained as a Darboux transform of a Sym--type two--step transform
  surface of revolution at the resonance point $\varrho_2$, whereas the second one
  is obtained by Bianchi permutability from a non--rotational Sym--type
  Darboux transform, using the two different resonance points $\varrho_2, \varrho_3$.}
\end{figure}

\vspace{15pt}
\textbf{Data availability.} Data sharing not applicable to this article as no datasets were generated or analysed during the current study.
%
%
%
%

\begin{bibdiv}
\begin{biblist}

\bib{bianchi_ricerche_1905}{article}{
      author={Bianchi, Luigi},
       title={Ricerche sulle superficie isoterme e sulla deformazione delle
  quadriche},
        date={1905},
     journal={Ann. Mat. Pura Appl. (3)},
      volume={11},
      number={1},
       pages={93\ndash 157},
         doi = {10.1007/BF02419963},
}

\bib{conformal_tori}{article}{
      author={Bohle, Christoph},
      author={Leschke, Katrin},
      author={Pedit, Franz},
      author={Pinkall, Ulrich},
       title={Conformal maps from a 2-torus to the 4-sphere},
        date={2012},
     journal={J. Reine Angew. Math.},
      volume={671},
       pages={1\ndash 30},
      review={\MR{2983195}},
        doi = {10.1515/CRELLE.2011.156},
}

\bib{bour_theorie_1862}{article}{
      author={Bour, Edmond},
       title={Th\'eorie de la d\'eformation des surfaces},
        date={1862},
     journal={J. \'Ec. Polytech.},
      volume={39},
       pages={1\ndash 148},
}

\bib{fran_epos}{incollection}{
      author={Burstall, Francis~E.},
       title={Isothermic surfaces: conformal geometry, {{Clifford}} algebras
  and integrable systems},
        date={2006},
   booktitle={Integrable systems, geometry, and topology},
      editor={Terng, Chuu-Lian},
      series={{{AMS}}/{{IP Stud}}. {{Adv}}. {{Math}}.},
      volume={36},
   publisher={{Amer. Math. Soc.}},
     address={{Providence, RI}},
       pages={1\ndash 82},
      review={\MR{2222512}},
      doi={10.1090/amsip/036/01},
}

\bib{burstall_conformal_2010}{article}{
      author={Burstall, Francis~E.},
      author={Calderbank, David M.~J.},
       title={Conformal submanifold geometry {{I}}-{{III}}},
        date={2010},
      eprint={1006.5700},
      url={https://arxiv.org/abs/1006.5700},
}

\bib{burstall_isothermic_2011}{article}{
      author={Burstall, Francis~E.},
      author={Donaldson, Neil~M.},
      author={Pedit, Franz},
      author={Pinkall, Ulrich},
       title={Isothermic submanifolds of symmetric ${R}$-spaces},
        date={2011},
     journal={J. Reine Angew. Math.},
      volume={660},
       pages={191–243},
      review={\MR{2855825}},
        doi = {10.1515/crelle.2011.075},
}

\bib{simple_factor_dressing}{article}{
      author={Burstall, Francis~E.},
      author={Dorfmeister, Josef},
      author={Leschke, Katrin},
      author={Quintino, A.~C.},
       title={Darboux transforms and simple factor dressing of constant mean
  curvature surfaces},
        date={2013},
     journal={Manuscripta Math.},
      volume={140},
      number={1-2},
       pages={213\ndash 236},
      review={\MR{3016491}},
        doi = {10.1007/s00229-012-0537-2},
}

\bib{coimbra}{book}{
      author={Burstall, Francis~E.},
      author={Ferus, Dirk},
      author={Leschke, Katrin},
      author={Pedit, Franz},
      author={Pinkall, Ulrich},
       title={Conformal geometry of surfaces in ${S}^4$ and quaternions},
      series={Lecture Notes in Mathematics},
   publisher={Springer-Verlag},
     address={Berlin},
        date={2002},
      volume={1772},
        ISBN={978-3-540-43008-7},
      review={\MR{1887131}},
        doi = {10.1007/b82935},
}

\bib{bjpp}{article}{
      author={Burstall, Francis~E.},
      author={{Hertrich-Jeromin}, Udo},
      author={Pedit, Franz},
      author={Pinkall, Ulrich},
       title={Curved flats and isothermic surfaces},
        date={1997},
     journal={Math. Z.},
      volume={225},
      number={2},
       pages={199\ndash 209},
      review={\MR{1464926}},
        doi = {10.1007/PL00004308},
}

\bib{cieslinski1995isothermic}{article}{
      author={Cieśliński, Jan},
      author={Goldstein, Piotr},
      author={Sym, Antoni},
       title={Isothermic surfaces in $\mathbf {E}^3$ as soliton surfaces},
        date={1995},
     journal={Phys. Lett. A},
      volume={205},
      number={1},
       pages={37–43},
      review={\MR{1352426}},
        doi = {10.1016/0375-9601(95)00504-V},
}

\bib{darboux}{article}{
      author={Darboux, Gaston},
       title={Sur les surfaces isothermiques},
        date={1899},
     journal={C. R. Acad. Sci. Paris},
      volume={128},
       pages={1299\ndash 1305},
}

\bib{demoulin_sur_1910}{article}{
      author={Demoulin, Alphonse},
       title={Sur les systèmes et les congruences {$K$}},
        date={1910},
     journal={C. R. Acad. Sci. Paris},
      volume={150},
       pages={156–159},
}

\bib{klassiker}{article}{
      author={Ferus, Dirk},
      author={Leschke, Katrin},
      author={Pedit, Franz},
      author={Pinkall, Ulrich},
       title={Quaternionic holomorphic geometry: Plücker formula, {Dirac}
  eigenvalue estimates and energy estimates of harmonic $2$-tori},
        date={2001},
     journal={Invent. Math.},
      volume={146},
      number={3},
       pages={507–593},
      review={\MR{1869849}},
        doi = {10.1007/s002220100173},
}

\bib{ferus_curved_1996}{article}{
      author={Ferus, Dirk},
      author={Pedit, Franz},
       title={Curved flats in symmetric spaces},
        date={1996},
     journal={Manuscripta Math.},
      volume={91},
      number={4},
       pages={445\ndash 454},
      review={\MR{1421284}},
        doi = {10.1007/BF02567965},
}

\bib{hertrich-jeromin_supplement_1997}{article}{
      author={{Hertrich-Jeromin}, Udo},
       title={Supplement on curved flats in the space of point pairs and
  isothermic surfaces: a quaternionic calculus},
        date={1997},
     journal={Doc. Math.},
      volume={2},
       pages={335\ndash 350},
      review={\MR{1487468}},
}

\bib{udo_habil}{book}{
      author={{Hertrich-Jeromin}, Udo},
       title={Introduction to {{M\"obius}} differential geometry},
      series={London {{Mathematical Society Lecture Note Series}}},
   publisher={{Cambridge University Press}},
     address={{Cambridge}},
        date={2003},
      volume={300},
      review={\MR{2004958}},
}

\bib{hertrich-jeromin_mobius_2001}{article}{
      author={{Hertrich-Jeromin}, Udo},
      author={Musso, Emilio},
      author={Nicolodi, Lorenzo},
       title={M\"obius geometry of surfaces of constant mean curvature 1 in
  hyperbolic space},
        date={2001},
     journal={Ann. Global Anal. Geom.},
      volume={19},
      number={2},
       pages={185\ndash 205},
      review={\MR{1826401}},
        doi = {10.1023/A:1010738712475},
}

\bib{darboux_isothermic}{article}{
      author={{Hertrich-Jeromin}, Udo},
      author={Pedit, Franz},
       title={Remarks on the {{Darboux}} transform of isothermic surfaces},
        date={1997},
     journal={Doc. Math.},
      volume={2},
       pages={313\ndash 333},
      review={\MR{1487467}},
}

\bib{KamPedPin}{article}{
      author={Kamberov, George},
      author={Pedit, Franz},
      author={Pinkall, Ulrich},
       title={Bonnet pairs and isothermic surfaces},
        date={1998},
     journal={Duke Math. J.},
      volume={92},
      number={3},
       pages={637\ndash 644},
         doi = {10.1215/S0012-7094-98-09219-5},
         review={\MR{1620534}},
}

\bib{isothermic_paper}{unpublished}{
      author={Leschke, Katrin},
       title={Links between the integrable systems of {{CMC}} surfaces,
  isothermic surfaces and constrained {{Willmore}} surfaces},
        note={In progress},
}

\bib{habil}{thesis}{
      author={Leschke, Katrin},
       title={Transformations on {{Willmore}} surfaces},
        type={Habilitationsschrift},
        date={2006},
        organization={Universit\"at Augsburg},
}

\bib{sym_soliton_1985}{incollection}{
      author={Sym, Antoni},
       title={Soliton surfaces and their applications (soliton geometry from
  spectral problems)},
        date={1985},
   booktitle={Geometric aspects of the {{Einstein}} equations and integrable
  systems ({{Scheveningen}}, 1984)},
      editor={Martini, R.},
      series={Lecture {{Notes}} in {{Phys}}.},
      volume={239},
   publisher={{Springer}},
     address={{Berlin}},
       pages={154\ndash 231},
      review={\MR{828048}},
        doi = {10.1007/3-540-16039-6_6},
}

\end{biblist}
\end{bibdiv}


\end{document}